\newtheorem{theorem}{Theorem}
\theoremstyle{plain}
\newtheorem{corollary}{Corollary}[section]
\newtheorem{definition}{Definition}[section]
\newtheorem{example}{Example}[section]
\newtheorem{lemma}{Lemma}[section]
\newtheorem{problem}{Problem}[section]
\newtheorem{proposition}{Proposition}[section]
\newtheorem{remark}{Remark}[section]
\numberwithin{equation}{section}
\begin{document}
\title{On the Ramified Optimal Allocation Problem }
\author{Qinglan Xia}
\address{University of California at Davis\\
Department of Mathematics\\
Davis, CA, 95616}
\email{qlxia@math.ucdavis.edu}
\urladdr{http://math.ucdavis.edu/\symbol{126}qlxia}
\author{Shaofeng Xu}
\address{University of California at Davis\\
Department of Economics\\
Davis, CA, 95616}
\email{sxu@ucdavis.edu}
\subjclass[2000]{Primary 91B32, 58E17; Secondary 49Q20, 90B18. \textit{%
Journal of Economic Literature Classification.} D61, C60, R12, R40.}
\keywords{ramified transportation, transport economy of scale, optimal
transport path, branching structure, allocation problem, assignment map,
state matrix}
\thanks{This work is supported by an NSF grant DMS-0710714.}

\begin{abstract}
This paper proposes an optimal allocation problem with ramified transport
technology in a spatial economy. Ramified transportation is used to model
the transport economy of scale in group transportation observed widely in
both nature and efficiently designed transport systems of branching
structures. The ramified allocation problem aims at finding an optimal
allocation plan as well as an associated optimal allocation path to minimize
overall cost of transporting commodity from factories to households. This
problem differentiates itself from existing ramified transportation
literature in that the distribution of production among factories is not
fixed but endogenously determined as observed in many allocation practices.
It's shown that due to the transport economy of scale in ramified
transportation, each optimal allocation plan corresponds equivalently to an
optimal assignment map from households to factories. This optimal assignment
map provides a natural partition of both households and allocation paths. We
develop methods of marginal transportation analysis and projectional
analysis to study properties of optimal assignment maps. These properties
are then related to the search for an optimal assignment map in the context
of state matrix.
\end{abstract}

\maketitle

\section{Introduction}

One of the lasting interests in economics is to study optimal resource
allocation in a spatial economy. For instance, the well known
Monge-Kantorovich transport problem aims at finding an efficient allocation
plan or map for transporting some commodity from factories to households.
This problem was pioneered by Monge \cite{monge} and advanced fully by
Kantorovich \cite{kantorovich} who won the Nobel prize in economics in 1975
for his seminal work on optimal allocation of resources. Recent advancement
of this problem in mathematics can be found in Villani \cite%
{villani1,villani} and references therein. Monge-Kantorovich problem has
also been applied to study other related economic problems, e.g., spatial
firm pricing (Buttazzo and Carlier \cite{Buttazzo}), principal-agent problem
(Figalli, Kim and McCann \cite{Figalli}), hedonic equilibrium models
(Chiappori, McCann and Nesheim \cite{Chiappori1}; Ekeland \cite{Ekeland1}),
matching and partition in labor market (Carlier and Ekeland \cite{Carlier};
McCann and Trokhimtchouk \cite{McCan}).

Recently, a new research field known as \textit{Ramified Optimal
Transportation }has grown out of Monge-Kantorovich problem. Representative
studies can be found for instance in Gilbert \cite{gilbert}, Xia \cite%
{xia1,xia2,xia6,xia8,xia3,xia11,xia12}, Maddalena, Solimini and Morel \cite%
{msm}, Bernot, Caselles and Morel \cite{BCM,book}, Brancolini, Buttazzo and
Santambrogio \cite{buttazzo0}, Santambrogio \cite{Santa}, Devillanova and
Solimini \cite{Solimini}, Xia and Vershynina \cite{xia9}. Ramified optimal
transport problem studies how to find an \textit{optimal transport path}
from sources to targets as shown in Figure \ref{figure1}. Different from the
standard Monge-Kantorovich problem where the transportation cost is solely
determined by a transport plan or map, the transportation cost in the
ramified transport problem is determined by the actual transport path which
transports the commodity from sources to targets. Ramified transportation
indeed formally formulates the concept of \textit{transport economy of scale}
in group transportation observed widely in both nature (e.g. trees, blood
vessels, river channel networks, lightning) and efficiently designed
transport systems of branching structures (e.g. railway configurations and
postage delivery networks). An application of ramified optimal
transportation in economics can be found in Xia and Xu \cite{xia13}, which
showed that a well designed ramified transport system can improve the
welfare of consumers in the system.
\begin{figure}[tbp]
\includegraphics[width=12cm]{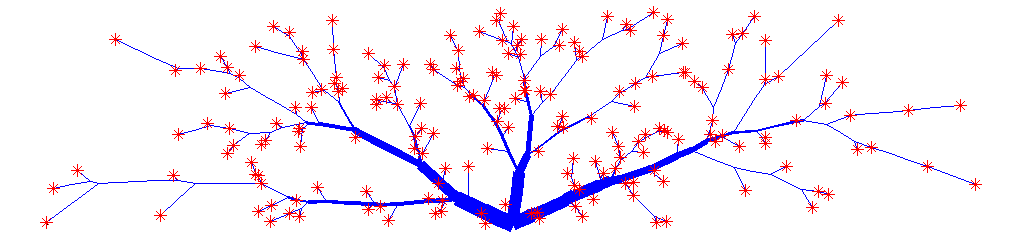}
\caption{An optimal transport path with a ramified structure.}
\label{figure1}
\end{figure}

In this paper, we propose an optimal resource allocation problem where a
planner chooses both an optimal allocation plan as well as an associated
optimal transport path using ramified transport technology. In both
Monge-Kantorovich and ramified transport problems, one typically assumes an
exogenous fixed distribution in both sources and targets. However, in many
resource allocation practices, the distribution in either sources or targets
is not pre-determined but rather determined endogenously. For instance, in a
\textit{production allocation} problem, suppose there are $k$ factories and $%
\ell $ households located in different places in some area. The demand for
some commodity from each household is fixed. Nevertheless, the allocation of
production among factories is not pre-determined but rather depends on the
distribution of demand among households as well as their relative locations
to factories. A planner needs to make an efficient allocation plan of
production over these $k$ factories to meet given demands from $\ell $
households. Under ramified optimal transportation, the transportation cost
of each production plan is determined by an associated optimal transport
path from factories to households. Consequently, the planner needs to find
an optimal production plan as well as an associated optimal transport path
to minimize overall cost of distributing commodity from factories to
households. Another example of similar nature exists in the following
\textit{storage arrangement} problem. Suppose there are $k$ warehouses and $%
\ell $ factories located in different places in some area. Each factory has
already produced some amount of commodity. However, the assignment of
commodity among warehouses is not pre-determined but instead relies on the
distribution of production among factories as well as relative locations
between factories and warehouses. Similarly, a planner needs to make an
efficient storage arrangement as well as an associated optimal transport
path for storing the produced commodity in the given $k$ warehouses with
minimal transportation cost.

Problem of this category is formulated as the \textit{ramified optimal
allocation problem} in Section 2. Throughout the following context, we will
focus our discussion on the scenario of the production allocation problem.
Little additional effort is needed to interpret results for other scenarios.
We start with modeling a transport path from factories to households as a
weighted directed graph, where the transportation cost on each edge of the
graph depends linearly on the length of the edge but concavely on the amount
of commodity moved on the edge. The motivation of concavity of the cost
functional on quantity comes from the observation of transport economy of
scale in group transportation. The more concave is the cost functional or
the greater is the magnitude of transport economy of scale, the more
efficient is to transport commodity in larger groups. We define the cost of
an allocation plan as the minimum transportation cost of a transport path
compatible with this plan. A planner needs to find an efficient allocation
plan such that demands from households will be met in a least cost way. In
this problem, the distribution of production over factories is not
pre-determined as in Monge-Kantorovich or ramified transport problems, but
endogenously determined by the distribution of demands from households as
well as their relative locations to factories.

We prove the existence of the ramified allocation problem in Section 3. It's
shown that due to the transport economy of scale in ramified transportation,
under any optimal allocation plan, no two factories will be connected on any
associated optimal allocation path. Consequently, any optimal allocation
path can be decomposed into a set of mutually disjoint transport paths
originating from each factory. As a result, each household will receive her
commodity from only one factory under any optimal allocation plan. It
implies that each optimal allocation plan corresponds equivalently to an
optimal assignment map from households to factories. Thus, solving the
ramified optimal allocation problem is equivalent to finding an optimal
assignment map. This optimal assignment map is shown to provide a partition
not only in households but also in the associated allocation path according
to the factories.

Because of the equivalence between the optimal allocation plan and
assignment map, we can instead focus attention on studying the properties of
optimal assignment maps in the ramified optimal allocation problem. In
Section 4, we develop a method of marginal transportation analysis to study
properties of optimal assignment maps. This method extends the standard
marginal analysis in economics into the analysis for transport paths. It
builds upon an intuitive idea that a marginal change on an optimal
allocation path should not reduce the existing minimal transportation cost.
Using this method, we develop a criterion which relates the optimal
assignment of a household with her relative location to factories and other
households, as well as her demand and productions at factories. In
particular, it is shown that each factory has a nearby region such that a
household living at this region will be assigned to the factory, where the
size of this region depends positively on the demand of the household. In
this case, the planner takes advantage of relative spatial locations between
households and factories. Also, if an optimal assignment map assigns a
household to some factory, then this household has a neighborhood area such
that any household with a smaller demand living in this area will also be
assigned to the same factory. Here, the planner utilizes the benefit in
group transportation due to transport economy of scale embedded in ramified
transportation. The role of spatial location and group transportation in
resource allocation is further studied in Section 5 by a method of
projectional analysis. We show that under an optimal assignment map, a
household will be assigned to some factory only when either she lives close
to the factory or she has some nearby neighbors assigned to the factory. In
particular, there is an \textquotedblleft autarky\textquotedblright\
situation when households and factories are located on two disjoint areas
lying distant away from each other, the demand of households will solely be
satisfied from local factories.

An important application of the properties of optimal assignment maps is
that they can shed light on the search for those maps. In Section 6, we
develop a search method utilizing these properties in a notion of state
matrix. A state matrix represents the information set of a planner during
the search process for an optimal assignment map. Any zero entry $u_{sh}$ in
the matrix reflects that the planner has excluded the possibility of
assigning household $h$ to factory $s$ under this map. When a state matrix
has exactly one non-zero entry in each column, it completely determines an
optimal assignment map by those non-zero entries. Our search method uses
properties about optimal assignment maps to update some non-zero entries
with zeros in a state matrix. This method is motivated by the observation
that via group transportation under ramified transport technology,
assignment of each household has a global effect on the allocation path as
well as the associated assignment map. Thus, the planner can deduce more
information about the optimal assignment map by exploiting the existing
information embedded in zero entries of a state matrix. Each updated state
matrix contains more zeros and thus more information than its pre-updated
counterpart. This method is useful in the search for optimal assignment maps
as each updating step increases the number of zero entries which in turn
reduces the size of the restriction set of assignment maps in a large
magnitude. In some non-trivial cases, it's shown that this method can
exactly find an optimal assignment map as desired.

\section{Ramified Optimal Allocation Problem}

In this section, we describe the setting of the optimal allocation model
with ramified optimal transportation.

\subsection{Ramified Optimal Transportation}

In a spatial economy, there are $k$ factories and $\ell $ households located
at $\mathbf{x}=\left\{ x_{1},x_{2,\cdots ,}x_{k}\right\}$ and $\left\{
y_{1},y_{2},\cdots ,y_{\ell }\right\} $ in some area $X$, where $X$ is a
compact convex subset of a Euclidean space $\mathbb{R}^{m}$. In this model
economy, there is only one commodity, and each household $j=1,\cdots ,\ell $
has a fixed demand $n_{j}>0$ for the commodity.

For analytical convenience, we first represent households and factories as
atomic Radon measures. Recall that a Radon measure $\mathbf{c}$ on $X$ is
\textit{atomic} if $\mathbf{c}$ is a finite sum of Dirac measures with
positive multiplicities, i.e.,
\begin{equation*}
\mathbf{c}=\sum\limits_{i=1}^{s}c_{i}\delta _{z_{i}}
\end{equation*}%
for some integer $s\geq 1$ and some points $z_{i}\in X$ with $c_{i}>0$ for
each $i=1,\cdots ,s$.\ The mass of $\mathbf{c}$ is denoted by
\begin{equation*}
\mathfrak{m}\left( \mathbf{c}\right) :=\sum_{i=1}^{s}c_{i}.
\end{equation*}%
We can thus represent the $\ell $ households as an atomic measure on $X$ by
\begin{equation}
\mathbf{b}=\sum\limits_{j=1}^{\ell }n_{j}\delta _{y_{j}}.  \label{households}
\end{equation}%
For each $i=1,\cdots ,k,$ denote $m_{i}$ as the units of the commodity
produced at factory $i$ located at $x_{i}$. Then, the $k$ factories can be
represented by another atomic measure on $X$ by
\begin{equation}
\mathbf{a}=\sum_{i=1}^{k}m_{i}\delta _{x_{i}}.  \label{factories}
\end{equation}%
In the study of transport problems, we usually assume $\mathfrak{m}\left(
\mathbf{a}\right) =\mathfrak{m}\left( \mathbf{b}\right) $, i.e.,
\begin{equation*}
\sum\limits_{i=1}^{k}m_{i}=\sum\limits_{j=1}^{\ell }n_{j},
\end{equation*}%
which simply means that supply equals demand in aggregate.

Next, we introduce the concept of transport path from $\mathbf{a}$ to $%
\mathbf{b}$ as in Xia \cite{xia1}.

\begin{definition}
Suppose $\mathbf{a}$\textit{\ and }$\mathbf{b}$ are two atomic measures on $%
X $ of equal mass. A \textit{transport path from }$\mathbf{a}$\textit{\ to }$%
\mathbf{b}$ is a weighted directed graph $G$ consisting of a vertex set $%
V\left( G\right) $, a directed edge set $E\left( G\right) $ and a weight
function $w:E\left( G\right) \rightarrow \left( 0,+\infty \right) $ such
that $\{x_{1},x_{2},...,x_{k}\}\cup \{y_{1},y_{2},...,y_{\ell }\}\subseteq
V(G)$ and for any vertex $v\in V(G)$, there is a balance equation
\begin{equation}
\sum_{e\in E(G),e^{-}=v}w(e)=\sum_{e\in E(G),e^{+}=v}w(e)+\left\{
\begin{array}{c}
m_{i},\text{\ if }v=x_{i}\text{\ for some }i=1,...,k \\
-n_{j},\text{\ if }v=y_{j}\text{\ for some }j=1,...,\ell \\
0,\text{\ otherwise }%
\end{array}%
\right.  \label{balance}
\end{equation}%
where each edge $e\in E\left( G\right) $ is a line segment from the starting
endpoint $e^{-}$ to the ending endpoint $e^{+}$. Denote $Path\left( \mathbf{%
a,b}\right) $ as the space of all transport paths from $\mathbf{a}$ to $%
\mathbf{b}$.
\end{definition}

Note that the balance equation (\ref{balance}) simply means the conservation
of mass at each vertex. Viewing $G$ as an one dimensional polyhedral chain,
equation (\ref{balance}) may simply be expressed as $\partial G=\mathbf{b}-%
\mathbf{a}$.

Now, we consider the transportation cost of a transport path. As observed in
both nature and efficiently designed transport networks, there exists a
\textit{transport economy of scale} underlying group transportation. For
this consideration, ramified optimal transport theory uses a cost functional
depending concavely on quantity and defines the transportation cost of a
transport path as follows.

\begin{definition}
For each transport path $G\in Path\left( \mathbf{a,b}\right) $ and any $%
\alpha \in \left[ 0,1\right] $, the $\mathbf{M}_{\alpha }$ cost of $G$ is
defined by
\begin{equation}
\mathbf{M}_{\alpha }\left( G\right) :=\sum_{e\in E\left( G\right) }\left[
w\left( e\right) \right] ^{\alpha }length\left( e\right) .  \label{M_a_cost}
\end{equation}
\end{definition}

The parameter $\alpha $ represents the magnitude of transport economy of
scale. The smaller the $\alpha ,$ the more efficient is to move commodity in
groups. Ramified optimal transport problem studies how to find a transport
path to minimize the $\mathbf{M}_{\alpha }$\textbf{\ }cost, i.e.,
\begin{equation}
\min_{G\in Path\left( \mathbf{a,b}\right) }\mathbf{M}_{\alpha }\left(
G\right) ,  \label{Ramified transport problem}
\end{equation}%
whose minimizer is called an \textit{optimal transport path }from $\mathbf{a}
$ to $\mathbf{b}$. An optimal transport path has many nice properties. For
instance, it contains no cycles by Xia \cite[Proposition 2.1]{xia1}. Thus,
without loss of generality, we assume that \textit{all transport paths
considered in the following context contain no cycles}. When $\alpha <1$, an
optimal transport path is generally of branching structure. In the scenario
with two sources and one target, a \textquotedblleft
Y-shaped\textquotedblright\ path is usually more preferable than a
\textquotedblleft V-shaped\textquotedblright\ path.

\begin{figure}[tbp]
%\begin{centering}
\includegraphics[width=5cm]{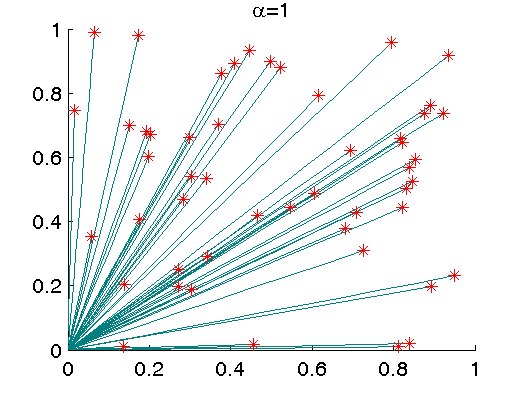}\includegraphics[width=5cm]{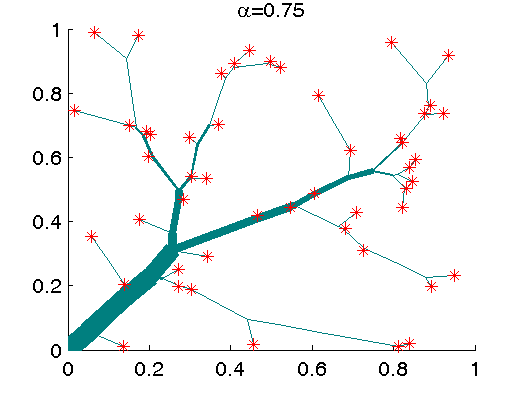}%
\includegraphics[width=5cm]{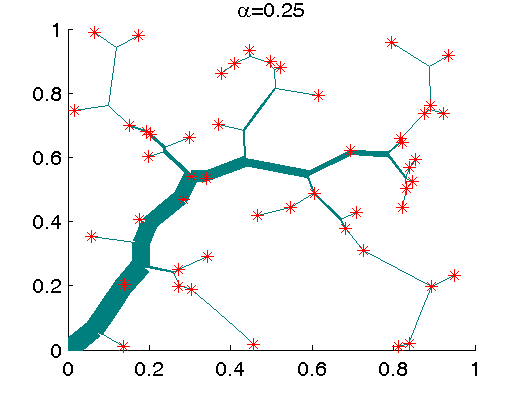} %\end{centering}
\caption{Examples of optimal transport paths.}
\label{figure2}
\end{figure}

The following example illustrates the effect of transport economy of scale
on optimal transport path in a spatial economy with one factory $\mathbf{a}%
=\delta _{O}$ located at origin and fifty households $\mathbf{b}%
=\sum_{j=1}^{50}\frac{1}{50}\delta _{y_{j}}$ of equal demand $n_{j}=\frac{1}{%
50}.$ The locations of these fifty households are randomly selected. As seen
in Figure \ref{figure2}, when $\alpha =1,$ the optimal transport path is
\textquotedblleft linear\textquotedblright\ in the sense that the factory
will ship commodity directly to each household. When $\alpha <1$, the
transport path becomes \textquotedblleft ramified\textquotedblright\ as the
planner would like commodity to be transported in groups in order to utilize
the benefit of transport economy of scale. Furthermore, by comparing for
instance the width of the transport paths for $\alpha =0.75$ and $\alpha
=0.25$, we observe that the smaller the $\alpha $, the more likely the
commodity will be transported in a large scale.

For any atomic measures $\mathbf{a}$ and $\mathbf{b}$ on $X$ of equal mass,
define the minimum transportation cost as
\begin{equation}
d_{\alpha }\left( \mathbf{a,b}\right) :=\min \left\{ \mathbf{M}_{\alpha
}\left( G\right) :G\in Path\left( \mathbf{a,b}\right) \right\} .
\label{d_alpha}
\end{equation}%
As shown in Xia \cite{xia1}, $d_{\alpha }$ is indeed a metric on the space
of atomic measures of equal mass. Also, for each $\lambda >0$, it holds that
\begin{equation*}
d_{\alpha }\left( \lambda \mathbf{a},\lambda \mathbf{b}\right) =\lambda
^{\alpha }d_{\alpha }\left( \mathbf{a,b}\right) .
\end{equation*}

Without loss of generality, we normalize both $\mathbf{a}$ and $\mathbf{b}$
to be a probability measure on $X$, i.e.,
\begin{equation}
\sum\limits_{i=1}^{k}m_{i}=\sum\limits_{j=1}^{\ell }n_{j}=1.
\label{market clearing}
\end{equation}

\subsection{Compatibility between Transport Plan and Path}

For the allocation problem, a key decision a planner needs to make is about
the transport plan from factories to households.

\begin{definition}
Suppose $\mathbf{a}$ and $\mathbf{b}$ are two atomic probability measures on
$X$ as in (\ref{factories}), (\ref{households}) and (\ref{market clearing}).
A \textit{transport plan} from $\mathbf{a}$ to $\mathbf{b}$ is an atomic
probability measure
\begin{equation}
q=\sum_{i=1}^{k}\sum_{j=1}^{\ell }q_{ij}\delta _{\left( x_{i},y_{j}\right) }
\label{transport_plan}
\end{equation}%
on the product space $X\times X$ such that for each $i$ and $j$, $q_{ij}\geq
0$,
\begin{equation}
\sum_{i=1}^{k}q_{ij}=n_{j}\text{ and }\sum_{j=1}^{\ell }q_{ij}=m_{i}.
\label{margins}
\end{equation}%
Denote $Plan\left( \mathbf{a},\mathbf{b}\right) $ as the space of all
transport plans from $\mathbf{a}$ to $\mathbf{b}$.
\end{definition}

In a transport plan $q$, the number $q_{ij}$ denotes the amount of commodity
received by household $j$ from factory $i$.

Now, as in Section 7.1 of Xia \cite{xia1}, we want to consider the
compatibility between a transport path and a transport plan. Let $G$ be a
given transport path in $Path\left( \mathbf{a,b}\right) $. Since $G$
contains no cycles, for each $x_{i}$ and $y_{j}$, there exists at most one
directed polyhedral curve $g_{ij}$ on $G$ from $x_{i}$ to $y_{j}$. In other
words, there exists a list of distinct vertices
\begin{equation}
V\left( g_{ij}\right) :=\left\{ v_{i_{1}},v_{i_{2}},\cdots ,v_{i_{h}}\right\}
\label{V_g}
\end{equation}%
in $V\left( G\right) $ with $x_{i}=v_{i_{1}}$, $y_{j}=v_{i_{h}}$, and each $%
\left[ v_{i_{t}},v_{i_{t+1}}\right] $ is a directed edge in $E\left(
G\right) $ for each $t=1,2,\cdots ,h-1$. For some pairs of $\left(
i,j\right) $, such a curve $g_{ij}$ from $x_{i}$ to $y_{j}$ may not exist,
in which case we set $g_{ij}=0$ to denote the \textit{empty} directed
polyhedral curve. By doing so, we construct a matrix
\begin{equation}
g=\left( g_{ij}\right) _{k\times \ell }  \label{g_matrix}
\end{equation}%
with each element of $g$ being a polyhedral curve. For any transport path $%
G\in Path\left( \mathbf{a,b}\right) $, such a matrix $g=\left( g_{ij}\right)
$ is uniquely determined.

\begin{definition}
Let $G\in Path\left( \mathbf{a,b}\right) $ be a transport path and $q\in
Plan\left( \mathbf{a,b}\right) $ be a transport plan. The pair $\left(
G,q\right) $ is compatible if $q_{ij}=0$ whenever $g_{ij}=0$ and
\begin{equation}
G=q\cdot g.  \label{compatible_pair}
\end{equation}
\end{definition}

Here, equation (\ref{compatible_pair}) means that as polyhedral chains,
\begin{equation*}
G=\sum_{i=1}^{k}\sum_{j=1}^{\ell }q_{ij}\cdot g_{ij},
\end{equation*}%
where the product $q_{ij}\cdot g_{ij}$ denotes that an amount $q_{ij}$ of
commodity is moved along the polyhedral curve $g_{ij}$ from factory $i$ to
household $j$.

\begin{figure}[h]
\centering
\subfloat[$G_1$]{\label{g_1}\includegraphics[width=0.35\textwidth,
height=2in]{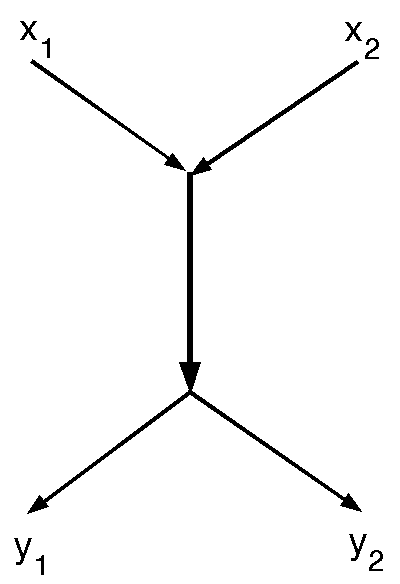}} \hspace{0.5in} \subfloat[$G_2$]{\label{g_2}%
\includegraphics[width=0.35\textwidth,height=2in]{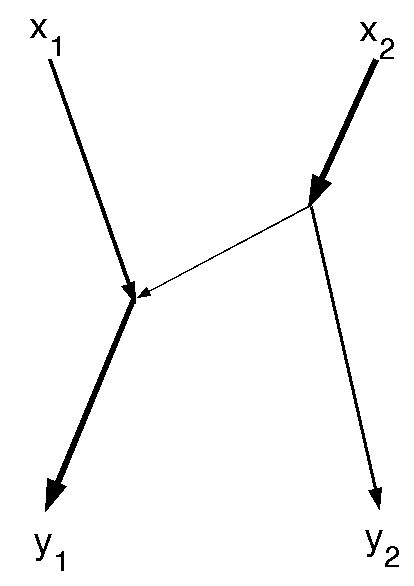}}
\caption{Compatibility between transport plan and transport path.}
\label{compatible_figure}
\end{figure}

Roughly speaking, the compatibility conditions check whether a transport
plan is realizable by a transport path. Given a transport plan, the planner
must design a transport path which can support this plan. To see the concept
more precisely, let
\begin{equation*}
\mathbf{a=}\frac{1}{4}\delta _{x_{1}}+\frac{3}{4}\delta _{x_{2}}\text{ and }%
\mathbf{b=}\frac{5}{8}\delta _{y_{1}}+\frac{3}{8}\delta _{y_{2}},
\end{equation*}%
and consider a transport plan
\begin{equation}
q=\frac{1}{8}\delta _{\left( x_{1},y_{1}\right) }+\frac{1}{8}\delta _{\left(
x_{1},y_{2}\right) }+\frac{1}{2}\delta _{\left( x_{2},y_{1}\right) }+\frac{1%
}{4}\delta _{\left( x_{2},y_{2}\right) }\in Plan\left( \mathbf{a,b}\right) .
\label{example_q}
\end{equation}%
It is straight forward to see from Figure \ref{compatible_figure} that $q$
is compatible with $G_{1}$ but not $G_{2}.$ This is because there is no
directed curve $g_{12}$ from factory $1$ to household $2$ in $G_{2}$.

\subsection{Ramified Allocation Problem}

In the standard transport problems, e.g. Monge-Kantorovich or ramified
optimal transportation, one typically assumes an exogenous fixed
distribution of production among factories. In this paper, we consider a
scenario where this distribution is not fixed but endogenously determined.
In other words, the atomic measure $\mathbf{a}$ which represents the $k$
factories can have varying production level $m_{i}$ at each factory $i$.
This consideration is motivated by our observation that in many allocation
practices as discussed in the introduction, the distribution of production
among factories is not pre-determined but rather depends on the distribution
of demand among households as well as their relative locations to factories.

\begin{definition}
Let $\mathbf{x}=\left\{ x_{1},x_{2,\cdots ,}x_{k}\right\} $ be a finite
subset of $X$, and $\mathbf{b}$ be the atomic\ probability measure
representing households defined in (\ref{households}). An allocation plan
from $\mathbf{x}$ to $\mathbf{b}$ is a probability measure
\begin{equation*}
q=\sum_{i=1}^{k}\sum_{j=1}^{\ell }q_{ij}\delta _{\left( x_{i},y_{j}\right) }
\end{equation*}%
on $X\times X$ such that $q_{ij}\geq 0$ for each $i$, $j$ and
\begin{equation*}
\sum_{i=1}^{k}q_{ij}=n_{j}\text{ for each }j=1,\cdots ,\ell .
\end{equation*}%
Denote $Plan\left[ \mathbf{x},\mathbf{b}\right] $ as the set of all
allocation plans from $\mathbf{x}$ to $\mathbf{b}$.
\end{definition}

Note that any allocation plan $q\in Plan\left[\mathbf{x},\mathbf{b}\right] $
corresponds to a transport plan $q$ from $\mathbf{a}\left( q\right) $ to $%
\mathbf{b}$, where $\mathbf{a}\left( q\right) $ is the probability measure
representing $k$ factories defined as%
\begin{equation}
\mathbf{a}\left( q\right) :=\sum_{i=1}^{k}m_{i}\left( q\right) \delta
_{x_{i}},\text{ with }m_{i}\left( q\right) =\sum_{j=1}^{\ell }q_{ij},\text{ }%
i=1,...,k.  \label{factories_vary}
\end{equation}%
In other words, $Plan\left[ \mathbf{x},\mathbf{b}\right] $ is the union of $%
Plan\left( \mathbf{a},\mathbf{b}\right) $ among all atomic probability
measures $\mathbf{a}$ supported on $\mathbf{x}$.

\begin{example}
Any function $S:\left\{ 1,\cdots ,\ell \right\} \rightarrow \left\{ 1,\cdots
,k\right\} $ determines an allocation plan in $Plan\left[\mathbf{x},\mathbf{b%
}\right] $ as
\begin{equation*}
q_{S}=\sum_{i=1}^{k}\sum_{j=1}^{\ell }q_{ij}\delta _{\left(
x_{i},y_{j}\right) }\text{ with }q_{ij}=\left\{
\begin{array}{cc}
n_{j}, & \text{if }i=S\left( j\right) \\
0, & \text{else}%
\end{array}%
\right. .
\end{equation*}%
That is,
\begin{equation}
q_{S}=\sum_{j=1}^{\ell }n_{j}\delta _{\left( x_{\left( S\left( j\right)
\right) },y_{j}\right) }.  \label{q_S}
\end{equation}
\end{example}

For a given allocation plan, we define the associated transportation cost as
follows.

\begin{definition}
For any allocation plan $q\in Plan\left[ \mathbf{x},\mathbf{b}\right] $ and $%
\alpha \in \lbrack 0,1)$, the ramified transportation cost of $q$ is
\begin{equation}
\mathbf{T}_{\alpha }\left( q\right) :=\min \left\{ \mathbf{M}_{\alpha
}\left( G\right) :G\in Path\left( \mathbf{a}\left( q\right) ,\mathbf{b}%
\right) \text{, }\left( G,q\right) \text{ compatible}\right\} ,
\label{m_alpha_q}
\end{equation}%
where $\mathbf{M}_{\alpha }\left( \cdot \right) $ is defined in (\ref%
{M_a_cost}). An \textit{allocation plan} $q^{\ast }\in Plan\left[ \mathbf{x},%
\mathbf{b}\right] $ is \textit{optimal }if
\begin{equation*}
\mathbf{T}_{\alpha }\left( q^{\ast }\right) \leq \mathbf{T}_{\alpha }\left(
q\right) \text{ for any }q\in Plan\left[ \mathbf{x},\mathbf{b}\right] .
\end{equation*}
\end{definition}

For each allocation plan $q,$ as in Xia \cite[Proposition 7.3]{xia1}, there
exists a path $G_{q}\in Path\left( \mathbf{a}\left( q\right) ,\mathbf{b}%
\right) $ such that $G_{q}$ is compatible with $q$ and
\begin{equation}
\mathbf{T}_{\alpha }\left( q\right) =\mathbf{M}_{\alpha }\left( G_{q}\right)
.  \label{planpath}
\end{equation}%
Thus, the minimum value in (\ref{m_alpha_q}) is achieved by $G_{q}$ for each
$q$. Now, we are ready to define the major problem in this paper.

\begin{problem}
\label{Problem}(Ramified Optimal Allocation Problem) Let $X$ be a compact
convex domain in $\mathbb{R}^{m}$ with the standard norm $\left\Vert \cdot
\right\Vert $. Given a finite subset $\mathbf{x}=\left\{ x_{1},x_{2,\cdots
,}x_{k}\right\} $ in $X$, an atomic probability measure $\mathbf{b}$ on $X$
defined in (\ref{households}), and a parameter $\alpha \in \lbrack 0,1)$.
Find a minimizer of $\mathbf{T}_{\alpha }\left( q\right) $ among all
allocation plans $q\in Plan\left[ \mathbf{x},\mathbf{b}\right] $, i.e.,
\begin{equation}
\min \text{ }\left\{ \mathbf{T}_{\alpha }\left( q\right) :q\in Plan\left[
\mathbf{x},\mathbf{b}\right] \right\} .  \label{Assignment
problem}
\end{equation}
\end{problem}

\section{Characterizing Optimal Allocation Plans}

In this section, we first establish the existence result of the ramified
optimal allocation problem. It's then shown that any optimal allocation plan
corresponds to an optimal assignment map from households to factories, which
provides a partition in both households and transport paths.

The following proposition proves the existence of Problem \ref{Problem}.

\begin{proposition}
\label{existence}The ramified optimal allocation problem (\ref{Assignment
problem}) has a solution. Moreover,
\begin{equation}
\min \text{ }\left\{ \mathbf{T}_{\alpha }\left( q\right) :q\in Plan\left[
\mathbf{x},\mathbf{b}\right] \right\} =\min \text{ }\left\{ d_{\alpha
}\left( \sum_{i=1}^{k}m_{i}\delta _{x_{i}}\mathbf{,b}\right) :\vec{m}\in
K\right\} ,  \label{equality}
\end{equation}%
where $K=\left\{ \vec{m}=\left( m_{1},...,m_{k}\right) \in \mathbb{R}%
_{+}^{k}:\sum_{i=1}^{k}m_{i}=1\right\} $ and $d_{\alpha }$ is defined in (%
\ref{d_alpha}).
\end{proposition}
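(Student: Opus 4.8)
The plan is to prove the displayed identity (\ref{equality}) by two opposing inequalities and then to read off existence from its right-hand side, which is an honest minimization over the compact set $K$. The first step is the elementary comparison
\[
\mathbf{T}_{\alpha }\left( q\right) \geq d_{\alpha }\left( \mathbf{a}\left( q\right) ,\mathbf{b}\right) \qquad \text{for every }q\in Plan\left[ \mathbf{x},\mathbf{b}\right] ,
\]
which is immediate from the definitions: in (\ref{m_alpha_q}) the minimum defining $\mathbf{T}_{\alpha }\left( q\right) $ ranges only over the paths compatible with $q$, a subset of all of $Path\left( \mathbf{a}\left( q\right) ,\mathbf{b}\right) $ over which $d_{\alpha }$ minimizes. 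Since $\sum_{i}m_{i}\left( q\right) =\sum_{j}n_{j}=1$, the vector $\vec{m}\left( q\right) =\left( m_{1}\left( q\right) ,\dots ,m_{k}\left( q\right) \right) $ lies in $K$ and $\mathbf{a}\left( q\right) =\sum_{i}m_{i}\left( q\right) \delta _{x_{i}}$, so $d_{\alpha }\left( \mathbf{a}\left( q\right) ,\mathbf{b}\right) \geq \min_{\vec{m}\in K}d_{\alpha }\left( \sum_{i}m_{i}\delta _{x_{i}},\mathbf{b}\right) $. Taking the minimum over $q$ yields the inequality ``$\geq $'' in (\ref{equality}).

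For the reverse inequality I would start from an arbitrary $\vec{m}\in K$ and let $G$ be an optimal transport path realizing $d_{\alpha }\left( \sum_{i}m_{i}\delta _{x_{i}},\mathbf{b}\right) $, which exists by Xia \cite{xia1}; by the standing convention $G$ is acyclic. The key construction is a compatible plan obtained by decomposing the acyclic flow $G$ into source-to-target path flows (the flow decomposition underlying Section 7.1 of Xia \cite{xia1}): this produces nonnegative weights $q_{ij}$, vanishing whenever the curve $g_{ij}$ is empty, with $\sum_{i}q_{ij}=n_{j}$, $\sum_{j}q_{ij}=m_{i}$, and $G=\sum_{i,j}q_{ij}g_{ij}$. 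Hence $q\in Plan\left[ \mathbf{x},\mathbf{b}\right] $, $\mathbf{a}\left( q\right) =\sum_{i}m_{i}\delta _{x_{i}}$, and $\left( G,q\right) $ is compatible, so $\mathbf{T}_{\alpha }\left( q\right) \leq \mathbf{M}_{\alpha }\left( G\right) =d_{\alpha }\left( \sum_{i}m_{i}\delta _{x_{i}},\mathbf{b}\right) $. Minimizing the left side over $q$ and then over $\vec{m}$ gives ``$\leq $'', so (\ref{equality}) holds.

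For existence it remains to show the right-hand side of (\ref{equality}) is attained. The set $K$ is compact, so it suffices that $F\left( \vec{m}\right) :=d_{\alpha }\left( \sum_{i}m_{i}\delta _{x_{i}},\mathbf{b}\right) $ be lower semicontinuous on $K$. For $\alpha \in \left( 0,1\right) $ this is in fact continuity: the triangle inequality for the metric $d_{\alpha }$ gives $\left\vert F\left( \vec{m}\right) -F\left( \vec{m}'\right) \right\vert \leq d_{\alpha }\left( \sum_{i}m_{i}\delta _{x_{i}},\sum_{i}m_{i}'\delta _{x_{i}}\right) $, and routing the mass difference directly bounds the latter by a multiple of $\sum_{i}\left\vert m_{i}-m_{i}'\right\vert ^{\alpha }\,\mathrm{diam}(X)\rightarrow 0$; for $\alpha =0$ one instead invokes the weak-$\ast $ lower semicontinuity of $d_{\alpha }$ (Xia \cite{xia1}), noting $\vec{m}\mapsto \sum_{i}m_{i}\delta _{x_{i}}$ is weak-$\ast $ continuous. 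Let $\vec{m}^{\ast }\in K$ attain the minimum and let $q^{\ast }$ be the compatible plan produced from an optimal path for $\vec{m}^{\ast }$ as above. Then $\mathbf{T}_{\alpha }\left( q^{\ast }\right) \leq d_{\alpha }\left( \sum_{i}m_{i}^{\ast }\delta _{x_{i}},\mathbf{b}\right) $, while the first inequality gives $\mathbf{T}_{\alpha }\left( q^{\ast }\right) \geq d_{\alpha }\left( \mathbf{a}\left( q^{\ast }\right) ,\mathbf{b}\right) =d_{\alpha }\left( \sum_{i}m_{i}^{\ast }\delta _{x_{i}},\mathbf{b}\right) $; hence $q^{\ast }$ realizes the common value in (\ref{equality}) and is optimal.

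I expect the main obstacle to be the construction of the compatible plan from a given optimal path, that is, making the flow decomposition of an acyclic transport path into source-to-target path flows precise and checking that the resulting marginals are exactly $\left( m_{i}\right) $ and $\left( n_{j}\right) $; this is the one place where the acyclicity convention and the bookkeeping of the matrix $g=\left( g_{ij}\right) $ must be used carefully. A secondary technical point is the boundary case $\alpha =0$, where $F$ is only lower semicontinuous (turning a factory off can strictly lower the cost, so continuity fails) and one must fall back on the general semicontinuity of $d_{\alpha }$ rather than the elementary estimate available for $\alpha \in \left( 0,1\right) $.
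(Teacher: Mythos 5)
Your proof is correct and rests on exactly the same three ingredients as the paper's: the comparison $\mathbf{T}_{\alpha }\left( q\right) \geq d_{\alpha }\left( \mathbf{a}\left( q\right) ,\mathbf{b}\right) $, the passage from an optimal transport path to a compatible plan with the right marginals (the paper simply cites Lemma 7.1 of Xia \cite{xia1}, which \emph{is} the flow decomposition you propose to reconstruct, so your anticipated ``main obstacle'' is already a quotable result), and minimization of $F\left( \vec{m}\right) =d_{\alpha }\left( \sum_{i}m_{i}\delta _{x_{i}},\mathbf{b}\right) $ over the compact simplex $K$. The only structural difference is cosmetic: the paper attains $\vec{m}^{\ast }$ first and then sandwiches $\mathbf{T}_{\alpha }\left( q^{\ast }\right) $ between $d_{\alpha }\left( \mathbf{a}^{\ast },\mathbf{b}\right) $ and the infimum, rather than proving (\ref{equality}) by two opposing inequalities.

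Your one substantive divergence is the case $\alpha =0$, and there your caution is not pedantry but an actual correction: the paper's assertion that $F$ is continuous on $K$ ``since $d_{\alpha }$ is a metric'' is false at $\alpha =0$. Indeed, take $k=2$, $\ell =1$, $x_{2}=y_{1}$ and $x_{1}\neq y_{1}$; then $F\left( \epsilon ,1-\epsilon \right) =\left\Vert x_{1}-y_{1}\right\Vert $ for every $\epsilon >0$ while $F\left( 0,1\right) =0$, because the $\mathbf{M}_{0}$ cost of an edge is its length regardless of its weight. Lower semicontinuity does hold and suffices for attainment on the compact set $K$, exactly as you argue. The one soft spot in your write-up is attributing the needed semicontinuity of $d_{0}$ to Xia \cite{xia1}, whose lower semicontinuity results are stated for $\alpha \in (1-1/m,1]$; in the present finite setting you can close this gap directly: optimal $\alpha =0$ paths are Steiner-like trees with a number of vertices bounded in terms of $k+\ell $, so for $\vec{m}_{n}\rightarrow \vec{m}$ one extracts a subsequence of minimizers with a fixed combinatorial type and convergent vertices and weights, and the limit graph (after deleting zero-weight edges) lies in $Path\left( \sum_{i}m_{i}\delta _{x_{i}},\mathbf{b}\right) $ with $\mathbf{M}_{0}$ cost at most the limit, giving $F\left( \vec{m}\right) \leq \liminf_{n}F\left( \vec{m}_{n}\right) $. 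With that repair, your argument is complete and in fact slightly more careful than the published proof.
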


\begin{proof}
Since $d_{\alpha }$ is a metric, it is easy to see that $d_{\alpha }\left(
\sum_{i=1}^{k}m_{i}\delta _{x_{i}}\mathbf{,b}\right) $ is a continuous
function in $\left( m_{1},\cdots ,m_{k}\right) $ on the compact subset $K$
of $%
%TCIMACRO{\U{211d} }%
%BeginExpansion
\mathbb{R}
%EndExpansion
_{+}^{k}$. Thus, the function $d_{\alpha }\left( \sum_{i=1}^{k}m_{i}\delta
_{x_{i}}\mathbf{,b}\right) $ achieves its minimum value in $K$ at some $\vec{%
m}^{\ast }=\left( m_{1}^{\ast },...,m_{k}^{\ast }\right) $. That is,
\begin{equation}
d_{\alpha }\left( \mathbf{a}^{\ast }\mathbf{,b}\right) =\min_{\vec{m}\in K}%
\text{ }d_{\alpha }\left( \sum_{i=1}^{k}m_{i}\delta _{x_{i}}\mathbf{,b}%
\right) ,  \label{d_a_min}
\end{equation}%
where $\mathbf{a}^{\ast }=\sum_{i=1}^{k}m_{i}^{\ast }\delta _{x_{i}}$. Now,
let $G^{\ast }$ be an optimal transport path in $Path\left( \mathbf{a}^{\ast
}\mathbf{,b}\right) $ with $\mathbf{M}_{\alpha }\left( G^{\ast }\right)
=d_{\alpha }\left( \mathbf{a}^{\ast }\mathbf{,b}\right) $. By Xia \cite[%
Lemma 7.1]{xia1}, $G^{\ast }$ has at least one compatible plan $q^{\ast }\in
Plan\left( \mathbf{a}^{\ast }\mathbf{,b}\right) \subseteq Plan\left[ \mathbf{%
x},\mathbf{b}\right] $ and thus
\begin{equation}
\mathbf{T}_{\alpha }\left( q^{\ast }\right) \leq \mathbf{M}_{\alpha }\left(
G^{\ast }\right) =d_{\alpha }\left( \mathbf{a}^{\ast }\mathbf{,b}\right) .
\label{inequality 2}
\end{equation}%
For any $q\in Plan\left[ \mathbf{x},\mathbf{b}\right] $, we have
\begin{eqnarray*}
\mathbf{T}_{\alpha }\left( q\right) &\geq &d_{\alpha }\left( \mathbf{a}%
\left( q\right) ,\mathbf{b}\right) \text{, by (\ref{d_alpha}) and (\ref%
{m_alpha_q})} \\
&\geq &d_{\alpha }\left( \mathbf{a}^{\ast }\mathbf{,b}\right) \text{, by (%
\ref{d_a_min})} \\
&\geq &\mathbf{T}_{\alpha }\left( q^{\ast }\right) \text{, by (\ref%
{inequality 2}).}
\end{eqnarray*}%
This shows that
\begin{equation}
\mathbf{T}_{\alpha }\left( q^{\ast }\right) =\min \text{ }\left\{ \mathbf{T}%
_{\alpha }\left( q\right) :q\in Plan\left[ \mathbf{x},\mathbf{b}\right]
\right\} \text{ and }\mathbf{T}_{\alpha }\left( q^{\ast }\right) =d_{\alpha
}\left( \mathbf{a}^{\ast }\mathbf{,b}\right) .  \label{T=d_a}
\end{equation}%
Thus, $q^{\ast }$ is a solution to the ramified optimal allocation problem (%
\ref{Assignment problem}).
\end{proof}

One implication of the above proposition is that there exists a close
relationship between optimal allocation plans and underlying transport
paths. To further characterize the properties of an optimal allocation plan,
we introduce the concept of allocation paths as follows:

\begin{definition}
An allocation path from $\mathbf{x}$ to $\mathbf{b}$ is a transport path $%
G\in Path\left( \mathbf{a,b}\right) $ for some atomic probability measure $%
\mathbf{a}$ supported on $\mathbf{x}$. Denote $Path\left[ \mathbf{x},\mathbf{%
b}\right] $ as the set of all allocation paths from $\mathbf{x}$ to $\mathbf{%
b}$. An allocation path $G^{\ast }\in Path\left[ \mathbf{x},\mathbf{b}\right]
$ is optimal if
\begin{equation*}
\mathbf{M}_{\alpha }\left( G^{\ast }\right) \leq \mathbf{M}_{\alpha }\left(
G\right) \text{ for any }G\in Path\left[ \mathbf{x},\mathbf{b}\right] .
\end{equation*}
\end{definition}

By the definition of $d_{\alpha }$, equation (\ref{equality}) can be
alternatively written as%
\begin{equation}
\min \text{ }\left\{ \mathbf{T}_{\alpha }\left( q\right) :q\in Plan\left[
\mathbf{x},\mathbf{b}\right] \right\} =\min \left\{ \mathbf{M}_{\alpha
}\left( G\right) :G\in Path\left[ \mathbf{x},\mathbf{b}\right] \right\} ,
\label{equalityTM}
\end{equation}%
which shows that the ramified optimal allocation problem corresponds to a
problem of finding an optimal allocation path. The following lemma
establishes the connection between optimal allocation plans and paths via
the notion of compatibility.

\begin{lemma}
\label{Path_Plan_Lemma}Suppose $G\in Path\left[ \mathbf{x},\mathbf{b}\right]
$, $q\in Plan\left[ \mathbf{x},\mathbf{b}\right] $ and $\left( G,q\right) $
is compatible. Then, $G$ is an optimal allocation path if and only if $q$ is
an optimal allocation plan with $\mathbf{T}_{\alpha }\left( q\right) =%
\mathbf{M}_{\alpha }\left( G\right) $.
\end{lemma}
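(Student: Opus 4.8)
The plan is to reduce the entire statement to the common optimal value
\[
V:=\min\left\{\mathbf{T}_{\alpha}\left(q\right):q\in Plan\left[\mathbf{x},\mathbf{b}\right]\right\}=\min\left\{\mathbf{M}_{\alpha}\left(G\right):G\in Path\left[\mathbf{x},\mathbf{b}\right]\right\},
\]
where the two expressions coincide by equation (\ref{equalityTM}). By the definitions of \emph{optimal allocation plan} and \emph{optimal allocation path}, saying that $q$ is optimal means exactly $\mathbf{T}_{\alpha}\left(q\right)=V$, and saying that $G$ is optimal means exactly $\mathbf{M}_{\alpha}\left(G\right)=V$. So the whole lemma becomes bookkeeping around these two characterizations, once I supply one universal inequality for compatible pairs.

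First I would record that inequality. The key preliminary observation is that a compatible pair $\left(G,q\right)$ automatically has $G\in Path\left(\mathbf{a}\left(q\right),\mathbf{b}\right)$, so that $G$ is an \emph{admissible competitor} in the minimization defining $\mathbf{T}_{\alpha}\left(q\right)$ in (\ref{m_alpha_q}). This is the single spot requiring a computation: writing $G=q\cdot g=\sum_{i,j}q_{ij}\,g_{ij}$ and taking boundaries as polyhedral chains gives
\[
\partial G=\sum_{i,j}q_{ij}\left(\delta_{y_{j}}-\delta_{x_{i}}\right)=\sum_{j}n_{j}\delta_{y_{j}}-\sum_{i}m_{i}\left(q\right)\delta_{x_{i}}=\mathbf{b}-\mathbf{a}\left(q\right),
\]
using the allocation constraint $\sum_{i}q_{ij}=n_{j}$ and the definition $m_{i}\left(q\right)=\sum_{j}q_{ij}$. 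Hence $G$ is a path from $\mathbf{a}\left(q\right)$ to $\mathbf{b}$ compatible with $q$, and since $\mathbf{T}_{\alpha}\left(q\right)$ is by definition the minimum of $\mathbf{M}_{\alpha}$ over exactly such paths, I obtain the universal inequality
\[
\mathbf{T}_{\alpha}\left(q\right)\leq\mathbf{M}_{\alpha}\left(G\right)
\]
for every compatible pair $\left(G,q\right)$.

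With this inequality in hand both implications are immediate. For the ``if'' direction, assume $q$ is an optimal allocation plan with $\mathbf{T}_{\alpha}\left(q\right)=\mathbf{M}_{\alpha}\left(G\right)$; then $\mathbf{M}_{\alpha}\left(G\right)=\mathbf{T}_{\alpha}\left(q\right)=V$, and since $G\in Path\left[\mathbf{x},\mathbf{b}\right]$ attains the path-minimum $V$, it is an optimal allocation path. For the ``only if'' direction, assume $G$ is an optimal allocation path, so $\mathbf{M}_{\alpha}\left(G\right)=V$. Chaining the universal inequality with the fact that $V$ is the smallest value of $\mathbf{T}_{\alpha}$ over all plans yields
\[
V\leq\mathbf{T}_{\alpha}\left(q\right)\leq\mathbf{M}_{\alpha}\left(G\right)=V,
\]
forcing equality throughout; thus $\mathbf{T}_{\alpha}\left(q\right)=V=\mathbf{M}_{\alpha}\left(G\right)$, which shows simultaneously that $q$ is optimal and that the stated cost identity holds.

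I do not anticipate a genuine obstacle: the argument is entirely a matter of unwinding the definitions and invoking (\ref{equalityTM}). The only step that demands care is the boundary computation confirming $\partial G=\mathbf{b}-\mathbf{a}\left(q\right)$, since it is precisely this that certifies a compatible $G$ as eligible in the infimum defining $\mathbf{T}_{\alpha}\left(q\right)$; without it, the universal inequality would be unjustified and the two minimizations could not be linked.
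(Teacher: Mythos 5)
Your proof is correct and follows essentially the same route as the paper: both arguments rest on the identity (\ref{equalityTM}) together with the inequality $\mathbf{T}_{\alpha}\left(q\right)\leq\mathbf{M}_{\alpha}\left(G\right)$ for compatible pairs, which the paper invokes directly from the definition (\ref{m_alpha_q}) while you verify it explicitly via the boundary computation $\partial G=\mathbf{b}-\mathbf{a}\left(q\right)$. Your packaging through the common value $V$ is only a cosmetic reorganization of the paper's two-direction argument.
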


\begin{proof}
If $G$ is an optimal allocation path, then by (\ref{equalityTM}),
\begin{equation*}
\mathbf{M}_{\alpha }\left( G\right) =\min \left\{ \mathbf{T}_{\alpha }\left(
\tilde{q}\right) :\tilde{q}\in Plan\left[ \mathbf{x},\mathbf{b}\right]
\right\} .
\end{equation*}%
Since $\left( G,q\right) $ is compatible, $\mathbf{T}_{\alpha }\left(
q\right) \leq \mathbf{M}_{\alpha }\left( G\right) $. Thus,
\begin{equation*}
\mathbf{T}_{\alpha }\left( q\right) =\min \left\{ \mathbf{T}_{\alpha }\left(
\tilde{q}\right) :\tilde{q}\in Plan\left[ \mathbf{x},\mathbf{b}\right]
\right\} =\mathbf{M}_{\alpha }\left( G\right) ,
\end{equation*}%
and $q$ is an optimal allocation plan.

Suppose $q$ is an optimal allocation plan with $\mathbf{T}_{\alpha }\left(
q\right) =\mathbf{M}_{\alpha }\left( G\right) $, then
\begin{equation*}
\mathbf{M}_{\alpha }\left( G\right) =\mathbf{T}_{\alpha }\left( q\right)
=\min \left\{ \mathbf{T}_{\alpha }\left( \tilde{q}\right) :\tilde{q}\in Plan%
\left[ \mathbf{x},\mathbf{b}\right] \right\} .
\end{equation*}%
By (\ref{equalityTM}), $G$ is an optimal allocation path.
\end{proof}

The next lemma presents a key property of an optimal allocation path.

\begin{lemma}
\label{decomposition_lemma}Let $G\in Path\left[ \mathbf{x},\mathbf{b}\right]
$ be an optimal allocation path from $\mathbf{x}$ to $\mathbf{b}$. Then, for
any $i\neq s\in \left\{ 1,\cdots ,k\right\} $, $x_{i}$ and $x_{s}$ do not
belong to the same connected component of $G$.
\end{lemma}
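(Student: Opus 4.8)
The plan is to argue by contradiction through a local perturbation of $G$ that shifts production between the two factories along the path joining them, the decisive input being the strict concavity of $w\mapsto w^{\alpha}$ for $\alpha\in(0,1)$.

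Suppose, for contradiction, that $x_{i}$ and $x_{s}$ lie in the same connected component of $G$. Since (by our standing convention) $G$ contains no cycles, that component is a tree, so there is a \emph{unique} simple path $\pi$ joining $x_{i}$ to $x_{s}$; orient $\pi$ from $x_{i}$ to $x_{s}$ and write its edges as $e_{1},\dots ,e_{r}$ with $r\geq 1$. For each $t$ let $\ell _{t}$ be the length of $e_{t}$ and let $s_{t}$ be the \emph{signed} weight of $e_{t}$, namely the $G$-weight $w(e_{t})$ carried with a $+$ sign if $G$ orients $e_{t}$ along $\pi$ and a $-$ sign otherwise; in particular $|s_{t}|=w(e_{t})>0$. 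The key point, which is exactly where the endogeneity of production is used, is that we are free to reroute mass between the two factories: for $\theta$ in a suitable range we form the competitor $G_{\theta}:=G+\theta\,\pi$ as polyhedral chains.

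Since $\partial (\theta \pi )=\theta (\delta _{x_{s}}-\delta _{x_{i}})$, we get $\partial G_{\theta}=\mathbf{b}-\mathbf{a}_{\theta}$ with $\mathbf{a}_{\theta}=\mathbf{a}+\theta (\delta _{x_{i}}-\delta _{x_{s}})$; that is, production is modified by $m_{i}\mapsto m_{i}+\theta$ and $m_{s}\mapsto m_{s}-\theta$, all other factories unchanged. This keeps total mass equal to $1$ and all masses nonnegative exactly for $\theta \in \lbrack -m_{i},m_{s}]$, so $G_{\theta}\in Path\left[ \mathbf{x},\mathbf{b}\right] $ for every such $\theta$ (it only reweights edges already present in $G$, hence stays cycle-free and admissible). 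Because only the edges of $\pi$ change weight, its cost is
\begin{equation*}
\phi (\theta ):=\mathbf{M}_{\alpha }\left( G_{\theta}\right) =C+\sum_{t=1}^{r}\left\vert s_{t}+\theta \right\vert ^{\alpha }\ell _{t},\qquad C=\sum_{e\notin \pi }w(e)^{\alpha }\,\ell (e).
\end{equation*}
Optimality of $G$ forces $\phi (\theta )\geq \phi (0)$ on $[-m_{i},m_{s}]$, so $\theta =0$ is a global minimizer of $\phi$; and since $m_{i},m_{s}>0$, it is an \emph{interior} minimizer. As every $s_{t}\neq 0$, $\phi$ is $C^{2}$ near $0$ with
\begin{equation*}
\phi ''(0)=\alpha (\alpha -1)\sum_{t=1}^{r}\left\vert s_{t}\right\vert ^{\alpha -2}\ell _{t},
\end{equation*}
which is \emph{strictly} negative for $\alpha \in (0,1)$ because $\sum_{t}\ell _{t}\geq \left\Vert x_{i}-x_{s}\right\Vert >0$. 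A twice-differentiable function with negative second derivative cannot attain a local minimum at an interior point, a contradiction. Hence no two factories lie in a common component.

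The step I expect to be the main obstacle is the bookkeeping that makes $G_{\theta}$ a \emph{legitimate} competing allocation path: the boundary computation $\partial G_{\theta}=\mathbf{b}-\mathbf{a}_{\theta}$, the identification of the admissible range $\theta \in \lbrack -m_{i},m_{s}]$, and the resulting formula for $\phi$. This is precisely the place where the freedom to move mass between $x_{i}$ and $x_{s}$ is essential; for \emph{fixed} production the perturbation would be inadmissible, and indeed the conclusion then fails (the ``Y-shaped'' optimal path connects two sources). I would treat the degenerate case $\alpha =0$ separately, since there $w\mapsto w^{\alpha}$ is no longer strictly concave and the smooth second-order argument above does not apply.
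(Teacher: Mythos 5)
Your argument for $\alpha\in(0,1)$ is correct and follows the same basic strategy as the paper's proof: assume $x_{i}$ and $x_{s}$ lie in one component, perturb $G$ by a multiple of the unique connecting polyhedral curve, and contradict optimality via concavity of $w\mapsto w^{\alpha}$. The difference is in how concavity is exploited. The paper avoids calculus entirely: it fixes the finite perturbation size $\lambda=\min_{t}w\left( e_{t}\right) $ and uses the midpoint inequality $\mathbf{M}_{\alpha }\left( G_{\lambda }\right) +\mathbf{M}_{\alpha }\left( G_{-\lambda }\right) -2\mathbf{M}_{\alpha }\left( G\right) <0$, so that at least one of the two competitors is strictly cheaper; you instead run a second-order interior-minimum test at $\theta =0$. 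Your version is in one respect the more careful one: you derive the admissible range $\theta \in \left[ -m_{i},m_{s}\right] $ from nonnegativity of the perturbed \emph{productions} (exactly where the endogeneity of $\mathbf{a}$ enters), reading a negative coefficient $s_{t}+\theta <0$ as a reversed edge of weight $\left\vert s_{t}+\theta \right\vert $, which is legitimate at the level of polyhedral chains and preserves the balance equation. The paper's choice $t=\pm \lambda $ instead guarantees nonnegative edge weights but leaves the constraint relating $\lambda $ to $m_{i},m_{s}$ implicit. Since your local argument only needs admissibility of $G_{\theta }$ for $\theta $ in a neighborhood of $0$, and $m_{i},m_{s}>0$ there, this part of your proof is sound.

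The one genuine gap is $\alpha =0$, which the lemma must cover since the entire paper works with $\alpha \in \lbrack 0,1)$ (see Problem \ref{Problem}); you explicitly defer this case without supplying an argument, and your method cannot be patched by a limiting remark: for $\alpha =0$ your $\phi $ degenerates to $C+\sum_{t:\,s_{t}+\theta \neq 0}length\left( e_{t}\right) $, which is locally constant near $\theta =0$, so no infinitesimal analysis detects anything. A strict improvement occurs only at values $\theta =-s_{t_{0}}$ at which an edge is annihilated, and one must verify such a value is admissible; in your mass-constrained formulation this requires $-s_{t_{0}}\in \left[ -m_{i},m_{s}\right] $ for some $t_{0}$, which does not follow automatically and needs a further argument. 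The paper handles $\alpha =0$ together with $\alpha \in (0,1)$ by the two-sided perturbation: with $t=\pm \lambda $, every minimal-weight edge vanishes in one of $G_{\pm \lambda }$ and has its weight doubled in the other, giving $\mathbf{M}_{0}\left( G_{\lambda }\right) +\mathbf{M}_{0}\left( G_{-\lambda }\right) -2\mathbf{M}_{0}\left( G\right) =-\sum \left\{ length\left( e_{t}\right) :w\left( e_{t}\right) =\lambda \right\} <0$. To complete your proof you should either incorporate this finite two-sided comparison for $\alpha =0$ or give a separate cut-and-rebalance argument (delete one edge of the connecting path; each resulting component of the tree still contains a factory, and the flows rebalance uniquely on a tree, strictly reducing total length); simply flagging the case is not a proof.
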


\begin{proof}
Assume $x_{i}$ and $x_{s}$ belong to the same connected component of an
optimal allocation path $G=\left\{ V\left( G\right) ,E\left( G\right)
,w:E\left( G\right) \rightarrow \left( 0,+\infty \right) \right\} $, then
there exists a polyhedra curve $\gamma $ supported on $G$ from $x_{i}$ to $%
x_{s}$. We may list edges of $\gamma $ as
\begin{equation*}
\left\{ \varepsilon _{1}e_{1},\cdots ,\varepsilon _{n}e_{n}\right\} \text{
with }\varepsilon _{i}=\pm 1\text{ and }e_{i}\in E\left( G\right) \text{.}
\end{equation*}%
Here, $\varepsilon _{i}=1$ (or $-1$) if $e_{i}$ has the same (or opposite)
direction as $\gamma $. Let
\begin{equation*}
\lambda =\min_{1\leq i\leq n}w\left( e_{i}\right) >0,
\end{equation*}%
and consider $G_{t}:=G+t\gamma $ for $t=\pm \lambda $. Note that $G_{t}$ is
still in $Path\left[ \mathbf{x},\mathbf{b}\right] $, and%
\begin{equation*}
\mathbf{M}_{\alpha }\left( G_{t}\right) -\mathbf{M}_{\alpha }\left( G\right)
=\sum_{i=1}^{n}\left[ \left( w\left( e_{i}\right) +t\varepsilon _{i}\right)
^{\alpha }-\left( w\left( e_{i}\right) \right) ^{\alpha }\right]
length\left( e_{i}\right) .
\end{equation*}%
Thus,
\begin{eqnarray*}
&&\mathbf{M}_{\alpha }\left( G_{\lambda }\right) +\mathbf{M}_{\alpha }\left(
G_{-\lambda }\right) -2\mathbf{M}_{\alpha }\left( G\right) \\
&=&\sum_{i=1}^{n}\left[ \left( w\left( e_{i}\right) +\lambda \right)
^{\alpha }+\left( w\left( e_{i}\right) -\lambda \right) ^{\alpha }-2\left(
w\left( e_{i}\right) \right) ^{\alpha }\right] length\left( e_{i}\right) .
\end{eqnarray*}%
When $\alpha \in \left( 0,1\right) $, by the strict concavity of $x^{\alpha
} $, we have
\begin{equation*}
\left( w\left( e_{i}\right) +\lambda \right) ^{\alpha }+\left( w\left(
e_{i}\right) -\lambda \right) ^{\alpha }-2\left( w\left( e_{i}\right)
\right) ^{\alpha }<0.
\end{equation*}%
When $\alpha =0$,
\begin{eqnarray*}
&&\sum_{i=1}^{n}\left[ \left( w\left( e_{i}\right) +\lambda \right) ^{\alpha
}+\left( w\left( e_{i}\right) -\lambda \right) ^{\alpha }-2\left( w\left(
e_{i}\right) \right) ^{\alpha }\right] length\left( e_{i}\right) \\
&=&-\sum \left\{ length\left( e_{i}\right) :w\left( e_{i}\right) =\lambda
\right\} <0.
\end{eqnarray*}%
Thus, when $\alpha \in \lbrack 0,1)$, we have
\begin{equation*}
\mathbf{M}_{\alpha }\left( G_{\lambda }\right) +\mathbf{M}_{\alpha }\left(
G_{-\lambda }\right) -2\mathbf{M}_{\alpha }\left( G\right) <0.
\end{equation*}%
i.e.
\begin{equation*}
\min \left\{ \mathbf{M}_{\alpha }\left( G_{\lambda }\right) ,\mathbf{M}%
_{\alpha }\left( G_{-\lambda }\right) \right\} <\mathbf{M}_{\alpha }\left(
G\right) ,
\end{equation*}%
a contradiction with the optimality of $G$ in $Path\left[ \mathbf{x},\mathbf{%
b}\right] $.
\end{proof}

The above lemma says that no two factories will be connected on any optimal
allocation path. Alternatively speaking, on an optimal allocation path, each
single household will receive her commodity from only one factory, i.e.,
each household is assigned to one factory. This result is attributed to the
transport economy of scale underlying ramified transportation technology. As
seen in Section 2, an $\alpha \in \lbrack 0,1)$ implies the existence of
transport economy of scale with transporting in groups being more cost
efficient than transporting separately. Any allocation path on which some
single household receives commodity from two factories can not be optimal
because the planner would be able to reduce transportation cost by
transferring production of one factory to the other. This transfer makes the
benefit of transport economy of scale more likely to be realized as
commodity for this household is transported in a larger scale on the path.

The result that each household is assigned to one factory on an optimal
allocation path motivates the following notion of assignment map.

\begin{definition}
An \textit{assignment map is a function }$S:\left\{ 1,\cdots ,\ell \right\}
\rightarrow \left\{ 1,\cdots ,k\right\} $. Let $Map\left[ \ell ,k\right] $
be the set of all \textit{assignment maps}. For any assignment map $S\in Map%
\left[ \ell ,k\right] $ and $\alpha \in \lbrack 0,1)$, define
\begin{equation*}
\mathbf{E}_{\alpha }\left( S;\mathbf{x},\mathbf{b}\right)
:=\sum_{i=1}^{k}d_{\alpha }\left( \mathbf{a}_{i}\mathbf{,b}_{i}\right) ,
\end{equation*}%
where $d_{\alpha }$ is the metric defined in (\ref{d_alpha}),%
\begin{equation}
\mathbf{a}_{i}=\left( \sum_{j\in S^{-1}\left( i\right) }n_{j}\right) \delta
_{x_{i}}\text{ and }\mathbf{b}_{i}=\sum_{j\in S^{-1}\left( i\right)
}n_{j}\delta _{y_{i}}.  \label{a_b_i}
\end{equation}%
An assignment map $S^{\ast }\in Map\left[ \ell ,k\right] $ is optimal if
\begin{equation*}
\mathbf{E}_{\alpha }\left( S^{\ast };\mathbf{x},\mathbf{b}\right) \leq
\mathbf{E}_{\alpha }\left( S;\mathbf{x},\mathbf{b}\right) \text{ for any }%
S\in Map\left[ \ell ,k\right] .
\end{equation*}
\end{definition}

Using the concept of assignment maps, Lemma \ref{decomposition_lemma}
provides a partition result for an optimal allocation path.

\begin{proposition}
\label{Path_map}Let $G$ be an optimal allocation path from $\mathbf{x}$ to $%
\mathbf{b}$. Then, there exists an assignment map $S\in Map\left[ \ell ,k%
\right] $ such that
\begin{equation*}
\mathbf{E}_{\alpha }\left( S;\mathbf{x},\mathbf{b}\right) =\mathbf{M}%
_{\alpha }\left( G\right) .
\end{equation*}%
Moreover, for $\mathbf{a}_{i}$ and $\mathbf{b}_{i}$ given in (\ref{a_b_i}),
the path $G$ can be decomposed into the sum of $k$ pairwise disjoint
transport paths
\begin{equation}
G=\sum_{i=1}^{k}G_{i}\text{ with }\mathbf{M}_{\alpha }\left( G\right)
=\sum_{i=1}^{k}\mathbf{M}_{\alpha }\left( G_{i}\right) ,  \label{equation_G}
\end{equation}%
where each $G_{i}\in Path\left( \mathbf{a}_{i},\mathbf{b}_{i}\right) $ is an
optimal transport path with $\mathbf{M}_{\alpha }\left( G_{i}\right)
=d_{\alpha }\left( \mathbf{a}_{i}\mathbf{,b}_{i}\right) .$
\end{proposition}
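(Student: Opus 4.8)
The plan is to read off the assignment map $S$ and the decomposition directly from the connected-component structure of $G$, using Lemma \ref{decomposition_lemma} as the one essential structural input. First I would split $G$ into its connected components. Since all transport paths under consideration are acyclic, each component is a tree, and by Lemma \ref{decomposition_lemma} no component can contain two distinct factories $x_i,x_s$ with $i\neq s$. So each component carries \emph{at most} one factory.

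Next I would pin down where the households sit. Restricting the boundary identity $\partial G=\mathbf{b}-\mathbf{a}$ to a single component $C$ and summing the balance equation (\ref{balance}) over the vertices of $C$ gives $\sum_{x_i\in C}m_i-\sum_{y_j\in C}n_j=0$, i.e. total supply equals total demand inside $C$. A factory-free component would then force $\sum_{y_j\in C}n_j=0$, which is impossible once any $n_j>0$; hence a factory-free component contains no household. Together with the at-most-one-factory property, this shows every household $y_j$ lies in a unique component containing exactly one factory. I would then define $S(j)=i$ precisely when $y_j$ lies in the component of $x_i$; this gives a well-defined $S\in Map[\ell,k]$, and the same balance computation yields $m_i=\sum_{j\in S^{-1}(i)}n_j$, matching the measure $\mathbf{a}_i$ in (\ref{a_b_i}).

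For the decomposition I would take $G_i$ to be the connected component containing $x_i$ (and the trivial path when $m_i=0$). Vertex-disjointness of the components makes the $G_i$ pairwise disjoint, and since each edge of $G$ belongs to exactly one component we get $G=\sum_{i=1}^k G_i$ as polyhedral chains. Restricting $\partial G=\mathbf{b}-\mathbf{a}$ to the component of $x_i$ shows $\partial G_i=\mathbf{b}_i-\mathbf{a}_i$, so $G_i\in Path(\mathbf{a}_i,\mathbf{b}_i)$; and because $\mathbf{M}_\alpha$ is a sum over edges in (\ref{M_a_cost}) it is additive over disjoint pieces, giving $\mathbf{M}_\alpha(G)=\sum_{i=1}^k\mathbf{M}_\alpha(G_i)$.

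The last and most substantive point is the optimality of each piece, which I would settle by a cut-and-paste contradiction. If some $G_i$ were not $d_\alpha$-optimal, i.e. $\mathbf{M}_\alpha(G_i)>d_\alpha(\mathbf{a}_i,\mathbf{b}_i)$, I would choose an optimal $G_i'\in Path(\mathbf{a}_i,\mathbf{b}_i)$; since $G_i'$ has the same boundary $\mathbf{b}_i-\mathbf{a}_i$, the spliced chain $G'=G_i'+\sum_{s\neq i}G_s$ again lies in $Path[\mathbf{x},\mathbf{b}]$, yet additivity forces $\mathbf{M}_\alpha(G')<\mathbf{M}_\alpha(G)$, contradicting the optimality of $G$ in $Path[\mathbf{x},\mathbf{b}]$. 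Hence $\mathbf{M}_\alpha(G_i)=d_\alpha(\mathbf{a}_i,\mathbf{b}_i)$ for every $i$, and summing gives $\mathbf{M}_\alpha(G)=\sum_{i=1}^k d_\alpha(\mathbf{a}_i,\mathbf{b}_i)=\mathbf{E}_\alpha(S;\mathbf{x},\mathbf{b})$. I expect the real care to be in the component bookkeeping of the second paragraph—verifying that each household shares its component with exactly one factory and that the resulting masses coincide with $\mathbf{a}_i$—rather than in any single delicate estimate, since the optimality step reduces cleanly to edge-additivity plus the optimality of $G$.
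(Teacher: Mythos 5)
Your proof is correct and follows essentially the same route as the paper's: decompose $G$ into connected components via Lemma \ref{decomposition_lemma}, define $S(j)=i$ according to which factory's component contains $y_j$, use edge-additivity of $\mathbf{M}_{\alpha}$ over the disjoint pieces, and deduce optimality of each $G_i$ from the optimality of $G$. Your balance-equation bookkeeping (ruling out factory-free components with households) and the explicit cut-and-paste contradiction simply spell out steps the paper compresses into ``$y_j$ is clearly connected to some $x_i$'' and ``each $G_i$ must also be optimal.''
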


\begin{proof}
Let $G$ be an optimal allocation path from $\mathbf{x}$ to $\mathbf{b}$. By
Lemma \ref{decomposition_lemma}, each $x_{i}$ determines a connected
component $G_{i}$ of $G$. Thus,%
\begin{equation}
G=\sum_{i=1}^{k}G_{i}\text{ with }\mathbf{M}_{\alpha }\left( G\right)
=\sum_{i=1}^{k}\mathbf{M}_{\alpha }\left( G_{i}\right) .
\label{equation(M_G)}
\end{equation}

For each $j\in \left\{ 1,\cdots ,\ell \right\} $, $y_{j}$ is clearly
connected to some $x_{i}$ on $G$. By Lemma \ref{decomposition_lemma}, such
an $x_{i}$ must be unique for each $j$. Thus, we may define
\begin{equation*}
S\left( j\right) =i,\text{ if }y_{j}\text{ is connected to }x_{i}\text{ on }%
G.
\end{equation*}%
This defines an assignment map $S\in Map\left[ \ell ,k\right] $. Also note
that $G_{i}$ is a transport path from $\mathbf{a}_{i}$ to $\mathbf{b}_{i}$,
where $\mathbf{a}_{i},\mathbf{b}_{i}$ are given in (\ref{a_b_i}). Since $G$
is optimal as in (\ref{equalityTM}), each $G_{i}\in Path\left( \mathbf{a}%
_{i},\mathbf{b}_{i}\right) $ must also be optimal, and hence
\begin{equation*}
\mathbf{M}_{\alpha }\left( G_{i}\right) =d_{\alpha }\left( \mathbf{a}_{i},%
\mathbf{b}_{i}\right) .
\end{equation*}%
Therefore,
\begin{equation*}
\mathbf{M}_{\alpha }\left( G\right) =\sum_{i=1}^{k}\mathbf{M}_{\alpha
}\left( G_{i}\right) =\sum_{i=1}^{k}d_{\alpha }\left( \mathbf{a}_{i},\mathbf{%
b}_{i}\right) =\mathbf{E}_{\alpha }\left( S;\mathbf{x},\mathbf{b}\right) .
\end{equation*}
\end{proof}

\begin{figure}[h]
\label{G_1} \centering
\includegraphics[width=0.6\textwidth]{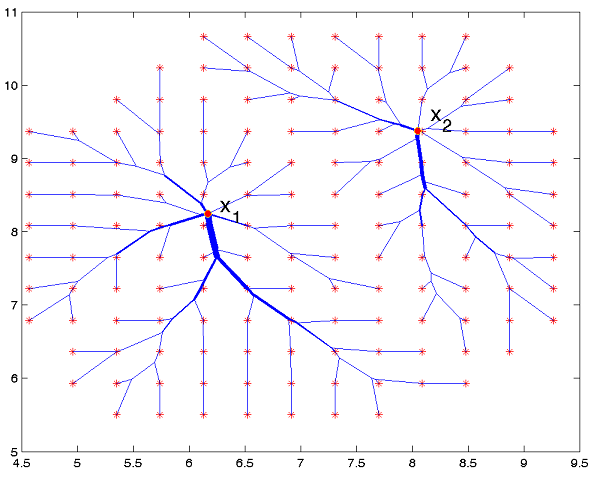}
\caption{A partition in households $\mathbf{b}$ and allocation path $G.$}
\label{partition}
\end{figure}

Figure \ref{partition} illustrates a partition result under an optimal
assignment map. This spatial economy consists of factories $\left\{
x_{1},x_{2}\right\} $ and households $\left\{ y_{1},\cdots ,y_{145}\right\} $
with equal demand $n_{j}=\frac{1}{145}$. The figure shows a clear partition
in both the allocation path and households: First, the allocation path is
decomposed into two disjoint sub-transport paths originating from factory $%
x_{1}$ and $x_{2}$ respectively. Second, the $145$ households are divided
into two unconnected populations centered around the factory from which they
receive the commodity.

Note that any assignment map $S\in Map\left[ \ell ,k\right] $ provides a
partition among households
\begin{equation*}
\mathbf{b=}\sum_{i=1}^{k}\mathbf{b}_{i},
\end{equation*}%
where all households in $\mathbf{b}_{i}$ given in (\ref{a_b_i}) are assigned
to a single factory located at $x_{i}$. For each $i$, let $G_{i}\in
Path\left( \mathbf{a}_{i}\mathbf{,b}_{i}\right) $ be an optimal transport
path which transports the commodity produced at factory $i$ to households in
$\mathbf{b}_{i}.$ This yields an allocation path from $\mathbf{x}$ to $%
\mathbf{b}$
\begin{equation}
G_{S}=\sum_{i=1}^{k}G_{i}\in Path\left[ \mathbf{x,b}\right] ,  \label{G_S}
\end{equation}%
which is compatible with the allocation plan $q_{S}$ given in (\ref{q_S}).
Thus,
\begin{equation}
\mathbf{E}_{\alpha }\left( S;\mathbf{x},\mathbf{b}\right)
=\sum_{i=1}^{k}d_{\alpha }\left( \mathbf{a}_{i}\mathbf{,b}_{i}\right)
=\sum_{i=1}^{k}\mathbf{M}_{\alpha }\left( G_{i}\right) \geq \mathbf{M}%
_{\alpha }\left( G_{S}\right) \geq \mathbf{T}_{\alpha }\left( q_{S}\right) .
\label{E_energy_greater_than_T_energy}
\end{equation}%
By Proposition \ref{Path_map}, any optimal allocation path is in the form of
(\ref{G_S}) with respect to its associated assignment map.

Now, we state the main results of this section as follows:

\begin{theorem}
\label{Theorem 1}Given a subset $\mathbf{x}=\left\{ x_{1},x_{2,\cdots
,}x_{k}\right\} $ in $X$, an atomic probability measure $\mathbf{b}$ as in (%
\ref{households}), and a parameter $\alpha \in \lbrack 0,1)$.

\begin{enumerate}
\item \label{map_plan}An allocation plan $q\in Plan\left[ \mathbf{x},\mathbf{%
b}\right] $ is optimal if and only if there exists an optimal assignment map
$S\in Map\left[ \ell ,k\right] $ such that $q=q_{S}$.

\item \label{map_path}An allocation path $G\in Path\left[ \mathbf{x},\mathbf{%
b}\right] $ is optimal if and only if there exists an optimal assignment map
$S\in Map\left[ \ell ,k\right] $ such that $G=G_{S}$ for some $G_{S}$ of $S$
defined in (\ref{G_S}).

\item Moreover,
\begin{equation}
\min_{q\in Plan\left[ \mathbf{x},\mathbf{b}\right] }\text{ }\mathbf{T}%
_{\alpha }\left( q\right) =\min_{S\in Map\left[ \ell ,k\right] }\mathbf{E}%
_{\alpha }\left( S;\mathbf{x},\mathbf{b}\right) =\min_{G\in Path\left[
\mathbf{x},\mathbf{b}\right] }\mathbf{M}_{\alpha }\left( G\right) .
\label{two_min_equal}
\end{equation}
\end{enumerate}
\end{theorem}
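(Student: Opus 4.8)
The plan is to establish the three-way equality in part (3) first, and then to use it, together with the inequality chain (\ref{E_energy_greater_than_T_energy}), to settle the two equivalences in parts (\ref{map_plan}) and (\ref{map_path}).

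For part (3), the equality $\min_{q}\mathbf{T}_{\alpha}(q)=\min_{G}\mathbf{M}_{\alpha}(G)$ is already recorded in (\ref{equalityTM}), so it only remains to insert $\min_{S}\mathbf{E}_{\alpha}(S;\mathbf{x},\mathbf{b})$ between them. For the lower bound, I would take any $S\in Map[\ell,k]$ and read off from (\ref{E_energy_greater_than_T_energy}) that $\mathbf{E}_{\alpha}(S;\mathbf{x},\mathbf{b})\geq \mathbf{T}_{\alpha}(q_{S})\geq \min_{q}\mathbf{T}_{\alpha}(q)$; minimizing over $S$ yields $\min_{S}\mathbf{E}_{\alpha}(S;\mathbf{x},\mathbf{b})\geq \min_{q}\mathbf{T}_{\alpha}(q)$. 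For the matching upper bound, I would choose an optimal allocation path $G^{\ast}$ (which exists by (\ref{equalityTM}) together with Proposition \ref{existence}) and apply Proposition \ref{Path_map} to obtain an assignment map $S$ with $\mathbf{E}_{\alpha}(S;\mathbf{x},\mathbf{b})=\mathbf{M}_{\alpha}(G^{\ast})=\min_{G}\mathbf{M}_{\alpha}(G)$, hence $\min_{S}\mathbf{E}_{\alpha}(S;\mathbf{x},\mathbf{b})\leq \min_{G}\mathbf{M}_{\alpha}(G)$. Combining the two bounds with (\ref{equalityTM}) closes the three-way equality (\ref{two_min_equal}).

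With part (3) available, part (\ref{map_plan}) follows by squeezing the chain (\ref{E_energy_greater_than_T_energy}). For the ``if'' direction, given an optimal $S$ the chain $\mathbf{E}_{\alpha}(S;\mathbf{x},\mathbf{b})\geq \mathbf{M}_{\alpha}(G_{S})\geq \mathbf{T}_{\alpha}(q_{S})\geq \min_{q}\mathbf{T}_{\alpha}(q)$ has equal first and last terms, by optimality of $S$ and part (3); this forces $\mathbf{T}_{\alpha}(q_{S})=\min_{q}\mathbf{T}_{\alpha}(q)$, so $q_{S}$ is optimal. For the ``only if'' direction, given an optimal plan $q$, I would take a compatible path $G_{q}$ with $\mathbf{T}_{\alpha}(q)=\mathbf{M}_{\alpha}(G_{q})$ from (\ref{planpath}); by Lemma \ref{Path_Plan_Lemma} this $G_{q}$ is an optimal allocation path, so Lemma \ref{decomposition_lemma} guarantees that each household $y_{j}$ is connected to a single factory $x_{S(j)}$, which defines an assignment map $S$. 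The compatibility condition $q_{ij}=0$ whenever $g_{ij}=0$ then forces $q_{ij}=0$ for $i\neq S(j)$, and the margin constraint $\sum_{i}q_{ij}=n_{j}$ forces $q_{S(j)j}=n_{j}$, giving $q=q_{S}$; optimality of $S$ follows from $\mathbf{E}_{\alpha}(S;\mathbf{x},\mathbf{b})=\mathbf{M}_{\alpha}(G_{q})=\min_{S}\mathbf{E}_{\alpha}(S;\mathbf{x},\mathbf{b})$ via Proposition \ref{Path_map} and part (3).

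Part (\ref{map_path}) is handled in the same spirit. For ``if'', an optimal $S$ gives $\mathbf{E}_{\alpha}(S;\mathbf{x},\mathbf{b})\geq \mathbf{M}_{\alpha}(G_{S})\geq \min_{G}\mathbf{M}_{\alpha}(G)=\mathbf{E}_{\alpha}(S;\mathbf{x},\mathbf{b})$, so $\mathbf{M}_{\alpha}(G_{S})=\min_{G}\mathbf{M}_{\alpha}(G)$ and $G_{S}$ is optimal. For ``only if'', an optimal $G$ is decomposed by Proposition \ref{Path_map} into $G=\sum_{i=1}^{k}G_{i}$ with each $G_{i}$ optimal in $Path(\mathbf{a}_{i},\mathbf{b}_{i})$ and an associated assignment map $S$ satisfying $\mathbf{E}_{\alpha}(S;\mathbf{x},\mathbf{b})=\mathbf{M}_{\alpha}(G)$; this $S$ is optimal by part (3), and the displayed decomposition is exactly of the form $G_{S}$ in (\ref{G_S}). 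The single genuinely structural step---everything else being inequality bookkeeping---is the ``only if'' part of (\ref{map_plan}), where the topological fact that each household attaches to a unique factory on $G_{q}$ must be converted into the algebraic identity $q=q_{S}$; this is where the compatibility definition and the margin constraints do the real work, and I expect it to be the main point requiring care.
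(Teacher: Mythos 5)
Your proof is correct and follows essentially the same route as the paper: it relies on the identical ingredients --- (\ref{equalityTM}), the chain (\ref{E_energy_greater_than_T_energy}), Lemma \ref{Path_Plan_Lemma}, Lemma \ref{decomposition_lemma}, Proposition \ref{Path_map}, and the compatibility-plus-margins argument forcing $q=q_{S}$ --- with only a cosmetic reorganization. The paper obtains the first equality in (\ref{two_min_equal}) as a byproduct inside the ``only if'' direction of part (\ref{map_plan}), whereas you establish the full three-way equality up front via the sandwich $\min_{S}\mathbf{E}_{\alpha}\geq\min_{q}\mathbf{T}_{\alpha}=\min_{G}\mathbf{M}_{\alpha}\geq\min_{S}\mathbf{E}_{\alpha}$ and then deduce both equivalences from it; the logical content is the same.
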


\begin{proof}
We first show the equivalence in (\ref{map_plan}):

\textquotedblleft $\Longrightarrow $\textquotedblright . Suppose $q\in Plan%
\left[ \mathbf{x},\mathbf{b}\right] $ is an optimal allocation plan. Let $%
G\in Path\left[ \mathbf{x},\mathbf{b}\right] $ be an allocation path that is
compatible with $q$ and $\mathbf{M}_{\alpha }\left( G\right) =\mathbf{T}%
_{\alpha }\left( q\right) .$ Since $q$ is an optimal allocation plan, by
Lemma \ref{Path_Plan_Lemma}, $G$ is an optimal allocation path. By
Proposition \ref{Path_map}, there exists an assignment map $S$ such that
\begin{equation}
\mathbf{E}_{\alpha }\left( S;\mathbf{x},\mathbf{b}\right) =\mathbf{M}%
_{\alpha }\left( G\right) =\mathbf{T}_{\alpha }\left( q\right) .
\label{E_M_T}
\end{equation}%
Since $\left( G,q\right) $ is compatible, by (\ref{compatible_pair}), we may
express
\begin{equation*}
G=\sum_{i=1}^{k}\sum_{j=1}^{\ell }q_{ij}\cdot g_{ij}.
\end{equation*}%
For each $j$, by Lemma \ref{decomposition_lemma}, we have $g_{sj}=0$
whenever $s\neq S\left( j\right) $. By the compatibility of $\left(
G,q\right) $, we have $q_{sj}=0$ whenever $s\neq S\left( j\right) $. Thus,
for $i=S\left( j\right) \,$,

\begin{equation*}
q_{ij}=\sum_{s=1}^{k}q_{sj}=n_{j}\text{, for each }j\text{.}
\end{equation*}%
This shows that $q=q_{S}$.

For any $\tilde{S}\in Map\left[ \ell ,k\right] $, by (\ref%
{E_energy_greater_than_T_energy}) and (\ref{E_M_T}),
\begin{equation*}
\mathbf{E}_{\alpha }\left( \tilde{S};\mathbf{x},\mathbf{b}\right) \geq
\mathbf{T}_{\alpha }\left( q_{\tilde{S}}\right) \geq \mathbf{T}_{\alpha
}\left( q\right) =\mathbf{E}_{\alpha }\left( S;\mathbf{x},\mathbf{b}\right) .
\end{equation*}%
Thus, $S$ is an optimal assignment map. Since $\mathbf{E}_{\alpha }\left( S;%
\mathbf{x},\mathbf{b}\right) =\mathbf{T}_{\alpha }\left( q\right) $, we have
the first equality in (\ref{two_min_equal}).

\textquotedblleft $\Longleftarrow $\textquotedblright . Suppose $S\in Map%
\left[ \ell ,k\right] $ is an optimal assignment map. Then, by (\ref%
{E_energy_greater_than_T_energy}) and the first equality in (\ref%
{two_min_equal}),
\begin{eqnarray*}
\mathbf{T}_{\alpha }\left( q_{S}\right) &\leq &\mathbf{E}_{\alpha }\left( S;%
\mathbf{x},\mathbf{b}\right) =\min \left\{ \mathbf{E}_{\alpha }\left( \tilde{%
S};\mathbf{x},\mathbf{b}\right) :\tilde{S}\in Map\left[ \ell ,k\right]
\right\} \\
&=&\min \text{ }\left\{ \mathbf{T}_{\alpha }\left( \tilde{q}\right) :\tilde{q%
}\in Plan\left[ \mathbf{x},\mathbf{b}\right] \right\} .
\end{eqnarray*}%
Therefore, $q_{S}\in Plan\left[ \mathbf{x},\mathbf{b}\right] $ is an optimal
allocation plan. This completes the proof of (\ref{map_plan}).

The second equality of (\ref{two_min_equal}) follows from the first equality
of (\ref{two_min_equal}) and (\ref{equalityTM}).

We now show the equivalence in (\ref{map_path}):

\textquotedblleft $\Longleftarrow $\textquotedblright\ Suppose $G=G_{S}$ for
some optimal assignment map $S\in Map\left[ \ell ,k\right] $, then
\begin{eqnarray*}
\mathbf{M}_{\alpha }\left( G_{S}\right) &\leq &\sum_{i=1}^{k}\mathbf{M}%
_{\alpha }\left( G_{i}\right) =\sum_{i=1}^{k}d_{\alpha }\left( \mathbf{a}_{i}%
\mathbf{,b}_{i}\right) =\mathbf{E}_{\alpha }\left( S;\mathbf{x},\mathbf{b}%
\right) \\
&=&\min_{\tilde{S}\in Map\left[ \ell ,k\right] }\mathbf{E}_{\alpha }\left(
\tilde{S};\mathbf{x},\mathbf{b}\right) =\min_{G\in Path\left[ \mathbf{x},%
\mathbf{b}\right] }\mathbf{M}_{\alpha }\left( G\right) \text{,}
\end{eqnarray*}%
by the optimality of $S$ and (\ref{two_min_equal}). Thus, $G=G_{S}$ is an
optimal allocation path.

\textquotedblleft $\Longrightarrow $\textquotedblright\ Suppose $G\in Path%
\left[ \mathbf{x},\mathbf{b}\right] $ is an optimal allocation path. By
Proposition \ref{Path_map}, $G=G_{S}$ for some $S\in Map\left[ \ell ,k\right]
$ with $\mathbf{E}_{\alpha }\left( S;\mathbf{x},\mathbf{b}\right) =\mathbf{M}%
_{\alpha }\left( G\right) $. Then, the optimality of $S$ follows from the
optimality of $G$ and (\ref{two_min_equal}).
\end{proof}

Theorem \ref{Theorem 1} shows that in the ramified optimal allocation
problem, there exists an equivalence between optimal allocation plan and
optimal assignment map. This result has an analogous counterpart in
Monge-Kantorovich problems, but has not been observed in current literature
on ramified transport problems. An implication of this theorem is that one
can instead search for an optimal assignment map in order to find an optimal
allocation plan. Each optimal assignment map $S\in Map\left[ \ell ,k\right] $
would give an optimal allocation plan $q_{S}\in Plan\left[ \mathbf{x},%
\mathbf{b}\right] $ as in (\ref{q_S}). For this consideration, the rest of
this paper will focus attention on characterizing the various properties of
optimal assignment maps.

\section{Properties of Optimal Assignment Maps via Marginal Analysis}

In this section, we develop a method of marginal transportation analysis and
use it to study the properties of optimal assignment maps.

We first formalize a concept of marginal transportation cost for a
single-source transport system. Let $G\in Path\left( \mathfrak{m}\left(
\mathbf{c}\right) \delta _{O}\mathbf{,c}\right) $ be a transport path from a
\textit{single source} $O$ to an atomic measure $\mathbf{c}$ of mass $%
\mathfrak{m}\left( \mathbf{c}\right) $. For any point $p$ on the support of $%
G$, we set
\begin{equation}
\theta \left( p\right) :=%
\begin{cases}
w(e), & \text{if }p\text{ is in the interior of some edge }e\in E\left(
G\right) \\
\mathfrak{m}\left( \mathbf{c}\right) , & \text{if }p=O \\
\sum_{e\in E(G),e^{+}=p}w(e), & \text{if }p\in V\left( G\right) \setminus
\left\{ O\right\}%
\end{cases}%
,  \label{density}
\end{equation}%
which represents the mass flowing through $p$, where $e^{+}$ denotes the
ending endpoint of edge $e\in E\left( G\right) $. Since $G$ has a single
source and contains no cycles, for any point $p$ on $G$, there exists a
unique polyhedral curve $\gamma _{p}$ on $G$ from $O$ to $p$. Moreover, for
any point $s\in \gamma _{p}$, it holds that
\begin{equation}
\mathfrak{m}\left( \mathbf{c}\right) \geq \theta \left( s\right) \geq \theta
\left( p\right) \text{.}  \label{density_comparison}
\end{equation}%
When the mass at $p$ changes by an amount $\Delta m$ with $\Delta m\geq -$ $%
\mathbf{\theta }\left( p\right) $, then the mass flowing through each point
of $\gamma _{p}$ also changes by $\Delta m$. As a result, the corresponding
increment of the transportation cost is
\begin{eqnarray}
\Delta C_{G}\left( p,\Delta m\right):&=&\mathbf{M}_{\alpha }\left( G+\left(
\Delta m\right) \gamma _{p}\right) -\mathbf{M}_{\alpha }\left( G\right)
\label{marginal_increment} \\
&=&\int_{\gamma _{p}}\left( \theta \left( s\right) +\Delta m\right) ^{\alpha
}-\left( \theta \left( s\right) \right) ^{\alpha }ds.  \notag
\end{eqnarray}%
The\textit{\ marginal transportation cost} at $p$ via $G$ is defined by
\begin{equation*}
MC_{G}\left( p\right) :=\lim_{\Delta m\rightarrow 0}\frac{\Delta C_{G}\left(
p,\Delta m\right) }{\Delta m}=\alpha \int_{\gamma _{p}}\left( \theta \left(
s\right) \right) ^{\alpha -1}ds.
\end{equation*}%
The following proposition establishes some properties of the function $%
\Delta C_{G}\left( p,\Delta m\right) $. Those properties are the key
elements of marginal transportation analysis used to study optimal
assignment maps later.

\begin{proposition}
\label{prop increment cost}For any $G\in Path\left( \mathfrak{m}\left(
\mathbf{c}\right) \delta _{O}\mathbf{,c}\right) $ and $p$ on $G$, we have
\begin{equation}
\Delta C_{G}\left( p,-\Delta m\right) =-\Delta C_{\tilde{G}}\left( p,\Delta
m\right) ,\text{ for }\Delta m\in \left[ -\theta \left( p\right) ,\theta
\left( p\right) \right] ,  \label{alternative}
\end{equation}%
where $\tilde{G}=G-\left( \Delta m\right) \gamma _{p}$. Moreover, for any $%
\Delta m\geq 0$, we have
\begin{equation}
\left[ \left( \mathfrak{m}\left( \mathbf{c}\right) +\Delta m\right) ^{\alpha
}-\mathfrak{m}\left( \mathbf{c}\right) ^{\alpha }\right] \int_{\gamma
_{p}}ds\leq \Delta C_{G}\left( p,\Delta m\right) \leq \left[ \left( \theta
\left( p\right) +\Delta m\right) ^{\alpha }-\theta \left( p\right) ^{\alpha }%
\right] \int_{\gamma _{p}}ds.  \label{incrementcost_comparison}
\end{equation}%
If, in addition, $G\in Path\left( \mathfrak{m}\left( \mathbf{c}\right)
\delta _{O}\mathbf{,c}\right) $ is optimal in (\ref{Ramified transport
problem}), then
\begin{equation}
d_{\alpha }\left( \left( \mathfrak{m}\left( \mathbf{c}\right) +\Delta
m\right) \delta _{O},\mathbf{c+}\left( \Delta m\right) \delta _{p}\right)
-d_{\alpha }\left( \mathfrak{m}\left( \mathbf{c}\right) \delta _{O},\mathbf{c%
}\right) \leq \Delta C_{G}\left( p,\Delta m\right)  \label{d_a_comparision}
\end{equation}%
for any $\Delta m\geq -$ $\mathbf{\theta }\left( p\right) $ with $p$ on $G$.
\end{proposition}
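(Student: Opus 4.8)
The plan is to treat each inequality as a statement about the scalar function

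$$\Delta C_G(p,\Delta m)=\int_{\gamma_p}\bigl[(\theta(s)+\Delta m)^\alpha-\theta(s)^\alpha\bigr]\,ds,$$

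exploiting the monotonicity of $\theta$ along $\gamma_p$ recorded in (\ref{density_comparison}) together with elementary convexity/concavity facts about $t\mapsto t^\alpha$.

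For the identity (\ref{alternative}), the idea is purely algebraic. Writing $\tilde G=G-(\Delta m)\gamma_p$, its density function is $\tilde\theta(s)=\theta(s)-\Delta m$ along $\gamma_p$, and the polyhedral curve from $O$ to $p$ is the same $\gamma_p$. Then $\Delta C_{\tilde G}(p,\Delta m)=\int_{\gamma_p}[(\tilde\theta(s)+\Delta m)^\alpha-\tilde\theta(s)^\alpha]\,ds=\int_{\gamma_p}[\theta(s)^\alpha-(\theta(s)-\Delta m)^\alpha]\,ds$, which is exactly $-\Delta C_G(p,-\Delta m)$. I would only need to check that the admissibility bound $\Delta m\in[-\theta(p),\theta(p)]$ keeps both integrands well defined (so that $\theta(s)-\Delta m\ge 0$ everywhere on $\gamma_p$), which follows because $\theta(s)\ge\theta(p)\ge\Delta m$ by (\ref{density_comparison}).

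The two-sided bound (\ref{incrementcost_comparison}) is the heart of the proposition. For $\Delta m\ge 0$ I would fix $s\in\gamma_p$ and compare the integrand $\phi(s):=(\theta(s)+\Delta m)^\alpha-\theta(s)^\alpha$ against its endpoint values. Here I would use that for fixed $\Delta m\ge 0$, the map $t\mapsto (t+\Delta m)^\alpha-t^\alpha$ is \emph{decreasing} in $t$ on $(0,\infty)$ (its derivative is $\alpha[(t+\Delta m)^{\alpha-1}-t^{\alpha-1}]\le 0$ since $\alpha-1<0$). Combining this monotonicity with $\theta(p)\le\theta(s)\le\mathfrak m(\mathbf c)$ from (\ref{density_comparison}) gives, pointwise on $\gamma_p$,

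$$\bigl(\mathfrak m(\mathbf c)+\Delta m\bigr)^\alpha-\mathfrak m(\mathbf c)^\alpha\;\le\;\phi(s)\;\le\;\bigl(\theta(p)+\Delta m\bigr)^\alpha-\theta(p)^\alpha.$$

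Integrating over $\gamma_p$, with $\int_{\gamma_p}ds$ factoring out of the constant bounds, yields (\ref{incrementcost_comparison}). The main obstacle is getting the direction of the monotonicity right and matching it to the ordering $\theta(p)\le\theta(s)\le\mathfrak m(\mathbf c)$: the \emph{smallest} density $\theta(p)$ produces the \emph{largest} marginal increment (upper bound), and the largest density $\mathfrak m(\mathbf c)$ the smallest (lower bound).

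Finally, inequality (\ref{d_a_comparision}) is a direct consequence once $G$ is optimal. The point is that $G+(\Delta m)\gamma_p$ is an \emph{admissible competitor} in $Path\bigl((\mathfrak m(\mathbf c)+\Delta m)\delta_O,\,\mathbf c+(\Delta m)\delta_p\bigr)$: adding the curve $\gamma_p$ with weight $\Delta m$ raises the source mass by $\Delta m$ and deposits $\Delta m$ at $p$, so the boundary becomes $\mathbf c+(\Delta m)\delta_p-(\mathfrak m(\mathbf c)+\Delta m)\delta_O$ exactly as required (admissibility of the new weights for $\Delta m\ge -\theta(p)$ is what the constraint guarantees). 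Hence $d_\alpha\bigl((\mathfrak m(\mathbf c)+\Delta m)\delta_O,\mathbf c+(\Delta m)\delta_p\bigr)\le\mathbf M_\alpha(G+(\Delta m)\gamma_p)$, while optimality of $G$ gives $\mathbf M_\alpha(G)=d_\alpha(\mathfrak m(\mathbf c)\delta_O,\mathbf c)$. Subtracting these two relations and using the definition (\ref{marginal_increment}) of $\Delta C_G(p,\Delta m)=\mathbf M_\alpha(G+(\Delta m)\gamma_p)-\mathbf M_\alpha(G)$ produces (\ref{d_a_comparision}) immediately. No substantial computation is needed here—only the recognition that the perturbed path is a feasible competitor for the perturbed endpoint data.
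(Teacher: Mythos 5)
Your proof is correct and takes essentially the same route as the paper's: the identical algebraic rewriting of $\Delta C_{G}\left( p,-\Delta m\right)$ for (\ref{alternative}), the same monotonicity of $t\mapsto \left( t+\Delta m\right) ^{\alpha }-t^{\alpha }$ combined with (\ref{density_comparison}) for (\ref{incrementcost_comparison}), and the same competitor $G+\left( \Delta m\right) \gamma _{p}\in Path\left( \left( \mathfrak{m}\left( \mathbf{c}\right) +\Delta m\right) \delta _{O},\mathbf{c+}\left( \Delta m\right) \delta _{p}\right) $ for (\ref{d_a_comparision}). Your explicit check that $\theta \left( s\right) -\Delta m\geq 0$ along $\gamma _{p}$ is a small well-definedness point the paper leaves implicit.
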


\begin{proof}
Let $\tilde{G}=G-\left( \Delta m\right) \gamma _{p}$, then when $\Delta m\in %
\left[ -\theta \left( p\right) ,\theta \left( p\right) \right] $,
\begin{eqnarray*}
\Delta C_{G}\left( p,-\Delta m\right) &=&\int_{\gamma _{p}}\left( \theta
\left( s\right) -\Delta m\right) ^{\alpha }-\left( \theta \left( s\right)
\right) ^{\alpha }ds \\
&=&-\int_{\gamma _{p}}\left[ \left( \left( \theta \left( s\right) -\Delta
m\right) +\Delta m\right) ^{\alpha }-\left( \theta \left( s\right) -\Delta
m\right) ^{\alpha }\right] ds \\
&=&-\Delta C_{\tilde{G}}\left( p,\Delta m\right) .
\end{eqnarray*}%
For any $\Delta m\geq 0$, note that the function $f\left( t\right) :=\left(
t+\Delta m\right) ^{\alpha }-t^{\alpha }$ is monotonic non-increasing on $%
t>0 $ when $0\leq \alpha <1$. By (\ref{density_comparison}), we have
\begin{equation*}
\left( \mathfrak{m}\left( \mathbf{c}\right) +\Delta m\right) ^{\alpha
}-\left( \mathfrak{m}\left( \mathbf{c}\right) \right) ^{\alpha }\leq \left(
\theta \left( s\right) +\Delta m\right) ^{\alpha }-\left( \theta \left(
s\right) \right) ^{\alpha }\leq \left( \theta \left( p\right) +\Delta
m\right) ^{\alpha }-\left( \theta \left( p\right) \right) ^{\alpha }.
\end{equation*}%
Thus, by (\ref{marginal_increment}), inequalities (\ref%
{incrementcost_comparison}) hold.

When $G$ is also optimal, we have $\mathbf{M}_{\alpha }\left( G\right)
=d_{\alpha }\left( \mathfrak{m}\left( \mathbf{c}\right) \delta _{O},\mathbf{c%
}\right) $. For any $\Delta m\geq -$ $\mathbf{\theta }\left( p\right) $,
since $G+\left( \Delta m\right) \gamma _{p}\in Path\left( \left( \mathfrak{m}%
\left( \mathbf{c}\right) +\Delta m\right) \delta _{O},\mathbf{c+}\left(
\Delta m\right) \delta _{p}\right) $ , we have
\begin{eqnarray*}
\Delta C_{G}\left( p,\Delta m\right) &=&\mathbf{M}_{\alpha }\left( G+\left(
\Delta m\right) \gamma _{p}\right) -\mathbf{M}_{\alpha }\left( G\right) \\
&\geq &d_{\alpha }\left( \left( \mathfrak{m}\left( \mathbf{c}\right) +\Delta
m\right) \delta _{O},\mathbf{c+}\left( \Delta m\right) \delta _{p}\right)
-d_{\alpha }\left( \mathfrak{m}\left( \mathbf{c}\right) \delta _{O},\mathbf{c%
}\right) .
\end{eqnarray*}
\end{proof}

Now, we apply marginal transportation analysis developed above to study
properties of an optimal assignment map. Let $G\in Path\left[ \mathbf{x},%
\mathbf{b}\right] $ be any optimal allocation path. By Theorem \ref{Theorem
1}, $G$ must be in the form of (\ref{G_S}), which is simply a disjoint union
of single-source paths $G_{i}$'s. For any $p$ on $G$, there exists a unique $%
i$ such that $p$ is on $G_{i}$. Thus, we can define the corresponding $%
\theta _{i}\left( p\right) $ and $\Delta C_{G_{i}}\left( p,\Delta m\right) $
as in (\ref{density}) and (\ref{marginal_increment}). Then, we set
\begin{equation*}
\theta \left( p\right) :=\theta _{i}\left( p\right) \text{ and }\Delta
C_{G}\left( p,\Delta m\right) :=\Delta C_{G_{i}}\left( p,\Delta m\right)
\end{equation*}%
for any $p$ on the support of $G$.

For any $\alpha \in (0,1)$ and $\sigma \geq \epsilon >0$, define%
\begin{equation}
\rho _{\alpha }\left( \sigma ,\epsilon \right) :=\left( \frac{\sigma }{%
\epsilon }\right) ^{\alpha }-\left( \frac{\sigma }{\epsilon }-1\right)
^{\alpha }\text{.}  \label{W-function}
\end{equation}%
The function $\rho _{\alpha }\left( \sigma ,\varepsilon \right) $ is
decreasing in $\sigma $ and increasing in $\epsilon $; also $0<\rho _{\alpha
}\left( \sigma ,\epsilon \right) \leq 1$ for any $\sigma \geq \epsilon >0$.
For $\alpha =0$, set $\rho _{0}\left( \sigma ,\epsilon \right) =0$ when $%
\sigma >\epsilon >0$ and $\rho _{0}\left( \sigma ,\epsilon \right) =1$ when $%
\sigma =\epsilon >0$.

\begin{proposition}
\label{prop key inequality}Suppose $G\in Path\left[ \mathbf{x},\mathbf{b}%
\right] $ is an optimal allocation path as given in (\ref{equation_G}). Let $%
p$ be a point on $G_{s}$ for some $s\in \left\{ 1,\cdots ,k\right\} $ and $%
\Delta m\in (0,\theta \left( p\right) ]$. For any $p^{\ast }$ on $G$ with $%
\gamma _{p}\cap \gamma _{p^{\ast }}$ having zero length, we have%
\begin{equation}
\Delta C_{G}\left( p,-\Delta m\right) +\left( \Delta m\right) ^{\alpha
}\left\Vert p-p^{\ast }\right\Vert +\Delta C_{G}\left( p^{\ast },\Delta
m\right) \geq 0  \label{increment_cost_inequality}
\end{equation}%
and
\begin{equation}
\left\Vert p-p^{\ast }\right\Vert +\rho _{\alpha }\left( \theta \left(
p^{\ast }\right) +\Delta m,\Delta m\right) \int_{\gamma _{p^{\ast }}}ds\geq
\rho _{\alpha }\left( \mathfrak{m}\left( \mathbf{b}_{s}\right) ,\Delta
m\right) \int_{\gamma _{p}}ds,  \label{p_p*}
\end{equation}%
where $\left\Vert \cdot \right\Vert $ stands for the standard norm on $%
\mathbb{R}^{m}$. In particular, for any $i\in \left\{ 1,\cdots ,k\right\} $,
\begin{equation}
\left\Vert p-x_{i}\right\Vert \geq \rho _{\alpha }\left( \mathfrak{m}\left(
\mathbf{b}_{s}\right) ,\Delta m\right) \int_{\gamma _{p}}ds\geq \rho
_{\alpha }\left( \mathfrak{m}\left( \mathbf{b}_{s}\right) ,\Delta m\right)
\left\Vert p-x_{s}\right\Vert .  \label{p_x_i}
\end{equation}%
Moreover, suppose $p^{\ast }$ is on $G_{i}$ with $i\neq s$ and $\Delta m\leq
\theta \left( p^{\ast }\right) $, then
\begin{equation}
\left\Vert p-p^{\ast }\right\Vert +\frac{\rho _{\alpha }\left( \theta \left(
p^{\ast }\right) +\Delta m,\Delta m\right) }{\rho _{\alpha }\left( \mathfrak{%
m}\left( \mathbf{b}_{i}\right) ,\Delta m\right) }\left\Vert p^{\ast
}-x_{i}\right\Vert \geq \rho _{\alpha }\left( \mathfrak{m}\left( \mathbf{b}%
_{s}\right) ,\Delta m\right) \left\Vert p-x_{s}\right\Vert .
\label{p_neighborhood}
\end{equation}
\end{proposition}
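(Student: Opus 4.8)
The plan is to prove all four inequalities from a single perturbation (rerouting) argument: since $G$ is optimal in $Path\left[\mathbf{x},\mathbf{b}\right]$, transferring an amount $\Delta m$ of flow from the pencil $\gamma_{p}$ to the pencil $\gamma_{p^{\ast}}$ cannot lower the cost, and this is exactly (\ref{increment_cost_inequality}); the remaining three inequalities are then extracted from it using Proposition \ref{prop increment cost}.

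First I would establish (\ref{increment_cost_inequality}). Write $G=\sum_{i}G_{i}$ as in (\ref{equation_G}), with $p$ on $G_{s}$ and $p^{\ast}$ on some component $G_{i'}$, let $L$ be the directed segment from $p^{\ast}$ to $p$, and form the competitor
\[
G' = \left(G_{s}-\Delta m\,\gamma_{p}\right)+\Delta m\,L+\left(G_{i'}+\Delta m\,\gamma_{p^{\ast}}\right)+\sum_{j\neq s,i'}G_{j},
\]
where, when $s=i'$, the two modifications are understood to act on the single component $G_{s}$. The crucial bookkeeping is on the boundary: lowering the flow by $\Delta m$ along $\gamma_{p}$ reduces the production at $x_{s}$ to $\mathfrak{m}\left(\mathbf{b}_{s}\right)-\Delta m\geq 0$ and leaves a deficit $\Delta m$ at $p$, while raising the flow by $\Delta m$ along $\gamma_{p^{\ast}}$ raises production at $x_{i'}$ and leaves a surplus $\Delta m$ at $p^{\ast}$; the segment $L$ carrying $\Delta m$ cancels both, so $\partial G'=\mathbf{b}-\mathbf{a}'$ for an atomic probability measure $\mathbf{a}'$ supported on $\mathbf{x}$, whence $G'\in Path\left[\mathbf{x},\mathbf{b}\right]$. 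Using the subadditivity $\left(w_{1}+w_{2}\right)^{\alpha}\leq w_{1}^{\alpha}+w_{2}^{\alpha}$ of $\mathbf{M}_{\alpha}$, together with the hypothesis that $\gamma_{p}\cap\gamma_{p^{\ast}}$ has zero length, which guarantees that the decrement on $\gamma_{p}$ and the increment on $\gamma_{p^{\ast}}$ fall on disjoint edges and therefore split additively, I obtain
\[
\mathbf{M}_{\alpha}\left(G'\right)-\mathbf{M}_{\alpha}\left(G\right)\leq \Delta C_{G}\left(p,-\Delta m\right)+\left(\Delta m\right)^{\alpha}\left\Vert p-p^{\ast}\right\Vert +\Delta C_{G}\left(p^{\ast},\Delta m\right).
\]
Optimality of $G$ makes the left-hand side nonnegative, which is (\ref{increment_cost_inequality}).

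Next I would pass to (\ref{p_p*}) by bounding the two increment terms with Proposition \ref{prop increment cost}. The algebraic identities $t^{\alpha}-\left(t-\Delta m\right)^{\alpha}=\left(\Delta m\right)^{\alpha}\rho_{\alpha}\left(t,\Delta m\right)$ and $\left(t+\Delta m\right)^{\alpha}-t^{\alpha}=\left(\Delta m\right)^{\alpha}\rho_{\alpha}\left(t+\Delta m,\Delta m\right)$ translate (\ref{W-function}) directly into the cost increments. The upper bound in (\ref{incrementcost_comparison}) gives $\Delta C_{G}\left(p^{\ast},\Delta m\right)\leq \left(\Delta m\right)^{\alpha}\rho_{\alpha}\left(\theta\left(p^{\ast}\right)+\Delta m,\Delta m\right)\int_{\gamma_{p^{\ast}}}ds$, while evaluating (\ref{marginal_increment}) at $-\Delta m$ and using that $\rho_{\alpha}\left(\cdot,\Delta m\right)$ is decreasing in its first argument with $\theta\left(s\right)\leq\mathfrak{m}\left(\mathbf{b}_{s}\right)$ along $\gamma_{p}$ (by (\ref{density_comparison})) gives $\Delta C_{G}\left(p,-\Delta m\right)\leq -\left(\Delta m\right)^{\alpha}\rho_{\alpha}\left(\mathfrak{m}\left(\mathbf{b}_{s}\right),\Delta m\right)\int_{\gamma_{p}}ds$. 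Substituting both estimates into (\ref{increment_cost_inequality}) and dividing by $\left(\Delta m\right)^{\alpha}>0$ yields (\ref{p_p*}). Then (\ref{p_x_i}) and (\ref{p_neighborhood}) are specializations. Taking $p^{\ast}=x_{i}$ makes $\gamma_{p^{\ast}}$ a single point, so $\int_{\gamma_{p^{\ast}}}ds=0$ and the zero-length hypothesis is automatic; (\ref{p_p*}) then reads $\left\Vert p-x_{i}\right\Vert\geq\rho_{\alpha}\left(\mathfrak{m}\left(\mathbf{b}_{s}\right),\Delta m\right)\int_{\gamma_{p}}ds$, and the second half of (\ref{p_x_i}) follows from $\int_{\gamma_{p}}ds\geq\left\Vert p-x_{s}\right\Vert$ together with $\rho_{\alpha}>0$. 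For (\ref{p_neighborhood}) I would apply (\ref{p_x_i}) to the point $p^{\ast}$ on $G_{i}$ (legitimate because $\Delta m\leq\theta\left(p^{\ast}\right)$) with factory $x_{i}$, obtaining $\int_{\gamma_{p^{\ast}}}ds\leq\left\Vert p^{\ast}-x_{i}\right\Vert\big/\rho_{\alpha}\left(\mathfrak{m}\left(\mathbf{b}_{i}\right),\Delta m\right)$; feeding this into (\ref{p_p*}) and again using $\int_{\gamma_{p}}ds\geq\left\Vert p-x_{s}\right\Vert$ produces (\ref{p_neighborhood}).

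The main obstacle is the first step: verifying that the rerouted chain $G'$ really lies in $Path\left[\mathbf{x},\mathbf{b}\right]$ (the mass cancellations at $p$ and $p^{\ast}$, and the nonnegativity $\mathfrak{m}\left(\mathbf{b}_{s}\right)-\Delta m\geq 0$, which needs $\Delta m\leq\theta\left(p\right)\leq\mathfrak{m}\left(\mathbf{b}_{s}\right)$), and recognizing that the zero-length condition on $\gamma_{p}\cap\gamma_{p^{\ast}}$ is precisely what lets the cost change decompose into the two increment terms plus the connecting segment. Everything after (\ref{increment_cost_inequality}) is routine manipulation of the two $\rho_{\alpha}$ identities and the monotonicity of $\rho_{\alpha}$ recorded after (\ref{W-function}).
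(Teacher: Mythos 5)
Your proposal is correct and takes essentially the same approach as the paper: the identical competitor $\hat{G}=G-\left( \Delta m\right) \gamma _{p}+\left( \Delta m\right) [p,p^{\ast }]+\left( \Delta m\right) \gamma _{p^{\ast }}$ compared against $G$ by optimality (with the zero-length condition playing exactly the role you identify), then the same use of Proposition \ref{prop increment cost} to get (\ref{p_p*}), and the same specializations $p^{\ast }=x_{i}$ for (\ref{p_x_i}) and the application of (\ref{p_x_i}) at $p^{\ast }$ for (\ref{p_neighborhood}). The only cosmetic difference is that you bound $\Delta C_{G}\left( p,-\Delta m\right) $ directly from the integral formula (\ref{marginal_increment}) together with the monotonicity of $\rho _{\alpha }$ and (\ref{density_comparison}), whereas the paper routes the identical estimate through the identity (\ref{alternative}) and the lower half of (\ref{incrementcost_comparison}) applied to $G-\left( \Delta m\right) \gamma _{p}$.
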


\begin{proof}
Let $\hat{G}=G-\left( \Delta m\right) \gamma _{p}+\left( \Delta m\right)
[p,p^{\ast }]+\left( \Delta m\right) \gamma _{p^{\ast }}$, where $[p,p^{\ast
}]$ denotes the line segment from $p$ to $p^{\ast }$. Then, when the
intersection of polyhedral curves $\gamma _{p}\cap \gamma _{p^{\ast }}$ has
length zero, we have
\begin{eqnarray*}
&&\Delta C_{G}\left( p,-\Delta m\right) +\left( \Delta m\right) ^{\alpha
}\left\Vert p-p^{\ast }\right\Vert +\Delta C_{G}\left( p^{\ast },\Delta
m\right) \\
&=&\int_{\gamma _{p}}\left[ \left( \theta \left( s\right) -\Delta m\right)
^{\alpha }-\theta \left( s\right) ^{\alpha }\right] ds+\left( \Delta
m\right) ^{\alpha }\left\Vert p-p^{\ast }\right\Vert \\
&&+\int_{\gamma _{p^{\ast }}}\left[ \left( \theta \left( s\right) +\Delta
m\right) ^{\alpha }-\theta \left( s\right) ^{\alpha }\right] ds \\
&\geq &\mathbf{M}_{\alpha }\left( \hat{G}\right) -\mathbf{M}_{\alpha }\left(
G\right) \geq 0\text{, by the optimality of }G.
\end{eqnarray*}%
To prove (\ref{p_p*}), we observe that
\begin{eqnarray*}
&&\left( \Delta m\right) ^{\alpha }\left[ \left\Vert p-p^{\ast }\right\Vert
+\rho _{\alpha }\left( \theta \left( p^{\ast }\right) +\Delta m,\Delta
m\right) \int_{\gamma _{p^{\ast }}}ds\right] \\
&=&\left( \Delta m\right) ^{\alpha }\left\Vert p-p^{\ast }\right\Vert +\left[
\left( \theta \left( p^{\ast }\right) +\Delta m\right) ^{\alpha }-\left(
\theta \left( p^{\ast }\right) \right) ^{\alpha }\right] \int_{\gamma
_{p^{\ast }}}ds \\
&\geq &\left( \Delta m\right) ^{\alpha }\left\Vert p-p^{\ast }\right\Vert
+\Delta C_{G}\left( p^{\ast },\Delta m\right) \text{, by (\ref%
{incrementcost_comparison})} \\
&\geq &-\Delta C_{G}\left( p,-\Delta m\right) \text{, by (\ref%
{increment_cost_inequality})} \\
&=&\Delta C_{\tilde{G}}\left( p,\Delta m\right) \text{, by (\ref{alternative}%
)} \\
&\geq &\left[ \left( \mathfrak{m}\left( \mathbf{b}_{s}\right) \right)
^{\alpha }-\left( \mathfrak{m}\left( \mathbf{b}_{s}\right) -\Delta m\right)
^{\alpha }\right] \int_{\gamma _{p}}ds\text{, by (\ref%
{incrementcost_comparison})} \\
&=&\left( \Delta m\right) ^{\alpha }\rho _{\alpha }\left( \mathfrak{m}\left(
\mathbf{b}_{s}\right) ,\Delta m\right) \int_{\gamma _{p}}ds\geq \left(
\Delta m\right) ^{\alpha }\rho _{\alpha }\left( \mathfrak{m}\left( \mathbf{b}%
_{s}\right) ,\Delta m\right) \left\Vert p-x_{s}\right\Vert \text{.}
\end{eqnarray*}

In particular, when $p^{\ast }=x_{i}$ for some $i$, (\ref{p_p*}) becomes (%
\ref{p_x_i}) as $\int_{\gamma _{p^{\ast }}}ds=0$.

Now, suppose $p^{\ast }$ is on $G_{i}$ with $i\neq s$. Apply (\ref{p_x_i})
to $p^{\ast }$, we have
\begin{equation*}
\frac{\left\Vert p^{\ast }-x_{i}\right\Vert }{\rho _{\alpha }\left(
\mathfrak{m}\left( \mathbf{b}_{i}\right) ,\Delta m\right) }\geq \int_{\gamma
_{p^{\ast }}}ds.
\end{equation*}%
Therefore,
\begin{eqnarray*}
&&\left\Vert p-p^{\ast }\right\Vert +\frac{\rho _{\alpha }\left( \theta
\left( p^{\ast }\right) +\Delta m,\Delta m\right) }{\rho _{\alpha }\left(
\mathfrak{m}\left( \mathbf{b}_{i}\right) ,\Delta m\right) }\left\Vert
p^{\ast }-x_{i}\right\Vert \\
&\geq &\left\Vert p-p^{\ast }\right\Vert +\rho _{\alpha }\left( \theta
\left( p^{\ast }\right) +\Delta m,\Delta m\right) \int_{\gamma _{p^{\ast
}}}ds \\
&\geq &\rho _{\alpha }\left( \mathfrak{m}\left( \mathbf{b}_{s}\right)
,\Delta m\right) \int_{\gamma _{p}}ds\text{, by (\ref{p_p*})} \\
&\geq &\rho _{\alpha }\left( \mathfrak{m}\left( \mathbf{b}_{s}\right)
,\Delta m\right) \left\Vert p-x_{s}\right\Vert \text{.}
\end{eqnarray*}
\end{proof}

Proposition \ref{prop key inequality} rests on the key inequality (\ref%
{increment_cost_inequality}), which follows intuitively by standard marginal
argument. For convenience of illustration, let $p$ denote the location $%
y_{j} $ of household $j$ who is connected to factory $s$ by some curve $%
\gamma _{p}.$ A planner will find it not optimal to choose an allocation
path $G$ such that $\Delta C_{G}\left( y_{j},-\Delta m\right) +\left( \Delta
m\right) ^{\alpha }\left\Vert y_{j}-p^{\ast }\right\Vert +\Delta C_{G}\left(
p^{\ast },\Delta m\right) <0$ for $p^{\ast }$ on some $G_{i}.$ It is because
in this case the planner has a less costly alternative by transferring $%
\Delta m$ amount of production from factory $s$ to factory $i$ and
transporting this additional $\Delta m$ units of commodity from factory $i$
first to a stopover point $p^{\ast }$ via curve $\gamma _{p^{\ast }}$ and
then directly from $p^{\ast }$ to household $j$. It's clear that this
strategy will send the same amount of commodity to household $j$ as before.
However, by the inequality, the reduction in transportation cost $-\Delta
C_{G}\left( p,-\Delta m\right) $ on curve $\gamma _{p}$ exceeds its increase
counterpart $\Delta C_{G}\left( p^{\ast },\Delta m\right) +\left( \Delta
m\right) ^{\alpha }\left\Vert y_{j}-p^{\ast }\right\Vert ,$ which cannot be
the case for an optimal allocation path.

By means of this proposition, we easily obtain the following results
regarding the properties of optimal assignment maps.

\begin{theorem}
\label{Theorem marginal}Suppose $S\in Map\left[ \ell ,k\right] $ is an
optimal assignment map. Let $j\in \left\{ 1,\cdots ,\ell \right\} $ and $%
s\in \left\{ 1,\cdots ,k\right\} $. If
\begin{equation}
y_{j}\in \digamma _{S}^{s}\left( n_{j}\right) :=\left\{ z\in \mathbb{R}%
^{m}:\min_{i\neq s}\left\Vert z-x_{i}\right\Vert <\rho _{\alpha }\left(
\mathfrak{m}\left( \mathbf{b}_{s}\right) ,n_{j}\right) \left\Vert
z-x_{s}\right\Vert \right\} ,  \label{min_far}
\end{equation}%
then $S\left( j\right) \neq s$. If
\begin{equation}
y_{j}\in \Omega _{S}^{s}\left( n_{j}\right) :=\left\{ z\in \mathbb{R}%
^{m}:\left\Vert z-x_{s}\right\Vert <\min_{i\neq s}\rho _{\alpha }\left(
\mathfrak{m}\left( \mathbf{b}_{i}\right) ,n_{j}\right) \left\Vert
z-x_{i}\right\Vert \right\} ,  \label{min_close}
\end{equation}%
then $S\left( j\right) =s$.
\end{theorem}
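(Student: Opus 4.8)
The plan is to obtain both implications as direct consequences of inequality (\ref{p_x_i}) in Proposition \ref{prop key inequality}, arguing by contradiction in each case. The enabling observation is that since $S$ is an optimal assignment map, Theorem \ref{Theorem 1} guarantees that the associated path $G_S=\sum_{i=1}^{k}G_i$ of (\ref{G_S}) is an optimal allocation path in the form (\ref{equation_G}), so Proposition \ref{prop key inequality} applies to $G=G_S$. Throughout I would use the marginal perturbation $\Delta m=n_j$: because $y_j$ is a target vertex absorbing $n_j$ units, the balance equation (\ref{balance}) at $y_j$ yields $\theta\left(y_j\right)\geq n_j$, so $\Delta m=n_j\in\left(0,\theta\left(y_j\right)\right]$ is an admissible choice in (\ref{p_x_i}). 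Note also that $\mathfrak{m}\left(\mathbf{b}_i\right)=\sum_{j'\in S^{-1}(i)}n_{j'}\geq n_j$ whenever $j\in S^{-1}(i)$, so every value $\rho_\alpha\left(\mathfrak{m}\left(\mathbf{b}_i\right),n_j\right)$ appearing below is well defined.

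For the first implication, suppose toward a contradiction that $S\left(j\right)=s$. Then $y_j$ lies on the single-source component $G_s$, and applying (\ref{p_x_i}) with $p=y_j$ and $\Delta m=n_j$ gives, for every $i\neq s$,
\[
\left\Vert y_j-x_i\right\Vert \geq \rho_\alpha\left(\mathfrak{m}\left(\mathbf{b}_s\right),n_j\right)\left\Vert y_j-x_s\right\Vert .
\]
Minimizing over $i\neq s$ produces $\min_{i\neq s}\left\Vert y_j-x_i\right\Vert \geq \rho_\alpha\left(\mathfrak{m}\left(\mathbf{b}_s\right),n_j\right)\left\Vert y_j-x_s\right\Vert$, which directly contradicts the hypothesis $y_j\in\digamma_S^s\left(n_j\right)$ in (\ref{min_far}). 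Hence $S\left(j\right)\neq s$.

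For the second implication, suppose toward a contradiction that $S\left(j\right)=i$ for some $i\neq s$. Then $y_j$ lies on $G_i$, and applying (\ref{p_x_i}) to the component $G_i$ — reading the generic index in Proposition \ref{prop key inequality} as $i$ and taking the auxiliary factory to be $x_s$ — yields
\[
\left\Vert y_j-x_s\right\Vert \geq \rho_\alpha\left(\mathfrak{m}\left(\mathbf{b}_i\right),n_j\right)\left\Vert y_j-x_i\right\Vert .
\]
But the hypothesis $y_j\in\Omega_S^s\left(n_j\right)$ in (\ref{min_close}), specialized to this particular $i\neq s$, gives $\left\Vert y_j-x_s\right\Vert <\rho_\alpha\left(\mathfrak{m}\left(\mathbf{b}_i\right),n_j\right)\left\Vert y_j-x_i\right\Vert$, a contradiction. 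Since no $i\neq s$ can be the image of $j$, we conclude $S\left(j\right)=s$.

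Given the strength of Proposition \ref{prop key inequality}, the theorem is essentially a corollary and there is no serious analytic obstacle; the care is entirely in the bookkeeping. The one hypothesis I would verify explicitly is the admissibility $\Delta m=n_j\in\left(0,\theta\left(y_j\right)\right]$, read off from the balance at the sink $y_j$ (an equality precisely when $y_j$ is a leaf of its component). The other point worth stating cleanly is that (\ref{p_x_i}) may be invoked against \emph{any} factory $x_i$, including one in a different connected component, since $\gamma_{y_j}$ and the trivial curve $\gamma_{x_i}=\{x_i\}$ meet in zero length by the disjointness in Lemma \ref{decomposition_lemma}; this is exactly what licenses the role-swapped application underlying the second implication.
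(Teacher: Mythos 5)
Your proof is correct and takes essentially the same approach as the paper: both parts rest on inequality (\ref{p_x_i}) of Proposition \ref{prop key inequality}, applied to the optimal allocation path $G_{S}$ (optimal by Theorem \ref{Theorem 1}) with $p=y_{j}$ and $\Delta m=n_{j}$, whose admissibility $n_{j}\leq \theta \left( y_{j}\right) $ follows from the balance equation at the sink exactly as you note. The only cosmetic difference is in the second implication, where the paper deduces $y_{j}\in \digamma _{S}^{i}\left( n_{j}\right) $ for each $i\neq s$ and invokes the first part, while you apply (\ref{p_x_i}) on $G_{i}$ directly with $x_{s}$ as the comparison factory --- the same inequality either way.
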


\begin{proof}
Assume $y_{j}\in \digamma _{S}^{s}\left( n_{j}\right) $ but $S\left(
j\right) =s$. Let $G=G_{S}$ be an allocation path as in (\ref{G_S}). By
Theorem \ref{Theorem 1}, $G\in Path\left[ \mathbf{x},\mathbf{b}\right] $ is
an optimal allocation path. Clearly, $y_{j}$ is on $G_{s}$ when $S\left(
j\right) =s\,$. Apply (\ref{p_x_i}) to $p=y_{j}$ and $\Delta m=n_{j}$, we
have
\begin{equation*}
\min_{i\in \left\{ 1,\cdots ,k\right\} }\left\Vert y_{j}-x_{i}\right\Vert
\geq \rho _{\alpha }\left( \mathfrak{m}\left( \mathbf{b}_{s}\right)
,n_{j}\right) \left\Vert y_{j}-x_{s}\right\Vert ,
\end{equation*}%
which contradicts (\ref{min_far}).

On the other hand, for each $i\neq s$, if $y_{j}\in \Omega _{S}^{s}\left(
n_{j}\right) $, then
\begin{equation*}
\min_{i^{\ast }\neq i}\left\Vert y_{j}-x_{i^{\ast }}\right\Vert \leq
\left\Vert y_{j}-x_{s}\right\Vert <\rho _{\alpha }\left( \mathfrak{m}\left(
\mathbf{b}_{i}\right) ,n_{j}\right) \left\Vert y_{j}-x_{i}\right\Vert .
\end{equation*}%
By (\ref{min_far}), $y_{j}\in \digamma _{S}^{i}\left( n_{j}\right) $ and $%
S\left( j\right) \neq i$ for such $i\neq s$. Thus, $S\left( j\right) =s$.
\end{proof}

Intuitively speaking, inequality (\ref{min_far}) says that if household $j$
locates \textquotedblleft closer\textquotedblright\ to some factory than
factory $s$, then the planner will not assign her to the factory $s$. Here
the relative closeness is weighted by a number $\rho _{\alpha }\left(
\mathfrak{m}\left( \mathbf{b}_{s}\right) ,n_{j}\right) $. When the
production $\mathfrak{m}\left( \mathbf{b}_{s}\right) $ at factory $s$ is
low, due to the transport economy of scale, one would expect that the
planner would less likely assign household $j$ to factory $s$. This
predication is justified by Theorem \ref{Theorem marginal} because in this
case, inequality (\ref{min_far}) becomes more likely to hold as $\rho
_{\alpha }\left( \mathfrak{m}\left( \mathbf{b}_{s}\right) ,n_{j}\right) $ is
high. The later part (\ref{min_close}) of the theorem states a special case
that if household $j$ is located uniformly closer to a factory $s$ than to
other factories, then she will be assigned to factory $s$ under any optimal
assignment map.

We now give a geometric description of the sets $\digamma _{S}^{s}\left(
n_{j}\right) $ and $\Omega _{S}^{s}\left( n_{j}\right) $.

\begin{lemma}
\label{lemma ball}For any constant $C\in \left( 0,1\right) $, the set
\begin{equation*}
\left\{ x\in \mathbb{R}^{m}:\left\Vert x-x_{i}\right\Vert <C\left\Vert
x-x_{s}\right\Vert \right\} =B\left( x_{i}+\frac{C^{2}}{1-C^{2}}\left(
x_{i}-x_{s}\right) ,\frac{C}{1-C^{2}}\left\Vert x_{i}-x_{s}\right\Vert
\right) ,
\end{equation*}%
where $B\left( x,r\right) $ denotes the open ball $\left\{ z\in \mathbb{R}%
^{m}:\left\Vert z-x\right\Vert <r\right\} $.
\end{lemma}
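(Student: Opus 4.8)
The plan is to recognize the given set as the interior of an Apollonius sphere and to reduce the defining inequality to the standard form $\|x-x_0\|^2 < r^2$ by completing the square. Since $C>0$ and both sides of the inequality $\|x-x_i\| < C\|x-x_s\|$ are nonnegative, I would first square it to obtain the equivalent condition $\|x-x_i\|^2 < C^2\|x-x_s\|^2$, which is polynomial in the coordinates of $x$ and hence amenable to direct algebraic manipulation. Recording that this squaring is an equivalence (not merely an implication) is what will ultimately let me conclude the two sets coincide exactly.

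Next I would expand both squared norms via the inner product and collect everything on one side as a quadratic expression in $x$. The crucial observation is that the coefficient of $\|x\|^2$ equals $1-C^2$, which is strictly positive precisely because $C\in(0,1)$; this positivity is exactly what guarantees that the sublevel set is a bounded ball rather than an exterior region or a half-space. Dividing through by $1-C^2$ puts the inequality in monic form, and completing the square in $x$ then yields $\|x-x_0\|^2 < \|x_0\|^2 - \frac{\|x_i\|^2 - C^2\|x_s\|^2}{1-C^2}$ with center $x_0 = \frac{x_i - C^2 x_s}{1-C^2}$. A one-line rewriting over the common denominator shows $x_0 = x_i + \frac{C^2}{1-C^2}(x_i-x_s)$, which matches the center asserted in the statement.

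It then remains to verify that the right-hand side equals $r^2 = \left(\frac{C}{1-C^2}\right)^2\|x_i-x_s\|^2$. Here I would substitute the explicit $x_0$ into $\|x_0\|^2$, subtract the second term after clearing to the common denominator $(1-C^2)^2$, and simplify. I expect this simplification to be the main (though purely computational) obstacle: one must check that the $\|x_i\|^2$ and $\|x_s\|^2$ contributions recombine with the cross term $-2\langle x_i, x_s\rangle$ to leave exactly $C^2\|x_i-x_s\|^2$, with all quartic $C^4$ terms cancelling. Once this identity is confirmed, the right-hand side is manifestly nonnegative, so taking positive square roots identifies the radius as $\frac{C}{1-C^2}\|x_i-x_s\|$; and since every passage above is an equivalence, the described set is exactly the open ball $B(x_0,r)$, completing the proof.
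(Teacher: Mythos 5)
Your proposal is correct and follows essentially the same route as the paper's proof: squaring the inequality (an equivalence since both sides are nonnegative and $C \in (0,1)$), expanding via inner products, and completing the square to identify the center $x_{i}+\frac{C^{2}}{1-C^{2}}\left( x_{i}-x_{s}\right)$ and radius $\frac{C}{1-C^{2}}\left\Vert x_{i}-x_{s}\right\Vert$. The final algebraic simplification you flag as the main computational step does indeed go through as you anticipate, with the $C^{4}$ terms cancelling to leave $\frac{C^{2}}{\left( 1-C^{2}\right)^{2}}\left\Vert x_{i}-x_{s}\right\Vert^{2}$, exactly as in the paper's chain of equivalences.
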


\begin{proof}
Indeed,
\begin{eqnarray*}
& &\left\Vert x-x_{i}\right\Vert <C\left\Vert x-x_{s}\right\Vert \\
&\iff &\left\Vert x-x_{i}\right\Vert ^{2}<C^{2}\left\Vert x-x_{s}\right\Vert
^{2} \\
&\iff &\left\Vert x\right\Vert ^{2}-2x\cdot x_{i}+\left\Vert
x_{i}\right\Vert ^{2}<C^{2}\left( \left\Vert x\right\Vert ^{2}-2x\cdot
x_{s}+\left\Vert x_{s}\right\Vert ^{2}\right) \\
&\iff &\left\Vert x\right\Vert ^{2}-2x\cdot \frac{x_{i}-C^{2}x_{s}}{1-C^{2}}<%
\frac{C^{2}\left\Vert x_{s}\right\Vert ^{2}-\left\Vert x_{i}\right\Vert ^{2}%
}{1-C^{2}} \\
&\iff &\left\Vert x-x_{i}-\frac{C^{2}}{1-C^{2}}\left( x_{i}-x_{s}\right)
\right\Vert ^{2}<\frac{C^{2}}{\left( 1-C^{2}\right) ^{2}}\left\Vert
x_{i}-x_{s}\right\Vert ^{2}.
\end{eqnarray*}
\end{proof}

Clearly, the ball $B\left( x_{i}+\frac{C^{2}}{1-C^{2}}\left(
x_{i}-x_{s}\right) ,\frac{C}{1-C^{2}}\left\Vert x_{i}-x_{s}\right\Vert
\right) $ given above contains $x_{i}$ but not $x_{s}$. By Lemma \ref{lemma
ball}, (\ref{min_far}) says that geometrically, if household $j$ lies in the
union of $k-1$ open balls%
\begin{equation*}
\digamma _{S}^{s}\left( n_{j}\right) =\bigcup_{i\neq s}B\left( x_{i}+\frac{%
\left( w_{sj}\right) ^{2}}{1-\left( w_{sj}\right) ^{2}}\left(
x_{i}-x_{s}\right) ,\frac{w_{sj}}{1-\left( w_{sj}\right) ^{2}}\left\Vert
x_{s}-x_{i}\right\Vert \right) ,
\end{equation*}%
where $w_{sj}=\rho _{\alpha }\left( \mathfrak{m}\left( \mathbf{b}_{s}\right)
,n_{j}\right) $, then $S\left( j\right) \neq s$. Also, (\ref{min_close})
says that if the household $j$ lies in the intersection of $\left(
k-1\right) $ balls
\begin{equation}
\Omega _{S}^{s}\left( n_{j}\right) =\bigcap_{i\neq s}B\left( x_{s}+\frac{%
\left( w_{ij}\right) ^{2}}{1-\left( w_{ij}\right) ^{2}}\left(
x_{s}-x_{i}\right) ,\frac{w_{ij}}{1-\left( w_{ij}\right) ^{2}}\left\Vert
x_{i}-x_{s}\right\Vert \right)  \label{Hj(ns)}
\end{equation}
for some $s$, then $S\left( j\right) =s$.

Since $\mathfrak{m}\left( \mathbf{b}_{s}\right) \leq \mathfrak{m}\left(
\mathbf{b}\right) =1$ and the function $\rho _{\alpha }\left( \cdot
,n_{j}\right) $ is decreasing, we have $\rho _{\alpha }\left( 1,n_{j}\right)
\leq \rho _{\alpha }\left( \mathfrak{m}\left( \mathbf{b}_{s}\right)
,n_{j}\right) $ and thus
\begin{eqnarray}
\digamma ^{s}\left( n_{j}\right) &:&=\left\{ z:\min_{i\neq s}\left\Vert
z-x_{i}\right\Vert <\rho _{\alpha }(1,n_{j})\left\Vert z-x_{s}\right\Vert
\right\} \subseteq \digamma _{S}^{s}\left( n_{j}\right) \text{,}
\label{F0(nj)} \\
\Omega ^{s}\left( n_{j}\right) &:&=\left\{ z:\left\Vert z-x_{s}\right\Vert
<\rho _{\alpha }\left( 1,n_{j}\right) \min_{s\neq i}\left\Vert
z-x_{i}\right\Vert \right\} \subseteq \Omega _{S}^{s}\left( n_{j}\right)
\label{Omega0(nj)}
\end{eqnarray}%
for any optimal assignment map $S$. Note that the set $\digamma ^{s}\left(
n_{j}\right) $ (or $\Omega ^{s}\left( n_{j}\right) $) is still the union (or
intersection) of $k-1$ balls in $\mathbb{R}^{m}$, and is independent of $S$.
For example, the sets $\Omega ^{s}\left( n_{j}\right) $ with $n_{j}=0.8$ and
$n_{j}=0.5$ are given by Figure \ref{G_2}, where $x_{1}=(0,0),x_{2}=(2,0)$
and $x_{3}=(1,2)$.

\begin{figure}[h]
\centering
\includegraphics[width=0.5\textwidth]{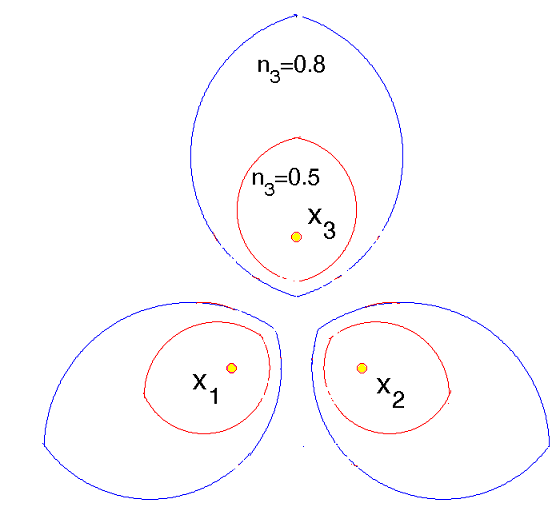}
\caption{An example of the sets $\Omega ^{s}\left( n_{j}\right) $ with $%
n_{j}=0.8$ (blue) and $n_{j}=0.5$ (red) when $\protect\alpha =1/2$.}
\label{G_2}
\end{figure}

\begin{corollary}
\label{Corollary Omega0nj}For any optimal assignment map $S\in Map\left[
\ell ,k\right] $ and $s\in \left\{ 1,\cdots ,k\right\} $, if $y_{j}\in
\digamma ^{s}\left( n_{j}\right) $, then $S\left( j\right) \neq s$. If $%
y_{j}\in \Omega ^{s}\left( n_{j}\right) $, then $S\left( j\right) =s$.
\end{corollary}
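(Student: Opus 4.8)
The plan is to deduce this corollary directly from Theorem \ref{Theorem marginal} by exploiting the two set inclusions (\ref{F0(nj)}) and (\ref{Omega0(nj)}) recorded just above it. Since the $S$-free sets $\digamma^s(n_j)$ and $\Omega^s(n_j)$ are contained in their $S$-dependent counterparts $\digamma_S^s(n_j)$ and $\Omega_S^s(n_j)$, membership in the former immediately implies membership in the latter, and then Theorem \ref{Theorem marginal} yields the desired conclusion on $S(j)$. So the entire task reduces to spelling out these inclusions and invoking the theorem.

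More concretely, I would first recall why the inclusions hold. Because $\mathfrak{m}(\mathbf{b}_i)\leq \mathfrak{m}(\mathbf{b})=1$ for every $i$, and because $\rho_\alpha(\cdot,n_j)$ is non-increasing in its first argument, we have $\rho_\alpha(1,n_j)\leq \rho_\alpha(\mathfrak{m}(\mathbf{b}_i),n_j)$ for each $i$. For the first assertion, if $y_j\in \digamma^s(n_j)$ then $\min_{i\neq s}\|y_j-x_i\| < \rho_\alpha(1,n_j)\|y_j-x_s\| \leq \rho_\alpha(\mathfrak{m}(\mathbf{b}_s),n_j)\|y_j-x_s\|$, so $y_j\in \digamma_S^s(n_j)$ and Theorem \ref{Theorem marginal} gives $S(j)\neq s$. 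For the second assertion, if $y_j\in \Omega^s(n_j)$, then pulling the scalar inside the minimum and using $\rho_\alpha(1,n_j)\leq \rho_\alpha(\mathfrak{m}(\mathbf{b}_i),n_j)$ termwise shows $\|y_j-x_s\| < \rho_\alpha(1,n_j)\min_{i\neq s}\|y_j-x_i\| \leq \min_{i\neq s}\rho_\alpha(\mathfrak{m}(\mathbf{b}_i),n_j)\|y_j-x_i\|$, so $y_j\in \Omega_S^s(n_j)$ and Theorem \ref{Theorem marginal} gives $S(j)=s$.

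I do not anticipate any genuine obstacle: all the analytic content, namely the marginal-transportation inequality (\ref{increment_cost_inequality}) and its consequence (\ref{p_x_i}), has already been absorbed into Theorem \ref{Theorem marginal}, and the only remaining ingredient is the elementary monotonicity of $\rho_\alpha$ in its mass argument together with the trivial bound $\mathfrak{m}(\mathbf{b}_i)\leq 1$. The one point worth stating carefully is the direction of the inclusions: enlarging the weight $\rho_\alpha$ enlarges the ``far'' region $\digamma$ and also enlarges the ``close'' region $\Omega$, so replacing $\rho_\alpha(\mathfrak{m}(\mathbf{b}_s),n_j)$ by the smaller $\rho_\alpha(1,n_j)$ shrinks both sets simultaneously. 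This is precisely what makes the $S$-independent versions $\digamma^s(n_j)$ and $\Omega^s(n_j)$ the right objects to feed into the theorem, since they are valid for every optimal $S$ at once.
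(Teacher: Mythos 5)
Your proposal is correct and takes essentially the same route as the paper: the authors' proof of Corollary \ref{Corollary Omega0nj} is precisely the one-line deduction from Theorem \ref{Theorem marginal} together with the inclusions (\ref{F0(nj)}) and (\ref{Omega0(nj)}). Your write-up simply makes explicit the monotonicity of $\rho _{\alpha }\left( \cdot ,n_{j}\right) $ and the bound $\mathfrak{m}\left( \mathbf{b}_{i}\right) \leq 1$ underlying those inclusions, which the paper records in the paragraph preceding the corollary.
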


\begin{proof}
It follows from Theorem \ref{Theorem marginal}, (\ref{F0(nj)}) and (\ref%
{Omega0(nj)}).
\end{proof}

This corollary shows that if the household $j$ falls into the region $\Omega
^{s}\left( n_{j}\right) $ of some factory $s,$ then she will be assigned to
this factory under any optimal assignment map $S$. As a result, if all
households belong to the union of regions $\Omega ^{s}\left( n_{j}\right) $
of factories $s$ except factory $i$, then factory $i$ will not be used. Note
that as $\rho _{\alpha }\left( 1,n_{j}\right) $ is increasing in $n_{j},$
the size of the region $\Omega ^{s}\left( n_{j}\right) $ increases with $%
n_{j}$ as shown in Figure \ref{G_2}.

\begin{theorem}
\label{Theorem_neighborhood}Suppose $S\in Map\left[ \ell ,k\right] $ is an
optimal assignment map, $h$ and $j\in \left\{ 1,\cdots ,\ell \right\} $ with
$n_{j}\leq n_{h}$. If $S\left( h\right) =s^{\ast }\neq s$ for some $s\in
\left\{ 1,\cdots ,k\right\} $ and
\begin{equation}
y_{j}\in \digamma _{S}^{s,h}\left( n_{j}\right) :=\left\{ z\in \mathbb{R}%
^{m}\left\vert
\begin{array}{l}
\left\Vert z-y_{h}\right\Vert +\frac{\rho _{\alpha }\left(
n_{h}+n_{j},n_{j}\right) }{\rho _{\alpha }\left( \mathfrak{m}\left( \mathbf{b%
}_{s^{\ast }}\right) ,n_{j}\right) }\left\Vert y_{h}-x_{s^{\ast }}\right\Vert
\\
<\rho _{\alpha }\left( \mathfrak{m}\left( \mathbf{b}_{s}\right)
,n_{j}\right) \left\Vert z-x_{s}\right\Vert%
\end{array}%
\right\} \right. ,  \label{min_far_neigh}
\end{equation}%
then $S\left( j\right) \neq s$. If $S\left( h\right) =s$ for some $s\in
\left\{ 1,\cdots ,k\right\} $ and
\begin{equation}
y_{j}\in \Omega _{S}^{s,h}\left( n_{j}\right) :=\bigcap_{i\neq s,\mathfrak{m}%
\left( \mathbf{b}_{i}\right) \geq n_{j}}\left\{ z\in \mathbb{R}%
^{m}\left\vert
\begin{array}{l}
\left\Vert z-y_{h}\right\Vert +\frac{\rho _{\alpha }\left(
n_{h}+n_{j},n_{j}\right) }{\rho _{\alpha }\left( \mathfrak{m}\left( \mathbf{b%
}_{s}\right) ,n_{j}\right) }\left\Vert y_{h}-x_{s}\right\Vert \\
<\rho _{\alpha }\left( \mathfrak{m}\left( \mathbf{b}_{i}\right)
,n_{j}\right) \left\Vert z-x_{i}\right\Vert%
\end{array}%
\right\} \right. ,  \label{min_close_neigh}
\end{equation}%
then $S\left( j\right) =s$.
\end{theorem}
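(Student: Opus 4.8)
The plan is to derive both assertions directly from inequality (\ref{p_neighborhood}) of Proposition \ref{prop key inequality}, applied to the allocation path $G=G_S$, which is optimal by Theorem \ref{Theorem 1} since $S$ is an optimal assignment map. The first statement will be an argument by contradiction using (\ref{p_neighborhood}) once, and the second will reduce to the first after a bookkeeping observation about the masses $\mathfrak{m}\left(\mathbf{b}_i\right)$.

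For the first statement, I would assume toward a contradiction that $y_j\in\digamma_S^{s,h}\left(n_j\right)$ yet $S\left(j\right)=s$. Then $y_j$ lies on $G_s$ while $y_h$ lies on $G_{s^*}$ with $s^*\neq s$, so they sit on disjoint components of $G$ and the curves $\gamma_{y_j},\gamma_{y_h}$ meet in zero length. I would apply (\ref{p_neighborhood}) with $p=y_j$, $p^*=y_h$, $i=s^*$, and $\Delta m=n_j$. The hypotheses hold because the incoming flow at a household vertex is at least its demand (from the balance equation (\ref{balance})), so $\theta\left(y_j\right)\geq n_j$ and $\theta\left(y_h\right)\geq n_h\geq n_j$, giving both $\Delta m\leq\theta\left(p\right)$ and $\Delta m\leq\theta\left(p^*\right)$. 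This yields
$$\|y_j-y_h\|+\frac{\rho_\alpha\left(\theta\left(y_h\right)+n_j,n_j\right)}{\rho_\alpha\left(\mathfrak{m}\left(\mathbf{b}_{s^*}\right),n_j\right)}\|y_h-x_{s^*}\|\geq\rho_\alpha\left(\mathfrak{m}\left(\mathbf{b}_s\right),n_j\right)\|y_j-x_s\|.$$
Since $\rho_\alpha\left(\cdot,n_j\right)$ is decreasing and $\theta\left(y_h\right)\geq n_h$, I would replace $\rho_\alpha\left(\theta\left(y_h\right)+n_j,n_j\right)$ by the larger $\rho_\alpha\left(n_h+n_j,n_j\right)$, which only enlarges the left side. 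The result contradicts the strict inequality defining membership in $\digamma_S^{s,h}\left(n_j\right)$, forcing $S\left(j\right)\neq s$.

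For the second statement, I would show $S\left(j\right)\neq i$ for every $i\neq s$, hence $S\left(j\right)=s$. Fix $i\neq s$. If $\mathfrak{m}\left(\mathbf{b}_i\right)<n_j$, then $S\left(j\right)=i$ is impossible, since assigning $j$ to factory $i$ would place $n_j$ among the masses summing to $\mathfrak{m}\left(\mathbf{b}_i\right)$ and force $\mathfrak{m}\left(\mathbf{b}_i\right)\geq n_j$. If instead $\mathfrak{m}\left(\mathbf{b}_i\right)\geq n_j$, then $i$ participates in the intersection defining $\Omega_S^{s,h}\left(n_j\right)$, so $y_j$ satisfies
$$\|y_j-y_h\|+\frac{\rho_\alpha\left(n_h+n_j,n_j\right)}{\rho_\alpha\left(\mathfrak{m}\left(\mathbf{b}_s\right),n_j\right)}\|y_h-x_s\|<\rho_\alpha\left(\mathfrak{m}\left(\mathbf{b}_i\right),n_j\right)\|y_j-x_i\|.$$
This is exactly the condition placing $y_j$ in the first-part set $\digamma_S^{i,h}\left(n_j\right)$, now with $s^*=s=S\left(h\right)$ and with $i$ as the excluded factory. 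As $n_j\leq n_h$ and $i\neq s=S\left(h\right)$, the first part applies and gives $S\left(j\right)\neq i$. Combining both cases over all $i\neq s$ yields $S\left(j\right)=s$.

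The substitution into (\ref{p_neighborhood}) is routine; the steps needing care are verifying the density hypotheses $\theta\left(y_j\right)\geq n_j$ and $\theta\left(y_h\right)\geq n_h\geq n_j$ from the balance equation so that the proposition is applicable, the monotone replacement of $\theta\left(y_h\right)$ by the data value $n_h$ (which relies on $\rho_\alpha\left(\cdot,n_j\right)$ being decreasing), and the observation that indices $i$ with $\mathfrak{m}\left(\mathbf{b}_i\right)<n_j$ are excluded automatically, which is precisely why the intersection in (\ref{min_close_neigh}) is restricted to $\mathfrak{m}\left(\mathbf{b}_i\right)\geq n_j$. I expect the main obstacle to be purely organizational: matching the index roles consistently when re-invoking the first part inside the second.
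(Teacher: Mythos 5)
Your proposal is correct and follows essentially the same route as the paper: both parts rest on applying inequality (\ref{p_neighborhood}) of Proposition \ref{prop key inequality} with $p=y_{j}$, $p^{\ast }=y_{h}$, $\Delta m=n_{j}$, together with the bound $\theta \left( y_{h}\right) \geq n_{h}$ and the monotonicity of $\rho _{\alpha }\left( \cdot ,n_{j}\right) $. Your only departures are cosmetic and sound: you deduce the second part by re-indexing the first (the paper instead repeats the same computation with $S\left( j\right) =i^{\ast }$ assumed for contradiction), and you make explicit the case $\mathfrak{m}\left( \mathbf{b}_{i}\right) <n_{j}$, which the paper handles implicitly via the observation that $S\left( j\right) =i^{\ast }$ forces $\mathfrak{m}\left( \mathbf{b}_{i^{\ast }}\right) \geq n_{j}$.
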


\begin{proof}
In the first scenario, assume $S\left( h\right) =s^{\ast }\neq s$, $y_{j}\in
\digamma _{S}^{s,h}\left( n_{j}\right) $ but $S\left( j\right) =s$. Then
\begin{eqnarray*}
&&\left\Vert y_{j}-y_{h}\right\Vert +\frac{\rho _{\alpha }\left(
n_{h}+n_{j},n_{j}\right) }{\rho _{\alpha }\left( \mathfrak{m}\left( \mathbf{b%
}_{s^{\ast }}\right) ,n_{j}\right) }\left\Vert y_{h}-x_{s^{\ast }}\right\Vert
\\
&\geq &\left\Vert y_{j}-y_{h}\right\Vert +\frac{\rho _{\alpha }\left( \theta
\left( y_{h}\right) +n_{j},n_{j}\right) }{\rho _{\alpha }\left( \mathfrak{m}%
\left( \mathbf{b}_{s^{\ast }}\right) ,n_{j}\right) }\left\Vert
y_{h}-x_{s^{\ast }}\right\Vert \text{ as }\theta \left( y_{h}\right) \geq
n_{h}\text{,} \\
&\geq &\rho _{\alpha }\left( \mathfrak{m}\left( \mathbf{b}_{s}\right)
,n_{j}\right) \left\Vert y_{j}-x_{s}\right\Vert \text{, by (\ref%
{p_neighborhood}),}
\end{eqnarray*}%
a contradiction with $y_{j}\in \digamma _{S}^{s,h}\left( n_{j}\right) $.
Thus, $S\left( j\right) \neq s.$

Now, in the second scenario, assume $S\left( h\right) =s$, $y_{j}\in \Omega
_{S}^{s,h}\left( n_{j}\right) $ but $S\left( j\right) =i^{\ast }$ for some $%
i^{\ast }\neq s$. Then,
\begin{eqnarray*}
&&\left\Vert y_{j}-y_{h}\right\Vert +\frac{\rho _{\alpha }\left(
n_{h}+n_{j},n_{j}\right) }{\rho _{\alpha }\left( \mathfrak{m}\left( \mathbf{b%
}_{s}\right) ,n_{j}\right) }\left\Vert y_{h}-x_{s}\right\Vert \\
&\geq &\left\Vert y_{j}-y_{h}\right\Vert +\frac{\rho _{\alpha }\left( \theta
\left( y_{h}\right) +n_{j},n_{j}\right) }{\rho _{\alpha }\left( \mathfrak{m}%
\left( \mathbf{b}_{s}\right) ,n_{j}\right) }\left\Vert
y_{h}-x_{s}\right\Vert \text{ as }\theta \left( y_{h}\right) \geq n_{h}\text{%
,} \\
&\geq &\rho _{\alpha }\left( \mathfrak{m}\left( \mathbf{b}_{i^{\ast
}}\right) ,n_{j}\right) \left\Vert y_{j}-x_{i^{\ast }}\right\Vert \text{, by
(\ref{p_neighborhood}),}
\end{eqnarray*}%
a contradiction with $y_{j}\in \Omega _{S}^{s,h}\left( n_{j}\right) $ as $%
\mathfrak{m}\left( \mathbf{b}_{i^{\ast }}\right) \geq n_{j}$ and $i^{\ast
}\neq s$. Thus, $S\left( j\right) =s.$
\end{proof}

The first part of Theorem \ref{Theorem_neighborhood} says that if some
household $h$ is not assigned to factory $s$, then any other nearby
household $j$ (i.e. within the neighborhood region $\digamma
_{S}^{s,h}\left( n_{j}\right) $) with a smaller demand will also not be
assigned to factory $s$. The second part says that if some household $h$ is
assigned to factory $s$, then any other nearby household $j$ (i.e. within
the neighborhood region $\Omega _{S}^{s,h}\left( n_{j}\right) $) with a
smaller demand will also be assigned to factory $s$. These findings agree
with the intuition that grouping with nearby households of large demand
would make it more likely to realize the benefit of transport economy of
scale.

\section{Properties of Optimal Assignment Maps via Projectional Analysis}

As seen in the previous section, under an optimal assignment map, a
household will be assigned to some factory if she lives close to the factory
(Theorem \ref{Theorem marginal}) or she has some nearby neighbors assigned
to the factory (Theorem \ref{Theorem_neighborhood}). In this section, we
will show a reverse result (Theorem \ref{autarky}) using a method of
projectional analysis.

Throughout this section, we consider the projection map from $\mathbb{R}^{m}$
to the line $\left\{ p+tv:t\in \mathbb{R}\right\} $ for fixed points $p,v\in
\mathbb{R}^{m}$ with $\left\Vert v\right\Vert =1$. Under this map, each
point $z\in \mathbb{R}^{m}$ is mapped to $p+\pi \left( z\right) v$ with
\begin{equation}
\pi \left( z\right) =\left\langle z-p,v\right\rangle ,  \label{pi_projection}
\end{equation}%
where $\left\langle \cdot ,\cdot \right\rangle $ stands for the standard
inner product in $\mathbb{R}^{m}$. For instance, when $p=\left( 0,\cdots
,0\right) \in \mathbb{R}^{m}$, and $v=\left( 1,0,\cdots ,0\right) \in
\mathbb{R}^{m}$, for each $z=\left( z_{1},\cdots ,z_{m}\right) $, $\pi
\left( z\right) =z_{1}$ gives the first coordinate of $z$.

We start with two lemmas regarding properties of a single-source transport
system. These lemmas will play a crucial role in establishing Theorem \ref%
{autarky} later.

\begin{lemma}
\label{Comparison} Suppose
\begin{equation}
\mathbf{c=}\sum_{j\in \Theta }c_{j}\delta _{z_{j}}\text{, }c_{j}>0\text{ for
}j\text{ in a finite set }\Theta \text{ }  \label{measure_c}
\end{equation}%
is an atomic measure on $X\subseteq \mathbb{R}^{m}$, and $P,Q\in X$. If
there exists a $t_{1}$ such that
\begin{equation*}
\pi \left( P\right) \geq t_{1}\geq \max_{j\in \Theta }\pi \left(
z_{j}\right) \text{ }\ \text{or }\pi \left( P\right) \leq t_{1}\leq
\min_{j\in \Theta }\pi \left( z_{j}\right) \text{,}
\end{equation*}%
then
\begin{equation*}
d_{\alpha }\left( \mathbf{c},\mathfrak{m}\left( \mathbf{c}\right) \delta
_{P}\right) -d_{\alpha }\left( \mathbf{c},\mathfrak{m}\left( \mathbf{c}%
\right) \delta _{Q}\right) \geq \mathfrak{m}\left( \mathbf{c}\right)
^{\alpha }\left( |\pi \left( P\right) -t_{1}|-CR-|\pi \left( Q\right)
-t_{1}|\right) ,
\end{equation*}%
where
\begin{equation}
C=\frac{\sqrt{m-1}}{2^{1-\left( m-1\right) \left( 1-\alpha \right) }-1}+1,
\label{ConstantC}
\end{equation}%
and
\begin{equation}
R=\max \left\{ \left\Vert z-p-\pi \left( z\right) v\right\Vert :z\in \left\{
z_{j}:j\in \Theta \right\} \cup \left\{ P,Q\right\} \right\} .
\label{constantR}
\end{equation}
\end{lemma}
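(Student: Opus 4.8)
The plan is to reduce both quantities $d_{\alpha}(\mathbf{c},\mathfrak{m}\delta_{P})$ and $d_{\alpha}(\mathbf{c},\mathfrak{m}\delta_{Q})$ (writing $\mathfrak{m}:=\mathfrak{m}(\mathbf{c})$) to the one-dimensional transport problem on the projection line, bound the first from below and the second from above, and let the common one-dimensional remainder cancel. I introduce the foot point $P_{0}:=p+t_{1}v$ on the line, the push-forward $\bar{\mathbf{c}}:=\sum_{j\in\Theta}c_{j}\delta_{\pi(z_{j})}$ of $\mathbf{c}$ under $\pi$, viewed as a measure on the line, and the scalar $L:=d_{\alpha}(\bar{\mathbf{c}},\mathfrak{m}\delta_{t_{1}})$, the cost of the one-dimensional problem with the source placed at the threshold $t_{1}$. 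Throughout I treat the case $\pi(P)\ge t_{1}\ge\max_{j}\pi(z_{j})$; the case $\pi(P)\le t_{1}\le\min_{j}\pi(z_{j})$ is identical after replacing $v$ by $-v$.

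For the lower bound I would use that the map $z\mapsto p+\pi(z)v$ is $1$-Lipschitz. Pushing any $G\in Path(\mathfrak{m}\delta_{P},\mathbf{c})$ forward under this map does not increase the length of any edge, and since $0\le\alpha<1$ makes $w\mapsto w^{\alpha}$ subadditive, edges that overlap after projection only get cheaper; hence $\mathbf{M}_{\alpha}(\pi_{\#}G)\le\mathbf{M}_{\alpha}(G)$, and taking the minimum gives $d_{\alpha}(\mathbf{c},\mathfrak{m}\delta_{P})\ge d_{\alpha}(\bar{\mathbf{c}},\mathfrak{m}\delta_{\pi(P)})$. Because $\pi(P)\ge t_{1}\ge\max_{j}\pi(z_{j})$, in the one-dimensional problem the entire mass $\mathfrak{m}$ must flow across $t_{1}$, so the one-dimensional cost splits exactly as $d_{\alpha}(\bar{\mathbf{c}},\mathfrak{m}\delta_{\pi(P)})=\mathfrak{m}^{\alpha}|\pi(P)-t_{1}|+L$. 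This yields $d_{\alpha}(\mathbf{c},\mathfrak{m}\delta_{P})\ge\mathfrak{m}^{\alpha}|\pi(P)-t_{1}|+L$.

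For the upper bound I would write down an explicit competitor for the $Q$-problem. First carry the whole mass $\mathfrak{m}$ along the single segment $[Q,P_{0}]$; since $\|Q-P_{0}\|=\sqrt{|\pi(Q)-t_{1}|^{2}+\|Q-p-\pi(Q)v\|^{2}}\le|\pi(Q)-t_{1}|+R$, this costs at most $\mathfrak{m}^{\alpha}(|\pi(Q)-t_{1}|+R)$. It then remains to transport $\mathfrak{m}\delta_{P_{0}}$ to $\mathbf{c}$ at cost at most $L+\mathfrak{m}^{\alpha}(C-1)R$. I would build this second path as a single branching (cone-type) structure whose flow runs merged along the line — reproducing the optimal one-dimensional backbone and thereby contributing $L$ — and peels off to the targets through a hierarchy of transverse scales in the $(m-1)$-dimensional subspace orthogonal to $v$, which has radius at most $R$. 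Enclosing the transverse components in a cube of side $2R$ and merging dyadically, the transport performed at level $k$ moves at most $2^{(m-1)(k+1)}$ sub-bundles of total mass $\mathfrak{m}$ over distance $\tfrac{2R\sqrt{m-1}}{2^{k+2}}$, so by concavity its cost is at most $\big(2^{(m-1)(k+1)}\big)^{1-\alpha}\mathfrak{m}^{\alpha}\cdot\tfrac{2R\sqrt{m-1}}{2^{k+2}}$. Summing this geometric series, whose ratio $2^{(m-1)(1-\alpha)-1}$ is less than $1$ precisely when $(m-1)(1-\alpha)<1$, produces exactly $\mathfrak{m}^{\alpha}\,\tfrac{\sqrt{m-1}}{2^{\,1-(m-1)(1-\alpha)}-1}\,R=\mathfrak{m}^{\alpha}(C-1)R$ with $C$ as in (\ref{ConstantC}). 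Hence $d_{\alpha}(\mathbf{c},\mathfrak{m}\delta_{Q})\le\mathfrak{m}^{\alpha}(|\pi(Q)-t_{1}|+CR)+L$.

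Subtracting the two estimates cancels $L$ and leaves $d_{\alpha}(\mathbf{c},\mathfrak{m}\delta_{P})-d_{\alpha}(\mathbf{c},\mathfrak{m}\delta_{Q})\ge\mathfrak{m}^{\alpha}(|\pi(P)-t_{1}|-CR-|\pi(Q)-t_{1}|)$, as required. The projection/subadditivity estimate and the one-dimensional splitting are routine; the main obstacle is the branching construction from $P_{0}$. The naive attempt to factor it as a transverse collapse $\mathbf{c}\rightsquigarrow\bar{\mathbf{c}}$ followed by a line transport fails, because the cost of collapsing the transverse directions alone depends on the axial spread of the support: when two targets share a transverse position but lie far apart along $v$ they cannot be merged cheaply. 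The delicate point is therefore to couple the along-line flow and the transverse refinement in one tree, so that the same merged backbone simultaneously realizes the one-dimensional optimum $L$ and supplies the merging needed for the $(m-1)$-dimensional dyadic estimate among targets with nearby axial positions; verifying that the true Euclidean edge lengths in this coupled tree still sum to $L+\mathfrak{m}^{\alpha}(C-1)R$, and that the resulting series reproduces the constant $C$, is where the real work lies. The requirement that $C$ be finite and positive is exactly the convergence condition $(m-1)(1-\alpha)<1$ of the dyadic series.
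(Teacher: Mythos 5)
Your lower-bound step is fine: $d_{\alpha}$ does not increase under the $1$-Lipschitz projection onto the line, and the one-dimensional splitting $d_{\alpha}\left( \bar{\mathbf{c}},\mathfrak{m}\delta _{\pi \left( P\right) }\right) =\mathfrak{m}^{\alpha }|\pi \left( P\right) -t_{1}|+L$ is correct. The fatal problem is the upper bound. The inequality you need, $d_{\alpha }\left( \mathfrak{m}\delta _{P_{0}},\mathbf{c}\right) \leq L+\mathfrak{m}^{\alpha }\left( C-1\right) R$, which you defer as ``where the real work lies,'' is not merely unproven -- it is false in general, because the one-dimensional projected cost $L$ is not attainable by any tree in $\mathbb{R}^{m}$ up to an additive error of order $\mathfrak{m}^{\alpha }R$ once the transverse positions of the targets oscillate along the axis. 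Take $m=2$, $t_{1}=0$, $P_{0}$ at the origin, and $N$ targets $z_{i}=\left( -iR,\left( -1\right) ^{i}R\right) $ of mass $1/N$ each, so $L\approx NR/\left( 1+\alpha \right) $ and the transverse radius is $R$. In each slab $\left\{ -\left( i+1\right) R\leq \pi \leq -iR\right\} $ any transport path must pay, beyond the projected cost $f_{i}^{\alpha }R$ (with $f_{i}$ the through-flux), one of two penalties: if the flux stays merged in a single strand, that strand must zig-zag between $y=+R$ and $y=-R$, costing an extra $f_{i}^{\alpha }\left( \sqrt{5}-1\right) R$; if instead it runs split into parallel strands so that a feeder waits near the opposite side, the subadditivity gap $w_{1}^{\alpha }+w_{2}^{\alpha }-\left( w_{1}+w_{2}\right) ^{\alpha }\geq \left( 1-\alpha \right) \min \left( w_{1},w_{2}\right) ^{\alpha }$ costs an extra of order $\left( 1-\alpha \right) \left( 1/N\right) ^{\alpha }R$ per slab. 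Either way the total excess over $L$ is at least of order $N^{1-\alpha }R$, which exceeds $\mathfrak{m}^{\alpha }\left( C-1\right) R$ for large $N$. So the obstruction you correctly identify -- targets with nearby transverse position but distant axial position cannot be merged cheaply -- cannot be engineered away by coupling the backbone with a dyadic peeling; no such tree exists with the claimed cost.

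This is exactly why the paper never attempts to cancel the one-dimensional optimum $L$: it cancels the \emph{actual} downstream cost instead. The paper takes an optimal path $G\in Path\left( \mathbf{c},\mathfrak{m}\left( \mathbf{c}\right) \delta _{P}\right) $, cuts the unique curve from $P$ to each $z_{j}$ at the hyperplane $\left\{ \pi =t_{1}\right\} $ to obtain an intermediate atomic measure $\mathbf{\tilde{c}}$ supported on the $\left( m-1\right) $-dimensional ball $B_{R}\left( O_{1}\right) $, and uses the exact splitting $d_{\alpha }\left( \mathbf{c},\mathfrak{m}\left( \mathbf{c}\right) \delta _{P}\right) =d_{\alpha }\left( \mathbf{c},\mathbf{\tilde{c}}\right) +d_{\alpha }\left( \mathbf{\tilde{c}},\mathfrak{m}\left( \mathbf{c}\right) \delta _{P}\right) $ together with the triangle inequality $d_{\alpha }\left( \mathbf{c},\mathfrak{m}\left( \mathbf{c}\right) \delta _{Q}\right) \leq d_{\alpha }\left( \mathbf{c},\mathbf{\tilde{c}}\right) +d_{\alpha }\left( \mathbf{\tilde{c}},\mathfrak{m}\left( \mathbf{c}\right) \delta _{Q}\right) $. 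The common term $d_{\alpha }\left( \mathbf{c},\mathbf{\tilde{c}}\right) $ -- whatever its value, including all the oscillation costs above -- cancels, so the downstream tree is reused verbatim rather than reconstructed. What remains involves only measures on the cutting hyperplane: $d_{\alpha }\left( \mathbf{\tilde{c}},\mathfrak{m}\left( \mathbf{c}\right) \delta _{P}\right) \geq \mathfrak{m}\left( \mathbf{c}\right) ^{\alpha }|\pi \left( P\right) -t_{1}|$ by the projection lemma of Xia and Vershynina, and $d_{\alpha }\left( \mathbf{\tilde{c}},\mathfrak{m}\left( \mathbf{c}\right) \delta _{Q}\right) \leq \mathfrak{m}\left( \mathbf{c}\right) ^{\alpha }\tilde{C}R+\mathfrak{m}\left( \mathbf{c}\right) ^{\alpha }\left( R+|\pi \left( Q\right) -t_{1}|\right) $ by Xia's ball-collapse estimate applied \emph{within} the $\left( m-1\right) $-dimensional ball plus one straight segment to $Q$; this is where $C=\tilde{C}+1$ in (\ref{ConstantC}) comes from, with the dyadic series you wrote appearing only in this static, fixed-hyperplane setting where it is valid. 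To repair your proof, replace the cancellation of $L$ by the cancellation of $d_{\alpha }\left( \mathbf{c},\mathbf{\tilde{c}}\right) $; your projection lower bound can stay but is then no longer needed in its one-dimensional global form.
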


\begin{proof}
Without loss of generality, we assume that $\pi \left( P\right) \geq
t_{1}\geq \max_{j\in \Theta }\pi \left( z_{j}\right) $. Let $G\in Path\left(
\mathbf{c},\mathfrak{m}\left( \mathbf{c}\right) \delta _{P}\right) $ be an
optimal transport path. As in (\ref{g_matrix}), there exists a unique curve $%
\gamma _{z_{j}}$ from $P$ to $z_{j}$ for each $j$. Since $\pi \left(
P\right) \geq t_{1}\geq $ $\pi \left( z_{j}\right) $, there exists a point $%
v_{j}$ on the curve $\gamma _{z_{j}}$ such that $\pi \left( v_{j}\right)
=t_{1}$. Let
\begin{equation*}
\mathbf{\tilde{c}=}\sum_{j\in \Theta }c_{j}\delta _{v_{j}},
\end{equation*}%
then, by the optimality of $G$, we have%
\begin{equation}
d_{\alpha }\left( \mathbf{c},\mathfrak{m}\left( \mathbf{c}\right) \delta
_{P}\right) =d_{\alpha }\left( \mathbf{c},\mathbf{\tilde{c}}\right)
+d_{\alpha }\left( \mathbf{\tilde{c}},\mathfrak{m}\left( \mathbf{c}\right)
\delta _{P}\right) .  \label{d_alpha_cut}
\end{equation}%
For $O_{1}=p+t_{1}v$ and $R$ given in (\ref{constantR}), we consider the set
\begin{equation*}
B_{R}\left( O_{1}\right) :=\left\{ z\in \mathbb{R}^{m}:\pi \left( z\right)
=t_{1}\text{, }\left\Vert z-O_{1}\right\Vert \leq R\right\} ,
\end{equation*}%
which is an $\left( m-1\right) $ dimensional ball perpendicular to the line
passing through $p$ in the direction $v$. By means of (\ref{constantR}), the
measure $\mathbf{\tilde{c}}$ is supported on $B_{R}\left( O_{1}\right) $.
Let
\begin{equation*}
\tilde{P}=P+\left( t_{1}-\pi \left( P\right) \right) v
\end{equation*}%
be the projection point of $P$ onto the ball $B_{R}\left( O_{1}\right) $.
One can show as in Xia and Vershynina \cite[Lemma 2.3.1]{xia9} that
\begin{equation*}
d_{\alpha }\left( \mathbf{\tilde{c}},\mathfrak{m}\left( \mathbf{c}\right)
\delta _{P}\right) \geq d_{\alpha }\left( \mathfrak{m}\left( \mathbf{c}%
\right) \delta _{\tilde{P}},\mathfrak{m}\left( \mathbf{c}\right) \delta
_{P}\right) =\mathfrak{m}\left( \mathbf{c}\right) ^{\alpha }|\pi \left(
P\right) -t_{1}|.
\end{equation*}%
On the other hand, by Xia \cite[Theorem 3.1]{xia1}, we have
\begin{eqnarray*}
d_{\alpha }\left( \mathbf{\tilde{c}},\mathfrak{m}\left( \mathbf{c}\right)
\delta _{Q}\right) &\leq &d_{\alpha }\left( \mathbf{\tilde{c}},\mathfrak{m}%
\left( \mathbf{c}\right) \delta _{O_{1}}\right) +d_{\alpha }\left( \mathfrak{%
m}\left( \mathbf{c}\right) \delta _{O_{1}},\mathfrak{m}\left( \mathbf{c}%
\right) \delta _{Q}\right) \\
&\leq &\mathfrak{m}\left( \mathbf{c}\right) ^{\alpha }\tilde{C}R+\mathfrak{m}%
\left( \mathbf{c}\right) ^{\alpha }\left( R+|\pi \left( Q\right)
-t_{1}|\right) ,
\end{eqnarray*}%
where
\begin{equation*}
\tilde{C}=\frac{\sqrt{m-1}}{2^{1-\left( m-1\right) \left( 1-\alpha \right)
}-1}\text{.}
\end{equation*}%
Therefore,
\begin{eqnarray*}
&&d_{\alpha }\left( \mathbf{c},\mathfrak{m}\left( \mathbf{c}\right) \delta
_{P}\right) -d_{\alpha }\left( \mathbf{c},\mathfrak{m}\left( \mathbf{c}%
\right) \delta _{Q}\right) \\
&\geq &d_{\alpha }\left( \mathbf{c},\mathfrak{m}\left( \mathbf{c}\right)
\delta _{P}\right) -\left[ d_{\alpha }\left( \mathbf{c},\mathbf{\tilde{c}}%
\right) +d_{\alpha }\left( \mathbf{\tilde{c}},\mathfrak{m}\left( \mathbf{c}%
\right) \delta _{Q}\right) \right] \text{, by the triangle inequality} \\
&=&d_{\alpha }\left( \mathbf{\tilde{c}},\mathfrak{m}\left( \mathbf{c}\right)
\delta _{P}\right) -d_{\alpha }\left( \mathbf{\tilde{c}},\mathfrak{m}\left(
\mathbf{c}\right) \delta _{Q}\right) \text{, by (\ref{d_alpha_cut})} \\
&\geq &\mathfrak{m}\left( \mathbf{c}\right) ^{\alpha }|\pi \left( P\right)
-t_{1}|-\mathfrak{m}\left( \mathbf{c}\right) ^{\alpha }\left( CR+|\pi \left(
Q\right) -t_{1}|\right) .
\end{eqnarray*}
\end{proof}

\begin{lemma}
\label{decomposition}Let $\mathbf{c}$ be an atomic measure as given in (\ref%
{measure_c}) and $O\in X$. If the set $\Theta $ is decomposed as the
disjoint union of two nonempty subsets
\begin{equation*}
\Theta =\Theta _{1}\amalg \Theta _{2},
\end{equation*}%
then, for any optimal transport path $G\in Path\left( \mathfrak{m}\left(
\mathbf{c}\right) \delta _{O},\mathbf{c}\right) $, there exist a vertex
point $P\in V\left( G\right) $ and a decomposition of each $\Theta _{i}$:
\begin{equation*}
\Theta _{i}=\tilde{\Theta}_{i}\amalg \bar{\Theta}_{i}\text{\textbf{\ }with }%
\tilde{\Theta}_{i}\text{ nonempty, }i=1,2
\end{equation*}%
such that $G$ can be decomposed as%
\begin{equation}
G=G_{1}+G_{2}+G_{3}  \label{G_decompoistion}
\end{equation}%
where for $i=1,2$, $G_{i}$ is an optimal transport path from $\mathfrak{m}%
\left( \mathbf{\tilde{c}}_{i}\right) \delta _{P}$ to $\mathbf{\tilde{c}}_{i}$
for
\begin{equation}
\mathbf{\tilde{c}}_{i}=\sum_{j\in \tilde{\Theta}_{i}}c_{j}\delta _{z_{j}}%
\text{ ,}  \label{equation_c_tilde}
\end{equation}%
$G_{3}$ is an optimal transport path from $\mathfrak{m}\left( \mathbf{c}%
\right) \delta _{O}$ to $\mathbf{\bar{c}+}\left( \mathfrak{m}\left( \mathbf{%
\tilde{c}}_{1}\right) +\mathfrak{m}\left( \mathbf{\tilde{c}}_{2}\right)
\right) \delta _{P}$ for
\begin{equation*}
\mathbf{\bar{c}}=\sum_{j\in \bar{\Theta}_{1}\cup \bar{\Theta}%
_{2}}c_{j}\delta _{z_{j}},
\end{equation*}%
and $\left\{ G_{i}\right\} _{i=1}^{3}$ are pairwise disjoint except at $P$.
Moreover, (\ref{G_decompoistion}) implies
\begin{equation}
\mathbf{M}_{\alpha }\left( G\right) =\mathbf{M}_{\alpha }\left( G_{1}\right)
+\mathbf{M}_{\alpha }\left( G_{2}\right) +\mathbf{M}_{\alpha }\left(
G_{3}\right)  \label{M_G_decomposition}
\end{equation}%
and by the optimality of $G$, it follows%
\begin{eqnarray}
d_{\alpha }\left( \mathfrak{m}\left( \mathbf{c}\right) \delta _{O},\mathbf{c}%
\right) &=&d_{\alpha }\left( \mathfrak{m}\left( \mathbf{\tilde{c}}%
_{1}\right) \delta _{P},\mathbf{\tilde{c}}_{1}\right) +d_{\alpha }\left(
\mathfrak{m}\left( \mathbf{\tilde{c}}_{2}\right) \delta _{P},\mathbf{\tilde{c%
}}_{2}\right)  \label{d_a_sum} \\
&&+d_{\alpha }\left( \mathfrak{m}\left( \mathbf{c}\right) \delta _{O},%
\mathbf{\bar{c}+}\left( \mathfrak{m}\left( \mathbf{\tilde{c}}_{1}\right) +%
\mathfrak{m}\left( \mathbf{\tilde{c}}_{2}\right) \right) \delta _{P}\right) .
\notag
\end{eqnarray}
\end{lemma}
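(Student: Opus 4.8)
The plan is to exploit the tree structure of $G$: since $G$ has the single source $O$ and contains no cycles, it is a directed tree rooted at $O$ whose vertex set contains all the targets $\{z_j:j\in\Theta\}$. For each vertex $v$, let $D(v)\subseteq\Theta$ denote the set of indices $j$ for which $v$ lies on the unique curve $\gamma_{z_j}$ from $O$ to $z_j$, i.e. the targets lying downstream of $v$. Call $v$ \emph{mixed} if $D(v)$ meets both $\Theta_1$ and $\Theta_2$. The root $O$ is mixed, because $D(O)=\Theta$ and both $\Theta_i$ are nonempty, while every leaf is non-mixed. I would take $P$ to be a mixed vertex of \emph{maximal depth} (maximal graph distance from $O$); such a vertex exists since the set of mixed vertices is finite and nonempty. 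By maximality, every vertex strictly downstream of $P$ is non-mixed, and since each child $v$ of $P$ still has $D(v)\neq\emptyset$, the downstream subtree rooted at each child serves targets from only one of $\Theta_1,\Theta_2$; call such children \emph{pure}.

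Next I would read the decomposition of $\Theta_i$ off the children of $P$. Let $\tilde\Theta_1$ be the union of $D(v)$ over all children $v$ of $P$ whose subtree is $\Theta_1$-pure, and symmetrically $\tilde\Theta_2$. Because $P$ is mixed, at least one of these is nonempty; if both child-types are present, both are nonempty and any target located at $P$ may be placed in $\bar\Theta$. In the degenerate situation where all children are pure of a single type, say $\Theta_1$, mixedness of $P$ forces a target $z_{j_0}=P$ with $j_0\in\Theta_2$, and I would set $\tilde\Theta_2=\{j_0\}$, so that $\tilde{\mathbf c}_2=c_{j_0}\delta_{P}$ and $G_2$ is the trivial (empty, zero-cost) path; in all cases $\tilde\Theta_i\neq\emptyset$, and I put $\bar\Theta_i=\Theta_i\setminus\tilde\Theta_i$. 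I then define $G_1$ to be the union of the edges of the $\Theta_1$-pure downstream subtrees of $P$ together with the edges $[P,v]$ joining $P$ to those children, define $G_2$ symmetrically, and let $G_3$ consist of all remaining edges of $G$ (the part from $O$ down to $P$, together with every branch serving $\bar\Theta$). By construction $G_1,G_2,G_3$ partition the edge set of $G$ and meet only at $P$, and tracking the balance equation (\ref{balance}) shows $G_i\in Path\left(\mathfrak{m}(\tilde{\mathbf c}_i)\delta_P,\tilde{\mathbf c}_i\right)$ for $i=1,2$, while $G_3\in Path\left(\mathfrak{m}(\mathbf c)\delta_O,\bar{\mathbf c}+(\mathfrak{m}(\tilde{\mathbf c}_1)+\mathfrak{m}(\tilde{\mathbf c}_2))\delta_P\right)$, since the mass that $G$ forwards through $P$ downstream equals exactly $\mathfrak{m}(\tilde{\mathbf c}_1)+\mathfrak{m}(\tilde{\mathbf c}_2)$.

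For the cost identity (\ref{M_G_decomposition}) I would use that $\mathbf{M}_\alpha$ is a sum of $w(e)^\alpha\,length(e)$ over edges, together with the crucial observation that the weight carried by each edge inside $G_1$ (resp. $G_2$, $G_3$) is identical to the weight it carried in $G$: downstream of a pure child the entire flow serves $\tilde\Theta_1$ (resp. $\tilde\Theta_2$), so no flow is lost when the subtree is detached, and on $G_3$ the deposit at $P$ accounts precisely for the forwarded flow. Hence the edge-by-edge weights match and $\mathbf{M}_\alpha(G)=\mathbf{M}_\alpha(G_1)+\mathbf{M}_\alpha(G_2)+\mathbf{M}_\alpha(G_3)$. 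To obtain the optimality of each piece and the distance identity (\ref{d_a_sum}), I would run a cut-and-paste argument: if some $G_i$ were not optimal between its own endpoints, replacing it by a strictly cheaper path $G_i'$ and reusing the other two pieces would produce a competitor in $Path\left(\mathfrak{m}(\mathbf c)\delta_O,\mathbf c\right)$ (the boundaries add up because $\tilde{\mathbf c}_1+\tilde{\mathbf c}_2+\bar{\mathbf c}=\mathbf c$ and the $\delta_P$ terms cancel) of cost at most $\mathbf{M}_\alpha(G_1')+\mathbf{M}_\alpha(G_2)+\mathbf{M}_\alpha(G_3)<\mathbf{M}_\alpha(G)=d_\alpha(\mathfrak{m}(\mathbf c)\delta_O,\mathbf c)$, contradicting optimality of $G$. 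Thus each $G_i$ realizes $d_\alpha$ for its endpoints, and (\ref{d_a_sum}) follows by substituting $\mathbf{M}_\alpha(G_i)=d_\alpha(\cdots)$ into (\ref{M_G_decomposition}).

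The main obstacle I anticipate is purely the combinatorial existence step: guaranteeing a single vertex $P$ from which two disjoint, nonempty, single-type subtrees emanate, and in particular cleanly disposing of the degenerate configuration in which the second type is represented only by a target sitting at $P$. Everything afterward — the mass balance, the edge-by-edge weight matching that yields additivity, and the cut-and-paste optimality — is routine once the separating vertex and the purity of its downstream subtrees are in place.
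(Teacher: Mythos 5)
Your proof is correct and takes essentially the same route as the paper: your ``deepest mixed vertex'' is precisely an endpoint $P\neq O$ of the set $\Gamma =\Gamma _{1}\cap \Gamma _{2}$ (the intersection of the unions of curves to $\Theta _{1}$- and $\Theta _{2}$-targets) that the paper extracts via a contractibility/Euler-characteristic argument, and your subsequent three-piece split at $P$, edge-by-edge weight matching for (\ref{M_G_decomposition}), and cut-and-paste optimality argument for (\ref{d_a_sum}) coincide with the paper's. The only cosmetic difference is that the paper defines $\tilde{\Theta}_{i}=\left\{ j\in \Theta _{i}:P\in \gamma _{z_{j}}\right\} $, which automatically includes any target sitting at $P$ and makes both $\tilde{\Theta}_{i}$ nonempty at once, so the degenerate configuration you patch separately (a trivial zero-cost $G_{2}$ when the second type is represented only by a target at $P$) never requires special treatment.
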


\begin{proof}
For any $z$ on the support of $G$, since $G$ is a transport path from a
single source $O$, there exists a unique curve $\gamma _{z}$ on $G$ from $O$
to $z$. Also, it is easily observed that
\begin{equation}
\text{\textit{if }}\tilde{z}\text{\textit{\ lies on }}\gamma _{z}\text{%
\textit{\ for some }}z\text{\textit{, then }}\gamma _{\tilde{z}}\text{%
\textit{\ is the part of }}\gamma _{z}\text{\textit{\ from }}O\text{\textit{%
\ to }}\tilde{z}.  \label{single_source}
\end{equation}

Now, let $\Gamma _{i}$ be the union of all curves $\gamma _{z_{j}}$ with $%
j\in \Theta _{i}$ for $i=1,2$, and set
\begin{equation*}
\Gamma =\Gamma _{1}\cap \Gamma _{2}.
\end{equation*}%
By (\ref{single_source}), if $z\in \Gamma $, then $\gamma _{z}\subseteq
\Gamma $. This shows that $\Gamma $ is a connected subset of the support of $%
G$ containing $O$. Since $\Gamma $ contains no cycles, it is a contractible
set containing $O$. Then, by calculating the Euler characteristic number of $%
\Gamma $, we have either $\Gamma =\left\{ O\right\} $ or $\Gamma $ has at
least two endpoints (i.e. vertices of degree $1$).

If $\Gamma =\left\{ O\right\} $, then set $P=O$, and $\tilde{\Theta}%
_{i}=\Theta _{i}$ for $i=1,2$. If $\Gamma \neq \left\{ O\right\} $, pick $P$
to be an endpoint of \ $\Gamma $ with $P\neq O$. Since $P\in \Gamma
\subseteq \Gamma _{i}$, the set
\begin{equation*}
\tilde{\Theta}_{i}:=\left\{ j\in \Theta _{i}:P\in \gamma _{z_{j}}\right\}
\neq \emptyset ,\text{ for }i=1,2.
\end{equation*}

For any $j\in \tilde{\Theta}_{i}$, $P$ divides the curve $\gamma _{z_{j}}$
into two parts: $\gamma _{z_{j}}^{\left( 1\right) }$ from $O$ to $P$ and $%
\gamma _{z_{j}}^{\left( 2\right) }$ from $P$ to $z_{j}$. Since $P$ is an
endpoint of $\Gamma $, we have
\begin{equation*}
\left( \gamma _{z_{j}}^{\left( 2\right) }\backslash \left\{ P\right\}
\right) \cap \Gamma =\emptyset \text{.}
\end{equation*}%
For $i=1,2$, define $\mathbf{\tilde{c}}_{i}$ using (\ref{equation_c_tilde})
and denote the part of $G$ from $\mathfrak{m}\left( \mathbf{\tilde{c}}%
_{i}\right) \delta _{P}$ to $\mathbf{\tilde{c}}_{i}$ by $G_{i}$. The rest of
$G$ is denoted by $G_{3}=G-\left( G_{1}+G_{2}\right) $. Then, by
construction, $\left\{ G_{i}\right\} _{i=1}^{3}$ are pairwise disjoint
except at $P$, and thus (\ref{M_G_decomposition}) holds. By the optimality
of $G$, each $G_{i}$ must also be optimal for $i=1,2,3$, which yields (\ref%
{d_a_sum}).
\end{proof}

The following theorem states that: under an optimal assignment map, a
household will be assigned to some factory only when either she lives close
to the factory or she has some nearby neighbors assigned to the factory. In
the first situation, the planner takes advantage of relative spatial
locations between households and factories; while in the second situation,
the planner takes advantage of group transportation due to transport economy
of scale embedded in ramified transport technology.

Let $S\in Map\left[ \ell ,k\right] $ be an optimal assignment map. For each $%
i\in \left\{ 1,\cdots ,k\right\} $, define%
\begin{equation}
\Psi _{i}=\left\{ y_{j}:S\left( j\right) =i\right\} \cup \left\{
x_{i}\right\}  \label{Psi}
\end{equation}%
and%
\begin{equation}
R_{i}=\max \left\{ \left\Vert z-p-\pi \left( z\right) v\right\Vert :z\in
\left\{ y_{j}:S\left( j\right) =i\right\} \cup \left\{ x_{1},\cdots
,x_{k}\right\} \right\} .  \label{R_i}
\end{equation}

\begin{theorem}
\label{autarky} Suppose $S\in Map\left[ \ell ,k\right] $ is an optimal
assignment map. Then, for any $i\in \left\{ 1,\cdots ,k\right\} $ and $j\in
\left\{ 1,\cdots ,\ell \right\} $ with $S\left( j\right) =i$, there exists $%
z\in \Psi _{i}\backslash \left\{ y_{j}\right\} $, such that
\begin{equation}
0<\left\vert \pi \left( y_{j}\right) -\pi \left( z\right) \right\vert \leq
2CR_{i}+\min_{i^{\ast }\neq i}\left\vert \pi \left( x_{i^{\ast }}\right)
-\pi \left( y_{j}\right) \right\vert  \label{projection_closeness}
\end{equation}%
and $\pi \left( z\right) $ is between $\pi \left( y_{j}\right) $ and $\pi
\left( x_{i}\right) $, where $C$ and $R_{i}$ are the constants given in (\ref%
{ConstantC}) and (\ref{R_i}) respectively.
\end{theorem}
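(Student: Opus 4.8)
The plan is to argue by contradiction after reducing the statement to a one–dimensional projection inequality. Replacing $v$ by $-v$ if necessary, I may assume $\pi(x_i)<\pi(y_j)$ (the case $\pi(x_i)=\pi(y_j)$ is immediate). Set $\tau:=\max\{\pi(z):z\in\Psi_i,\ \pi(z)<\pi(y_j)\}$; since $x_i\in\Psi_i$ with $\pi(x_i)<\pi(y_j)$, this maximum exists and $\tau\ge\pi(x_i)$, so the point $z\in\Psi_i\setminus\{y_j\}$ realizing it automatically has $\pi(z)$ strictly between $\pi(x_i)$ and $\pi(y_j)$. Thus it suffices to prove $\pi(y_j)-\tau\le 2CR_i+m^\ast$, where $m^\ast:=\min_{i^\ast\ne i}|\pi(x_{i^\ast})-\pi(y_j)|$. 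Assume instead $\pi(y_j)-\tau>2CR_i+m^\ast$; then the slab $(\tau,\pi(y_j))$ contains the projection of no point of $\Psi_i$, so the co-assigned households split into a \emph{lower} group $\Theta_1=\{j':S(j')=i,\ \pi(y_{j'})\le\tau\}$ and an \emph{upper} group $\Theta_2=\{j':S(j')=i,\ \pi(y_{j'})\ge\pi(y_j)\}$ (with $j\in\Theta_2$), separated in projection by more than $2CR_i+m^\ast$.

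Next I would exploit the decomposition furnished by Theorem \ref{Theorem 1} and Proposition \ref{Path_map}, which write an optimal allocation path as a disjoint union of optimal single-source paths $G_i\in Path(\mathbf a_i,\mathbf b_i)$. Applying Lemma \ref{decomposition} to $G_i$ with the partition $\Theta_1\amalg\Theta_2$ produces a branch vertex $P$, nonempty sub-clusters $\tilde{\mathbf c}_1,\tilde{\mathbf c}_2$, and optimal sub-paths $G_1,G_2,G_3$, with $G_\nu$ serving $\tilde{\mathbf c}_\nu$ from $P$. The upper estimate uses optimality of the \emph{map}: reassigning $\tilde\Theta_2$ to the projection-nearest factory $i^\ast$ and comparing $\mathbf E_\alpha$, the saving at $i$ is at least $\mathbf M_\alpha(G_2)=d_\alpha(\mathfrak m(\tilde{\mathbf c}_2)\delta_P,\tilde{\mathbf c}_2)$ (detach $G_2$ and thin the trunk, using monotonicity of $w\mapsto w^\alpha$), while the increase at $i^\ast$ is at most $d_\alpha(\mathfrak m(\tilde{\mathbf c}_2)\delta_{x_{i^\ast}},\tilde{\mathbf c}_2)$ (sub-additivity of $d_\alpha$). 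Optimality of $S$ then forces $d_\alpha(\mathfrak m(\tilde{\mathbf c}_2)\delta_P,\tilde{\mathbf c}_2)\le d_\alpha(\mathfrak m(\tilde{\mathbf c}_2)\delta_{x_{i^\ast}},\tilde{\mathbf c}_2)$, and feeding this into Lemma \ref{Comparison} with the cut $t_1=\pi(y_j)$ yields $\pi(y_j)-\pi(P)\le CR_i+m^\ast$, after bounding the transverse radius $R$ of Lemma \ref{Comparison} by $R_i$ via the containment of an optimal path in the convex hull of its source and targets.

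For the lower estimate I would use optimality of the \emph{path}: the competitor that detaches $G_1$ from $P$ and instead serves $\tilde{\mathbf c}_1$ by a fresh optimal path from $x_i$ (again thinning the trunk feeding $P$) is admissible in $Path(\mathbf a_i,\mathbf b_i)$, so optimality of $G_i$ gives $d_\alpha(\mathfrak m(\tilde{\mathbf c}_1)\delta_P,\tilde{\mathbf c}_1)\le d_\alpha(\mathfrak m(\tilde{\mathbf c}_1)\delta_{x_i},\tilde{\mathbf c}_1)$. Applying Lemma \ref{Comparison} to $\tilde{\mathbf c}_1$ with source $P$ and comparison point $x_i$, cutting at the top projection level of $\tilde{\mathbf c}_1$, then controls how far $P$ may overshoot above the lower group and gives $\pi(P)-\tau\le CR_i$. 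Adding the two estimates produces $\pi(y_j)-\tau\le 2CR_i+m^\ast$, contradicting the standing assumption; the betweenness of $\pi(z)$ is built into the choice of $\tau$.

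The hard part will be this last step, namely controlling the branch vertex $P$ from below. Two issues require care: bounding the transverse deviation of $P$ so that the radius in Lemma \ref{Comparison} is genuinely at most $R_i$, for which I would invoke that an optimal transport path lies in the convex hull of its marginals; and extracting the clean constant $CR_i$ without a spurious $|\pi(x_i)-\tau|$ term, which forces a careful choice of cut level and comparison source and uses that the topmost point of the lower group already belongs to $\Psi_i$. The degenerate configurations—an empty lower sub-cluster $\tilde\Theta_1$ (handled by reassigning all of $\mathbf b_i$ and comparing $x_i$ with $x_{i^\ast}$ directly through Lemma \ref{Comparison}) and the possibility $\pi(P)>\pi(y_j)$—should be disposed of by the same comparison inequalities applied at the two endpoints of the slab.
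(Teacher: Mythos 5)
Your overall architecture coincides with the paper's: contradiction from a projection gap, the empty-group degenerate case handled by reassigning the whole cluster $\mathbf{b}_i$ to $x_{i^{\ast }}$ through Lemma \ref{Comparison}, and in the main case the branch vertex $P$ from Lemma \ref{decomposition} together with reroute-and-compare steps fed through Lemma \ref{Comparison}. Your \emph{upper} estimate is sound, and your convex-hull remark is the right way to justify that $P$ contributes at most $R_{i}$ to the radius in Lemma \ref{Comparison} (a point the paper uses implicitly). The genuine gap is your \emph{lower} estimate, exactly the step you flagged as hard. From optimality of $G_{i}$ you do get $d_{\alpha }\left( \mathfrak{m}\left( \mathbf{\tilde{c}}_{1}\right) \delta _{P},\mathbf{\tilde{c}}_{1}\right) \leq d_{\alpha }\left( \mathfrak{m}\left( \mathbf{\tilde{c}}_{1}\right) \delta _{x_{i}},\mathbf{\tilde{c}}_{1}\right) $, but Lemma \ref{Comparison} with comparison point $x_{i}$ and any cut level $t_{1}\leq \tau $ only yields $\pi \left( P\right) -t_{1}\leq CR_{i}+\left\vert \pi \left( x_{i}\right) -t_{1}\right\vert $, and the term $\left\vert \pi \left( x_{i}\right) -t_{1}\right\vert $ is not removable by a cleverer cut: it reflects a true failure of the inference, not lemma slack. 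Take $\mathbf{\tilde{c}}_{1}$ a single atom at $z_{0}$ with $\pi \left( z_{0}\right) =\tau $ and $\pi \left( x_{i}\right) =\tau -L$ with $L$ large; then the displayed $d_{\alpha }$-inequality reads $\left\Vert P-z_{0}\right\Vert \leq \left\Vert x_{i}-z_{0}\right\Vert $, which is compatible with $\pi \left( P\right) -\tau $ of order $L$. So $\pi \left( P\right) -\tau \leq CR_{i}$ simply does not follow from comparing $P$ against the factory, and noting that the topmost point of the lower group lies in $\Psi _{i}$ does not help, since a household location is not an admissible source.

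The fix is the paper's device: never compare with $x_{i}$, and replace your two-estimate additive scheme by a two-case split on the position of $P$ relative to a single threshold $t_{2}$ inside the gap (chosen with $\pi \left( y_{j}\right) -t_{2}>2CR_{i}+m^{\ast }$ in your orientation). If $\pi \left( P\right) \leq t_{2}+CR_{i}$, then $\pi \left( P\right) <\pi \left( y_{j}\right) $ automatically, and your upper-estimate comparison alone gives $d_{\alpha }\left( \mathbf{\tilde{c}}_{2},\mathfrak{m}\left( \mathbf{\tilde{c}}_{2}\right) \delta _{P}\right) -d_{\alpha }\left( \mathbf{\tilde{c}}_{2},\mathfrak{m}\left( \mathbf{\tilde{c}}_{2}\right) \delta _{x_{i^{\ast }}}\right) \geq \mathfrak{m}\left( \mathbf{\tilde{c}}_{2}\right) ^{\alpha }\left( \pi \left( y_{j}\right) -t_{2}-2CR_{i}-m^{\ast }\right) >0$, a contradiction by itself --- no lower estimate needed, and this also disposes of your unresolved caveat $\pi \left( P\right) >\pi \left( y_{j}\right) $, which falls into the other case. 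If instead $\pi \left( P\right) >t_{2}+CR_{i}$, use that the trunk $\gamma _{P}$ runs from $x_{i}$ (projection $\leq \tau <t_{2}$) to $P$ (projection $>t_{2}$), hence crosses level $t_{2}$ at a point $Q$ on $\gamma _{P}$; reroute $\mathbf{\tilde{c}}_{1}$ to be served from $Q$, removing the flow $\mathfrak{m}\left( \mathbf{\tilde{c}}_{1}\right) \gamma _{QP}$. This competitor stays in $Path\left( \mathbf{a}_{i},\mathbf{b}_{i}\right) $, and Lemma \ref{Comparison} with cut $t_{1}=t_{2}$ and comparison point $Q$ has $\left\vert \pi \left( Q\right) -t_{1}\right\vert =0$, so the spurious term vanishes and the cost strictly drops by at least $\mathfrak{m}\left( \mathbf{\tilde{c}}_{1}\right) ^{\alpha }\left( \pi \left( P\right) -t_{2}-CR_{i}\right) >0$, contradicting optimality of $G_{i}$. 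In short: the comparison source controlling $P$ from the factory side must be the trunk crossing point $Q$, not $x_{i}$, and the two comparisons must be deployed in mutually exclusive cases rather than added.
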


\begin{proof}
Without loss of generality, we may assume that $\pi \left( y_{j}\right) \leq
\pi \left( x_{i}\right) $. Let
\begin{equation*}
\left\vert \pi \left( x_{i^{\ast }}\right) -\pi \left( y_{j}\right)
\right\vert =\min_{i^{\ast \ast }\neq i}\left\vert \pi \left( x_{i^{\ast
\ast }}\right) -\pi \left( y_{j}\right) \right\vert
\end{equation*}%
for some $i^{\ast }\neq i$. We want to prove (\ref{projection_closeness}) by
contradiction. Assume for any $z\in \Psi _{i}\backslash \left\{
y_{j}\right\} $ with $\pi \left( y_{j}\right) <\pi \left( z\right) \leq \pi
\left( x_{i}\right) $,%
\begin{equation*}
\left\vert \pi \left( y_{j}\right) -\pi \left( z\right) \right\vert
>2CR_{i}+\left\vert \pi \left( x_{i^{\ast }}\right) -\pi \left( y_{j}\right)
\right\vert ,
\end{equation*}%
i.e.%
\begin{equation*}
\pi \left( z\right) -\pi \left( y_{j}\right) >2CR_{i}+\left\vert \pi \left(
x_{i^{\ast }}\right) -\pi \left( y_{j}\right) \right\vert .
\end{equation*}%
Then, there exists a real number $t_{2}$ such that
\begin{equation}
\pi \left( z\right) >t_{2}>\pi \left( y_{j}\right) +2CR_{i}+\left\vert \pi
\left( x_{i^{\ast }}\right) -\pi \left( y_{j}\right) \right\vert
\label{t_2_condition}
\end{equation}%
whenever $\pi \left( y_{j}\right) <\pi \left( z\right) \leq \pi \left(
x_{i}\right) $. In particular,
\begin{equation}
\pi \left( x_{i}\right) >t_{2}>\pi \left( y_{j}\right) +2CR_{i}+\left\vert
\pi \left( x_{i^{\ast }}\right) -\pi \left( y_{j}\right) \right\vert
\label{x_i_condition}
\end{equation}%
as $x_{i}\in \Psi _{i}$. As a result, $S^{-1}\left( i\right) $ can be
expressed as the disjoint union of two sets:%
\begin{equation*}
S^{-1}\left( i\right) =\Theta _{1}\amalg \Theta _{2},
\end{equation*}%
where
\begin{equation}
\Theta _{1}:=\left\{ h\in S^{-1}\left( i\right) :\pi \left( y_{h}\right)
\leq \pi \left( y_{j}\right) \right\} \text{, }\Theta _{2}:=\left\{ h\in
S^{-1}\left( i\right) :\text{ }\pi \left( y_{h}\right) >t_{2}\right\} .
\label{Sigma12}
\end{equation}%
Clearly, $j\in \Theta _{1}$. If $\Theta _{2}=\emptyset $, then $S^{-1}\left(
i\right) =\Theta _{1}$ and thus
\begin{equation*}
\pi \left( x_{i}\right) \geq \pi \left( y_{j}\right) \geq \max_{h\in
S^{-1}\left( i\right) }\pi \left( y_{h}\right) .
\end{equation*}%
Let $\mathbf{a}_{i}$ and $\mathbf{b}_{i}$ be given as in (\ref{a_b_i}). By
Lemma $\ref{Comparison}$ with $t_{1}=\pi \left( y_{j}\right) $, we have
\begin{eqnarray*}
&&d_{\alpha }\left( \mathbf{b}_{i},\mathfrak{m}\left( \mathbf{b}_{i}\right)
\delta _{x_{i}}\right) -d_{\alpha }\left( \mathbf{b}_{i},\mathfrak{m}\left(
\mathbf{b}_{i}\right) \delta _{x_{i^{\ast }}}\right) \\
&\geq &\mathfrak{m}\left( \mathbf{b}_{i}\right) ^{\alpha }\left( |\pi \left(
x_{i}\right) -\pi \left( y_{j}\right) |-CR_{i}-|\pi \left( x_{i^{\ast
}}\right) -\pi \left( y_{j}\right) |\right) \\
&\geq &\mathfrak{m}\left( \mathbf{b}_{i}\right) ^{\alpha }\left( \pi \left(
x_{i}\right) -\pi \left( y_{j}\right) -CR_{i}-|\pi \left( x_{i^{\ast
}}\right) -\pi \left( y_{j}\right) |\right) \\
&>&\mathfrak{m}\left( \mathbf{b}_{i}\right) ^{\alpha }CR_{i}>0,\text{ by (%
\ref{x_i_condition}),}
\end{eqnarray*}%
a contradiction to the optimality of $S$. Thus, $\Theta _{2}\neq \emptyset $.

Let $G_{i}\in Path\left( \mathbf{a}_{i},\mathbf{b}_{i}\right) $ be an
optimal transport path. Then, by Theorem \ref{Theorem 1} and optimality of $%
S $, $G=\sum_{i}G_{i}$ is an optimal allocation path. Since both $\Theta _{1}
$ and $\Theta _{2}$ are nonempty, by setting $\Theta =S^{-1}\left( i\right)
\,=\Theta _{1}\amalg \Theta _{2},O=x_{i}$ and $\mathbf{c=b}_{i}$ in Lemma %
\ref{decomposition}, there exists a point $P\in V\left( G_{i}\right) $ such
that $G_{i}$ can be decomposed as%
\begin{equation*}
G_{i}=G_{i}^{\left( 1\right) }+G_{i}^{\left( 2\right) }+G_{i}^{\left(
3\right) }
\end{equation*}%
with%
\begin{equation}
\mathbf{M}_{\alpha }\left( G_{i}\right) =\mathbf{M}_{\alpha }\left(
G_{i}^{\left( 1\right) }\right) +\mathbf{M}_{\alpha }\left( G_{i}^{\left(
2\right) }\right) +\mathbf{M}_{\alpha }\left( G_{i}^{\left( 3\right)
}\right) .  \label{equation_M_G_i}
\end{equation}%
Here, for $h=1,2$, $G_{i}^{\left( h\right) }$ is an optimal transport path
from $\mathfrak{m}\left( \mathbf{\tilde{b}}_{i}^{\left( h\right) }\right)
\delta _{P}$ to $\mathbf{\tilde{b}}_{i}^{\left( h\right) }$ for some
positive atomic measures $\mathbf{\tilde{b}}_{i}^{\left( h\right) }$ with $%
spt\left( \mathbf{\tilde{b}}_{i}^{\left( h\right) }\right) \subseteq \left\{
y_{\tilde{h}}:\tilde{h}\in \Theta _{h}\right\} $ and
\begin{equation*}
G_{i}^{\left( 3\right) }\in Path\left( \mathfrak{m}\left( \mathbf{b}%
_{i}\right) \delta _{x_{i}},\mathbf{b}_{i}-\left( \mathbf{\tilde{b}}%
_{i}^{\left( 1\right) }+\mathbf{\tilde{b}}_{i}^{\left( 2\right) }\right)
\mathbf{+}\left( \mathfrak{m}\left( \mathbf{\tilde{b}}_{i}^{\left( 1\right)
}\right) +\mathfrak{m}\left( \mathbf{\tilde{b}}_{i}^{\left( 2\right)
}\right) \right) \delta _{P}\right) .
\end{equation*}

If $\pi \left( P\right) \geq t_{2}-CR_{i}$, then we can modify $G$ into
another allocation path $\tilde{G}$ by just replacing the corresponding
transport path from factory $i$ to households $\mathbf{\tilde{b}}%
_{i}^{\left( 1\right) }$ with an optimal transport path from factory $%
i^{\ast }$ to $\mathbf{\tilde{b}}_{i}^{\left( 1\right) }$. More precisely,
we replace $G_{i}$ by
\begin{equation}
\tilde{G}_{i}=\tilde{G}_{i}^{\left( 1\right) }+\tilde{G}_{i}^{\left(
2\right) }+\tilde{G}_{i}^{\left( 3\right) },  \label{G_bar}
\end{equation}%
where $\tilde{G}_{i}^{\left( 1\right) }$ is an optimal transport path from $%
\mathfrak{m}\left( \mathbf{\tilde{b}}_{i}^{\left( 1\right) }\right) \delta
_{x_{i^{\ast }}}$ to $\mathbf{\tilde{b}}_{i}^{\left( 1\right) }$, $\tilde{G}%
_{i}^{\left( 2\right) }=G_{i}^{\left( 2\right) }$ and
\begin{equation}
\tilde{G}_{i}^{\left( 3\right) }=G_{i}^{\left( 3\right) }-\mathfrak{m}\left(
\mathbf{\tilde{b}}_{i}^{\left( 1\right) }\right) \gamma _{P},  \label{G3}
\end{equation}%
where $\gamma _{P}$ is the curve on $G$ from $x_{i}$ to $P$. Equation (\ref%
{G_bar}) and (\ref{G3}) imply respectively
\begin{equation*}
\mathbf{M}_{\alpha }\left( \tilde{G}_{i}\right) \leq \mathbf{M}_{\alpha
}\left( \tilde{G}_{i}^{\left( 1\right) }\right) +\mathbf{M}_{\alpha }\left(
G_{i}^{\left( 2\right) }\right) +\mathbf{M}_{\alpha }\left( \tilde{G}%
_{i}^{\left( 3\right) }\right)
\end{equation*}%
and
\begin{equation*}
\mathbf{M}_{\alpha }\left( \tilde{G}_{i}^{\left( 3\right) }\right) \leq
\mathbf{M}_{\alpha }\left( G_{i}^{\left( 3\right) }\right) .
\end{equation*}%
Consequently, by (\ref{equation_M_G_i}),
\begin{eqnarray}
&&\mathbf{M}_{\alpha }\left( G_{i}\right) -\mathbf{M}_{\alpha }\left( \tilde{%
G}_{i}\right)  \notag \\
&\geq &\mathbf{M}_{\alpha }\left( G_{i}^{\left( 1\right) }\right) -\mathbf{M}%
_{\alpha }\left( \tilde{G}_{i}^{\left( 1\right) }\right) +\left( \mathbf{M}%
_{\alpha }\left( G_{i}^{\left( 3\right) }\right) -\mathbf{M}_{\alpha }\left(
\tilde{G}_{i}^{\left( 3\right) }\right) \right)  \notag \\
&\geq &\mathbf{M}_{\alpha }\left( G_{i}^{\left( 1\right) }\right) -\mathbf{M}%
_{\alpha }\left( \tilde{G}_{i}^{\left( 1\right) }\right)  \notag \\
&=&d_{\alpha }\left( \mathfrak{m}\left( \mathbf{\tilde{b}}_{i}^{\left(
1\right) }\right) \delta _{P},\mathbf{\tilde{b}}_{i}^{\left( 1\right)
}\right) -d_{\alpha }\left( \mathfrak{m}\left( \mathbf{\tilde{b}}%
_{i}^{\left( 1\right) }\right) \delta _{x_{i^{\ast }}},\mathbf{\tilde{b}}%
_{i}^{\left( 1\right) }\right) ,  \label{Comparison_G_i}
\end{eqnarray}%
where the last equality follows from the optimality of both $G_{i}^{\left(
1\right) }$ and $\tilde{G}_{i}^{\left( 1\right) }$.

Since $\alpha <1$, for $\tilde{G}=\sum_{s\neq i}G_{s}+\tilde{G}_{i}$, we have%
\begin{equation*}
\mathbf{M}_{\alpha }\left( \tilde{G}\right) \leq \sum_{s\neq i}\mathbf{M}%
_{\alpha }\left( G_{s}\right) +\mathbf{M}_{\alpha }\left( \tilde{G}%
_{i}\right) .
\end{equation*}%
Due to the optimality of $G$, equation (\ref{equation(M_G)}) says%
\begin{equation*}
\mathbf{M}_{\alpha }\left( G\right) =\sum_{s\neq i}\mathbf{M}_{\alpha
}\left( G_{s}\right) +\mathbf{M}_{\alpha }\left( G_{i}\right) .
\end{equation*}%
As a result,%
\begin{eqnarray*}
&&\mathbf{M}_{\alpha }\left( G\right) -\mathbf{M}_{\alpha }\left( \tilde{G}%
\right) \\
&\geq &\mathbf{M}_{\alpha }\left( G_{i}\right) -\mathbf{M}_{\alpha }\left(
\tilde{G}_{i}\right) \\
&\geq &d_{\alpha }\left( \mathfrak{m}\left( \mathbf{\tilde{b}}_{i}^{\left(
1\right) }\right) \delta _{P},\mathbf{\tilde{b}}_{i}^{\left( 1\right)
}\right) -d_{\alpha }\left( \mathfrak{m}\left( \mathbf{\tilde{b}}%
_{i}^{\left( 1\right) }\right) \delta _{x_{i^{\ast }}},\mathbf{\tilde{b}}%
_{i}^{\left( 1\right) }\right) \text{, by (\ref{Comparison_G_i})} \\
&\geq &\mathfrak{m}\left( \mathbf{\tilde{b}}_{i}^{\left( 1\right) }\right)
^{\alpha }\left( |\pi \left( P\right) -\pi \left( y_{j}\right) |-CR_{i}-|\pi
\left( x_{i^{\ast }}\right) -\pi \left( y_{j}\right) |\right) \text{, by
Lemma }\ref{Comparison}\text{ } \\
&\geq &\mathfrak{m}\left( \mathbf{\tilde{b}}_{i}^{\left( 1\right) }\right)
^{\alpha }\left( \pi \left( P\right) -\pi \left( y_{j}\right) -CR_{i}-|\pi
\left( x_{i^{\ast }}\right) -\pi \left( y_{j}\right) |\right) \\
&\geq &\mathfrak{m}\left( \mathbf{\tilde{b}}_{i}^{\left( 1\right) }\right)
^{\alpha }\left( t_{2}-\pi \left( y_{j}\right) -2CR_{i}-|\pi \left(
x_{i^{\ast }}\right) -\pi \left( y_{j}\right) |\right) >0\text{, by (\ref%
{t_2_condition}).}
\end{eqnarray*}%
Thus, $\mathbf{M}_{\alpha }\left( G\right) >\mathbf{M}_{\alpha }\left(
\tilde{G}\right) $, which contradicts the optimality of $G$, and thus the
inequality (\ref{projection_closeness}) must hold.

If $\pi \left( P\right) <t_{2}-CR_{i}$, then let $Q$ be the first point of $%
\gamma _{P}$ with $\pi \left( Q\right) =t_{2}$. We can modify $G$ into
another allocation path $\bar{G}$ by just replacing the corresponding
transport path from the point $Q$ to households $\mathbf{\tilde{b}}%
_{i}^{\left( 2\right) }$ with an optimal transport path from $Q$ to $\mathbf{%
\tilde{b}}_{i}^{\left( 2\right) }$. More precisely, we replace $G_{i}$ by
\begin{equation*}
\bar{G}_{i}=\bar{G}_{i}^{\left( 1\right) }+\bar{G}_{i}^{\left( 2\right) }+%
\bar{G}_{i}^{\left( 3\right) },
\end{equation*}%
where $\bar{G}_{i}^{\left( 1\right) }=G_{i}^{\left( 1\right) }$, $\tilde{G}%
_{i}^{\left( 2\right) }$ is an optimal transport path from $\mathfrak{m}%
\left( \mathbf{\tilde{b}}_{i}^{\left( 2\right) }\right) \delta _{Q}$ to $%
\mathbf{\tilde{b}}_{i}^{\left( 2\right) }$, and
\begin{equation*}
\tilde{G}_{i}^{\left( 3\right) }=G_{i}^{\left( 3\right) }-\mathfrak{m}\left(
\mathbf{\tilde{b}}_{i}^{\left( 2\right) }\right) \gamma _{QP},
\end{equation*}%
where $\gamma _{QP}$ is the part of the curve $\gamma _{P}$ from $Q$ to $P$.
Similar arguments as in the previous case show that
\begin{eqnarray*}
&&\mathbf{M}_{\alpha }\left( G\right) -\mathbf{M}_{\alpha }\left( \bar{G}%
\right) \\
&\geq &d_{\alpha }\left( \mathfrak{m}\left( \mathbf{\tilde{b}}_{i}^{\left(
2\right) }\right) \delta _{P},\mathbf{\tilde{b}}_{i}^{\left( 2\right)
}\right) -d_{\alpha }\left( \mathfrak{m}\left( \mathbf{\tilde{b}}%
_{i}^{\left( 2\right) }\right) \delta _{Q},\mathbf{\tilde{b}}_{i}^{\left(
2\right) }\right) \\
&\geq &\mathfrak{m}\left( \mathbf{\tilde{b}}_{i}^{\left( 2\right) }\right)
^{\alpha }\left( t_{2}-\pi \left( P\right) -CR_{i}\right) >0\text{, by Lemma
}\ref{Comparison}\text{ }.
\end{eqnarray*}%
Thus $\mathbf{M}_{\alpha }\left( G\right) >\mathbf{M}_{\alpha }\left( \bar{G}%
\right) $, which contradicts the optimality of $G$. Therefore, the
inequality (\ref{projection_closeness}) must hold.
\end{proof}

The following corollary states a scenario when a factory is located far away
from the community of households, a planner will never assign any production
to this factory under any optimal assignment map.

\begin{corollary}
\label{far away}Suppose for some $i\in \left\{ 1,\cdots ,k\right\} $,
\begin{equation}
|\pi \left( x_{i}\right) -\pi \left( y_{j}\right) |>2CR+\min_{i^{\ast }\neq
i}|\pi \left( x_{i^{\ast }}\right) -\pi \left( y_{j}\right) |
\label{projection_far}
\end{equation}%
for each $j=1,\cdots ,\ell $, where $C$ is the constant given in (\ref%
{ConstantC}) and
\begin{equation}
R=\max \left\{ \left\Vert z-p-\pi \left( z\right) v\right\Vert :z\in \left\{
y_{1},\cdots ,y_{\ell }\right\} \cup \left\{ x_{1},\cdots ,x_{k}\right\}
\right\} .  \label{constant_R}
\end{equation}%
Then, $S^{-1}\left( i\right) =\emptyset $ for any optimal assignment map $%
S\in Map\left[ \ell ,k\right] $.
\end{corollary}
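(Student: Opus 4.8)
The plan is to argue by contradiction using Theorem \ref{autarky}, together with two elementary bookkeeping observations. Suppose, contrary to the claim, that $S^{-1}(i)\neq\emptyset$ for some optimal assignment map $S$. First I would record that $R_i\le R$: the set $\{y_j:S(j)=i\}\cup\{x_1,\cdots,x_k\}$ appearing in the definition (\ref{R_i}) of $R_i$ is contained in the set $\{y_1,\cdots,y_\ell\}\cup\{x_1,\cdots,x_k\}$ appearing in (\ref{constant_R}), so the maximum defining $R_i$ is taken over a smaller set. Second, hypothesis (\ref{projection_far}) forces $|\pi(x_i)-\pi(y_j)|>0$, hence $\pi(y_j)\neq\pi(x_i)$, for every $j$.

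The heart of the argument is to apply Theorem \ref{autarky} to a carefully chosen household. Among all households assigned to factory $i$, I would select $y_{j^*}$ whose projection is closest to that of $x_i$, that is, $|\pi(y_{j^*})-\pi(x_i)|=\min\{|\pi(y_h)-\pi(x_i)|:h\in S^{-1}(i)\}$; this minimum exists since $S^{-1}(i)$ is finite and nonempty. Applying Theorem \ref{autarky} to the index $j^*$ (legitimate because $S(j^*)=i$) produces a point $z\in\Psi_i\setminus\{y_{j^*}\}$ whose projection lies between $\pi(y_{j^*})$ and $\pi(x_i)$ and which satisfies $0<|\pi(y_{j^*})-\pi(z)|\le 2CR_i+\min_{i^*\neq i}|\pi(x_{i^*})-\pi(y_{j^*})|$.

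The crux is to show that this $z$ must be $x_i$ itself. By the definition (\ref{Psi}), $\Psi_i$ consists of $x_i$ together with the households assigned to $i$, so either $z=x_i$ or $z=y_{j'}$ for some $j'\in S^{-1}(i)$ with $j'\neq j^*$. In the latter case, since $\pi(z)$ lies between $\pi(y_{j^*})$ and $\pi(x_i)$ but differs from $\pi(y_{j^*})$, it is strictly on the $x_i$-side, giving $|\pi(y_{j'})-\pi(x_i)|<|\pi(y_{j^*})-\pi(x_i)|$; this contradicts the minimality in the choice of $y_{j^*}$. Hence $z=x_i$, and therefore $|\pi(y_{j^*})-\pi(x_i)|=|\pi(y_{j^*})-\pi(z)|\le 2CR_i+\min_{i^*\neq i}|\pi(x_{i^*})-\pi(y_{j^*})|\le 2CR+\min_{i^*\neq i}|\pi(x_{i^*})-\pi(y_{j^*})|$, where the last step uses $R_i\le R$. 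This directly contradicts (\ref{projection_far}) applied with $j=j^*$, which finishes the proof.

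The main obstacle, and the only genuinely non-routine point, is the selection step: one must pick the extremal household so that the intermediate point supplied by Theorem \ref{autarky} cannot be another household assigned to $i$ and is thereby forced to coincide with $x_i$. Choosing the household whose projection is nearest to $\pi(x_i)$ accomplishes exactly this, because any admissible $z$ is required to lie strictly on the $x_i$-side of $y_{j^*}$, hence strictly nearer to $x_i$ in projection. The remaining ingredients—the monotonicity $R_i\le R$ and the exclusion of the degenerate case $\pi(y_j)=\pi(x_i)$ via (\ref{projection_far})—are straightforward.
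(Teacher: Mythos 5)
Your proof is correct and takes essentially the same route as the paper's: both apply Theorem \ref{autarky} to an extremal household in $S^{-1}\left( i\right) $ chosen so that the intermediate point $z$ cannot be another household assigned to $i$ and is therefore forced to equal $x_{i}$, and both then reach a contradiction with (\ref{projection_far}) via the monotonicity $R_{i}\leq R$. The only (cosmetic) difference is your extremal selection---minimizing $\left\vert \pi \left( y_{h}\right) -\pi \left( x_{i}\right) \right\vert $ over $S^{-1}\left( i\right) $ rather than, as in the paper, taking the maximal $\pi \left( y_{h}\right) $ below $\pi \left( x_{i}\right) $ after a without-loss-of-generality orientation---which handles both sides at once but is otherwise the same argument.
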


\begin{proof}
Assume there exists $j\in \left\{ 1,\cdots ,\ell \right\} $ with $S\left(
j\right) =i$. Without loss of generality, we may assume $\pi \left(
x_{i}\right) \geq \pi \left( y_{j}\right) $. Thus, there exists an $j^{\ast
}\in S^{-1}\left( i\right) $ such that
\begin{equation*}
\pi \left( y_{j^{\ast }}\right) =\max \left\{ \pi \left( y_{h}\right) :\pi
\left( x_{i}\right) \geq \pi \left( y_{h}\right) \text{, }S\left( h\right)
=i\right\} .
\end{equation*}%
For this $j^{\ast }$, by Theorem \ref{autarky}, there exists a $z\in \Psi
_{i}$ with $\pi \left( y_{j^{\ast }}\right) <\pi \left( z\right) \leq \pi
\left( x_{i}\right) $ satisfying (\ref{projection_closeness}). By the
maximality of $\pi \left( y_{j^{\ast }}\right) $, $z\neq y_{h}$ for any $%
y_{h}\in \Psi _{i}$. Thus, $z=x_{i}$ and by (\ref{projection_far}),
\begin{eqnarray*}
\pi \left( x_{i}\right) &>&\pi \left( y_{j^{\ast }}\right)
+2CR+\min_{i^{\ast }\neq i}|\pi \left( x_{i^{\ast }}\right) -\pi \left(
y_{j^{\ast }}\right) | \\
&\geq &\pi \left( y_{j^{\ast }}\right) +2CR_{i}+\min_{i^{\ast }\neq i}|\pi
\left( x_{i^{\ast }}\right) -\pi \left( y_{j^{\ast }}\right) |,
\end{eqnarray*}%
a contradiction with (\ref{projection_closeness}).
\end{proof}

The next corollary shows an \textquotedblleft autarky\textquotedblright\
situation: if households and factories are located on two disjoint areas
lying distant away from each other, then the demand of households will
solely be satisfied from factories within the same area.

\begin{corollary}
\label{autarky1}Suppose
\begin{equation}
\left\{ \pi \left( x_{1}\right) ,\cdots ,\pi \left( x_{k}\right) ,\pi \left(
y_{1}\right) ,\cdots ,\pi \left( y_{\ell }\right) \right\} \cap \left(
t_{1},t_{2}\right) =\emptyset  \label{empty_zone}
\end{equation}%
for some $t_{1},$ $t_{2}\in \mathbb{R}$ with
\begin{equation*}
t_{2}=t_{1}+2CR+\sigma ,\sigma >0
\end{equation*}%
where the constants $C$ and $R$ are given in (\ref{ConstantC}) and (\ref%
{constant_R}). If
\begin{equation}
\left\{ \pi \left( x_{1}\right) ,\cdots ,\pi \left( x_{k}\right) \right\}
\cap (t_{1}-\sigma ,t_{1}]\neq \emptyset \text{ and }\left\{ \pi \left(
x_{1}\right) ,\cdots ,\pi \left( x_{k}\right) \right\} \cap \lbrack
t_{2},t_{2}+\sigma )\neq \emptyset ,  \label{nonempty_zone}
\end{equation}%
then for any optimal assignment map $S\in Map\left[ \ell ,k\right] $ and
interval $I=(-\infty ,t_{1}]$ or $[t_{2},\infty )$, we have
\begin{equation*}
\pi \left( y_{j}\right) \in I\iff \pi \left( x_{S\left( j\right) }\right)
\in I,
\end{equation*}%
for $j=1,\cdots ,\ell $.
\end{corollary}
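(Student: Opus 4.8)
The plan is to reformulate the desired biconditional as the statement that no household is ``misassigned'' across the gap, and then to exclude misassignment by an extremal argument driven by Theorem \ref{autarky}.

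First I would note that condition (\ref{empty_zone}) forces every projected point $\pi(x_i)$ and $\pi(y_j)$ to lie in the disjoint union $(-\infty,t_1]\cup[t_2,\infty)$. Hence for each $j$ exactly one of $\pi(y_j)\in(-\infty,t_1]$ or $\pi(y_j)\in[t_2,\infty)$ holds, and similarly for $\pi(x_{S(j)})$; so establishing the claimed equivalence for both admissible intervals $I$ is the same as showing that $y_j$ and its factory $x_{S(j)}$ always lie on the same side of the gap. This rules out two mirror-image configurations: a left household ($\pi(y_j)\le t_1$) sent to a right factory ($\pi(x_{S(j)})\ge t_2$), and the reverse. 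Replacing $v$ by $-v$ sends $\pi$ to $-\pi$ while leaving $C$, $R$, and each $R_i$ invariant and swapping the two halves of (\ref{nonempty_zone}); this reflection interchanges the two configurations, so it suffices to exclude the first.

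Next I would argue by contradiction with a maximal misassigned household. Assuming some left household is sent to a right factory, let $j^{\ast}$ maximize $\pi(y_{j^{\ast}})$ among all such misassigned left households and put $i=S(j^{\ast})$, so $\pi(y_{j^{\ast}})\le t_1$ and $\pi(x_i)\ge t_2$. Applying Theorem \ref{autarky} to $j^{\ast}$ yields $z\in\Psi_i\setminus\{y_{j^{\ast}}\}$ with $\pi(z)$ between $\pi(y_{j^{\ast}})$ and $\pi(x_i)$ and $0<|\pi(y_{j^{\ast}})-\pi(z)|$, i.e.\ $\pi(y_{j^{\ast}})<\pi(z)\le\pi(x_i)$. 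Since $z$ is either $x_i$ or a household, $\pi(z)\notin(t_1,t_2)$; and if $\pi(z)\le t_1$ then $z\ne x_i$ forces $z=y_h$ with $S(h)=i$ and $\pi(y_{j^{\ast}})<\pi(y_h)\le t_1$, making $h$ a misassigned left household of strictly larger projection and contradicting maximality. Thus $\pi(z)\ge t_2$, and (\ref{projection_closeness}) gives
\[
t_2-\pi(y_{j^{\ast}})\le\pi(z)-\pi(y_{j^{\ast}})\le 2CR_i+\min_{i^{\ast}\ne i}|\pi(x_{i^{\ast}})-\pi(y_{j^{\ast}})|.
\]

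Finally I would extract the contradiction from the width of the gap. Using $R_i\le R$ (the defining set of $R_i$ in (\ref{R_i}) is contained in that of $R$ in (\ref{constant_R})) together with $t_2=t_1+2CR+\sigma$, the displayed inequality rearranges to
\[
\min_{i^{\ast}\ne i}|\pi(x_{i^{\ast}})-\pi(y_{j^{\ast}})|\ge t_2-\pi(y_{j^{\ast}})-2CR\ge t_1+\sigma-\pi(y_{j^{\ast}}).
\]
By the left half of (\ref{nonempty_zone}) there is a factory $x_{i_0}$ with $\pi(x_{i_0})\in(t_1-\sigma,t_1]$, and $\pi(x_{i_0})\le t_1<t_2\le\pi(x_i)$ gives $i_0\ne i$, so $x_{i_0}$ is admissible in the minimum. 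Checking the two orderings of $\pi(y_{j^{\ast}})$ and $\pi(x_{i_0})$ using only $\pi(y_{j^{\ast}})\le t_1$ and $t_1-\sigma<\pi(x_{i_0})\le t_1$ shows $|\pi(x_{i_0})-\pi(y_{j^{\ast}})|<t_1+\sigma-\pi(y_{j^{\ast}})$, contradicting the lower bound above. This excludes the misassignment configuration, and the mirror case follows by the reflection noted in the first paragraph (now invoking the right half of (\ref{nonempty_zone})).

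The hard part will be the final squeeze: one must absorb the slack $2CR$ produced by Theorem \ref{autarky} (stated with the local $R_i$) into the gap width $2CR+\sigma$ (built from the global $R$) via $R_i\le R$, and then verify that the factory guaranteed by (\ref{nonempty_zone}) sits close enough in projection to the extremal household $y_{j^{\ast}}$ to break the derived lower bound in both orderings of $\pi(y_{j^{\ast}})$ relative to $\pi(x_{i_0})$.
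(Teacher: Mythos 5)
Your proposal is correct and takes essentially the same approach as the paper's proof: both select a misassigned household of maximal projection, apply Theorem \ref{autarky} to obtain $z\in \Psi_{i}$, use (\ref{empty_zone}) and maximality to force $\pi\left( z\right) \geq t_{2}$, and contradict this via the nearby factory supplied by (\ref{nonempty_zone}) together with $R_{i}\leq R$ and $t_{2}=t_{1}+2CR+\sigma $. The only cosmetic differences are that you maximize over all misassigned households rather than over those assigned to a fixed factory, rearrange the final squeeze as bounds on $\left\vert \pi \left( x_{i_{0}}\right) -\pi \left( y_{j^{\ast }}\right) \right\vert$ rather than on $\pi\left( z\right)$, and spell out the reflection $v\mapsto -v$ that the paper leaves implicit in reducing to one direction.
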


\begin{proof}
It is sufficient to prove that if $\pi \left( x_{i}\right) \in \lbrack
t_{2},\infty )$ for some $i$, then the set
\begin{equation*}
\left\{ \pi \left( y_{h}\right) :S\left( h\right) =i\text{, }\pi \left(
y_{h}\right) \leq t_{1}\right\}
\end{equation*}%
must be empty. Indeed, if not, pick
\begin{equation*}
\pi \left( y_{j}\right) =\max \left\{ \pi \left( y_{h}\right) :S\left(
h\right) =i\text{, }\pi \left( y_{h}\right) \leq t_{1}\right\}
\end{equation*}%
for some $j$. By Theorem \ref{autarky}, there exists $z\in \Psi _{i}$ such
that $\pi \left( y_{j}\right) <\pi \left( z\right) \leq \pi \left(
x_{i}\right) $, and
\begin{eqnarray*}
\pi \left( z\right) -\pi \left( y_{j}\right) &\leq &2CR_{i}+\min_{i^{\ast
}\neq i}\left\vert \pi \left( x_{i^{\ast }}\right) -\pi \left( y_{j}\right)
\right\vert \\
&<&2CR_{i}+t_{1}-\pi \left( y_{j}\right) +\sigma \text{, by (\ref%
{nonempty_zone})} \\
&\leq &t_{2}-\pi \left( y_{j}\right) .
\end{eqnarray*}%
Thus, $\pi \left( z\right) <t_{2}$. On the other hand, the maximality of $%
\pi \left( y_{j}\right) $ and (\ref{empty_zone}) yield $\pi \left( z\right)
\geq t_{2}$, a contradiction.
\end{proof}

As a direct application of Corollary \ref{autarky1}, the next corollary
states that households living in a relatively isolated area are more likely
to receive their commodity from local factories.

\begin{corollary}
\label{autarky2}Let $t_{1}^{-}<$ $t_{2}^{-}<t_{1}^{+}<t_{2}^{+}$ be real
numbers with
\begin{equation*}
t_{2}^{-}=t_{1}^{-}+2CR+\sigma \text{ and }t_{1}^{+}=t_{1}^{+}+2CR+\sigma ,%
\text{ }\sigma >0,
\end{equation*}%
where the\ constants $C$ and $R$ are given in (\ref{ConstantC}) and (\ref%
{constant_R}). If
\begin{equation*}
\left\{ \pi \left( x_{1}\right) ,\cdots ,\pi \left( x_{k}\right) ,\pi \left(
y_{1}\right) ,\cdots ,\pi \left( y_{\ell }\right) \right\} \cap \left(
\left( t_{1}^{-},t_{2}^{-}\right) \cup \left( t_{1}^{+},t_{2}^{+}\right)
\right) =\emptyset ,
\end{equation*}%
\begin{equation}
\left\{ \pi \left( x_{1}\right) ,\cdots ,\pi \left( x_{k}\right) \right\}
\cap \left[ t_{2}^{-},t_{1}^{+}\right] =\left\{ \pi \left( x_{i}\right)
\right\}  \label{single_factory}
\end{equation}%
for some $i\in \left\{ 1,\cdots ,k\right\} $, and
\begin{equation*}
\left\{ \pi \left( x_{1}\right) ,\cdots ,\pi \left( x_{k}\right) \right\}
\cap (t_{1}^{-}-\sigma ,t_{1}^{-}]\neq \emptyset \text{, }\left\{ \pi \left(
x_{1}\right) ,\cdots ,\pi \left( x_{k}\right) \right\} \cap \lbrack
t_{2}^{+},t_{2}^{+}+\sigma )\neq \emptyset ,
\end{equation*}%
then for any optimal assignment map $S\in Map\left[ \ell ,k\right] $,
\begin{equation*}
S^{-1}\left( i\right) =\left\{ j:\pi \left( y_{j}\right) \in \left[
t_{2}^{-},t_{1}^{+}\right] \right\} .
\end{equation*}
\end{corollary}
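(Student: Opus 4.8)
The plan is to derive Corollary~\ref{autarky2} from two applications of Corollary~\ref{autarky1}, one for each of the two empty corridors $(t_1^-,t_2^-)$ and $(t_1^+,t_2^+)$, glued together by the single--factory condition~(\ref{single_factory}). First I would apply Corollary~\ref{autarky1} to the corridor $(t_1^-,t_2^-)$. Its hypotheses require a factory in $(t_1^--\sigma,t_1^-]$, which is exactly the outer--left condition assumed here, and a factory in $[t_2^-,t_2^-+\sigma)$, for which I would try to use the middle factory $x_i$ (recall $\pi(x_i)\in[t_2^-,t_1^+]$). Granting these, Corollary~\ref{autarky1} with $I=[t_2^-,\infty)$ gives, for every $j$, the equivalence $\pi(y_j)\ge t_2^-\iff\pi(x_{S(j)})\ge t_2^-$. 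Symmetrically, applying Corollary~\ref{autarky1} to $(t_1^+,t_2^+)$ — with the outer--right factory in $[t_2^+,t_2^++\sigma)$ and again $x_i$ as the near factory on the left of that corridor — yields $\pi(y_j)\le t_1^+\iff\pi(x_{S(j)})\le t_1^+$.

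These two equivalences would then close the argument cleanly. If $S(j)=i$, then $\pi(x_{S(j)})=\pi(x_i)\in[t_2^-,t_1^+]$, so the two ``$\Leftarrow$'' directions force $\pi(y_j)\in[t_2^-,t_1^+]$; hence $S^{-1}(i)\subseteq\{j:\pi(y_j)\in[t_2^-,t_1^+]\}$. Conversely, if $\pi(y_j)\in[t_2^-,t_1^+]$, the two ``$\Rightarrow$'' directions give $\pi(x_{S(j)})\in[t_2^-,t_1^+]$, and since by~(\ref{single_factory}) the only factory whose projection lies in that band is $x_i$, we get $S(j)=i$. Combining the two inclusions produces the claimed equality $S^{-1}(i)=\{j:\pi(y_j)\in[t_2^-,t_1^+]\}$.

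The main obstacle is verifying the two \emph{inner} hypotheses of Corollary~\ref{autarky1}, i.e. that $x_i$ genuinely qualifies as the near--corridor factory, with $\pi(x_i)$ within $\sigma$ of $t_2^-$ and of $t_1^+$. This is automatic only when the isolated band $[t_2^-,t_1^+]$ is itself narrow; in general $x_i$ may sit far inside the band and the black--box application of Corollary~\ref{autarky1} is not licensed. In that case I would bypass Corollary~\ref{autarky1} and instead re-run the argument of its proof directly from Theorem~\ref{autarky}. Concretely, to rule out that an intruding factory $x_{i^\ast}$ with $\pi(x_{i^\ast})\le t_1^-$ serves a band household, I would select the band household $y_{j_0}$ served by $i^\ast$ of \emph{minimal} projection, apply Theorem~\ref{autarky} to $y_{j_0}$, and bound $\min_{m\ne i^\ast}|\pi(x_m)-\pi(y_{j_0})|$ by $|\pi(x_i)-\pi(y_{j_0})|$ via~(\ref{single_factory}). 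Since the corridor width is $t_2^--t_1^-=2CR+\sigma$ with $R\ge R_{i^\ast}$, the intermediate vertex $z$ supplied by Theorem~\ref{autarky} would then be pushed above $t_1^-$ yet below $\pi(y_{j_0})$, so it lands either in the empty corridor $(t_1^-,t_2^-)$ (impossible) or at a band household of $i^\ast$ below $y_{j_0}$ (contradicting minimality).

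The delicate point I expect to wrestle with is the sub-case $\pi(x_i)>\pi(y_{j_0})$, where the middle factory lies on the far side of the contested household from the intruder: there the estimate $|\pi(x_i)-\pi(y_{j_0})|\le\pi(y_{j_0})-t_2^-$ no longer applies and the bound degrades, so a naive distance comparison could even let the intruding factory win. Resolving this is the crux, and I anticipate it forces either an appeal to the narrowness of the band built into the intended hypotheses, or a simultaneous bookkeeping of the symmetric estimate coming from the second corridor (using the outer--right factory) so that a household on either side of $x_i$ is controlled by the corridor facing it. I would devote the bulk of the effort to pinning down this sub-case, since the remaining steps are routine once both corridor estimates are in hand.
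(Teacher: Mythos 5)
Your first two paragraphs are, in substance, the paper's entire proof: the paper applies Corollary \ref{autarky1} once with $(t_{1},t_{2})=(t_{1}^{-},t_{2}^{-})$ and once with $(t_{1},t_{2})=(t_{1}^{+},t_{2}^{+})$, obtains exactly your two equivalences $\pi \left( y_{j}\right) \geq t_{2}^{-}\iff \pi \left( x_{S\left( j\right) }\right) \geq t_{2}^{-}$ and $\pi \left( y_{j}\right) \leq t_{1}^{+}\iff \pi \left( x_{S\left( j\right) }\right) \leq t_{1}^{+}$, and then closes with (\ref{single_factory}) by the same two-inclusion argument you describe. So your main route coincides with the paper's, and to reproduce the paper you would simply stop after your second paragraph.

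The obstacle you raise in your last two paragraphs is real, but you should know that the paper does not address it: it invokes Corollary \ref{autarky1} as a black box for both corridors without verifying the inner halves of hypothesis (\ref{nonempty_zone}), i.e.\ without checking that some factory projects into $[t_{2}^{-},t_{2}^{-}+\sigma )$ and into $(t_{1}^{+}-\sigma ,t_{1}^{+}]$ — nothing in the stated hypotheses forces $\pi \left( x_{i}\right) $ to lie within $\sigma $ of either inner edge of the band. Your accounting of where this bites is accurate: the displayed proof of Corollary \ref{autarky1} (factory right of a corridor serves no household left of it) uses only the outer factory near $t_{1}$, and this suffices for the forward inclusion $S^{-1}\left( i\right) \subseteq \left\{ j:\pi \left( y_{j}\right) \in \left[ t_{2}^{-},t_{1}^{+}\right] \right\} $, since the two outer conditions are assumed; but the reverse inclusion consumes the mirrored directions (household in $I$ implies factory in $I$), whose proof needs the inner factory near $t_{2}$. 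Your diagnosis of the troublesome sub-case $\pi \left( x_{i}\right) >\pi \left( y_{j_{0}}\right) $ is likewise on target: if the band $\left[ t_{2}^{-},t_{1}^{+}\right] $ is wide and $x_{i}$ sits near its far edge, Theorem \ref{autarky} yields no contradiction for a band household near $t_{2}^{-}$ assigned to the outer-left factory (the term $\min_{m\neq i^{\ast }}\left\vert \pi \left( x_{m}\right) -\pi \left( y_{j_{0}}\right) \right\vert $ is then large and the needed inequality holds trivially with $z=x_{i^{\ast }}$), so the reverse inclusion genuinely requires either a narrow band or the missing inner-factory hypotheses. In short, your extra effort locates a lacuna in the paper's own argument rather than in yours; your fallback, which you correctly concede does not close that sub-case, is diligence beyond what the paper's proof actually does.
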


\begin{proof}
For any $j\in S^{-1}\left( i\right) $, using $t_{1}=t_{1}^{-}$, $%
t_{2}=t_{2}^{-}$ in Corollary \ref{autarky1}, and the fact $\pi \left(
x_{i}\right) \in \lbrack t_{2}^{-},\infty )$, we have $\pi \left(
y_{j}\right) \in \lbrack t_{2}^{-},\infty )$. Similarly, using $%
t_{1}=t_{1}^{+}$, $t_{2}=t_{2}^{+}$ in Corollary \ref{autarky1}, and the
fact $\pi \left( x_{i}\right) \in (-\infty ,t_{1}^{+}]$, we have $\pi \left(
y_{j}\right) \in (-\infty ,t_{1}^{+}]$. Thus, $\pi \left( y_{j}\right) \in %
\left[ t_{2}^{-},t_{1}^{+}\right] $. \ This shows that $S^{-1}\left(
i\right) \subseteq \left\{ j:\pi \left( y_{j}\right) \in \left[
t_{2}^{-},t_{1}^{+}\right] \right\} $.

On the other hand, for any $j$ with $\pi \left( y_{j}\right) \in \left[
t_{2}^{-},t_{1}^{+}\right] $, we have $\pi \left( y_{j}\right) \in \lbrack
t_{2}^{-},\infty )$ and $\pi \left( y_{j}\right) \in (-\infty ,t_{1}^{+}]$.
Using Corollary \ref{autarky1} again, we have $\pi \left( x_{S\left(
j\right) }\right) \in \lbrack t_{2}^{-},\infty )$ and $\pi \left( x_{S\left(
j\right) }\right) \in (-\infty ,t_{1}^{+}]$. Thus, $\pi \left( x_{S\left(
j\right) }\right) \in \left[ t_{2}^{-},t_{1}^{+}\right] $. By (\ref%
{single_factory}), $S\left( j\right) =i$. Therefore, $\left\{ j:\pi \left(
y_{j}\right) \in \left[ t_{2}^{-},t_{1}^{+}\right] \right\} \subseteq
S^{-1}\left( i\right) $.
\end{proof}

\section{State matrix}

In this section, we show that the properties of optimal assignment maps
explored in previous sections can shed light on the search for those maps.
The analysis is built upon a notion of state matrix defined as follows.

\begin{definition}
Let $U=\left( u_{sh}\right) $ be an $k\times \ell $ matrix with $u_{sh}\in
\left\{ 0,1\right\} $. The matrix $U$ is called

\begin{enumerate}
\item a state matrix for an optimal assignment map $S$ if $S\left( h\right)
\neq s$ whenever $u_{sh}=0$.

\item a uniform state matrix if $U$ is a state matrix for any optimal
assignment map.
\end{enumerate}
\end{definition}

One could think of a state matrix as an information set of a planner during
the search process for optimal assignment maps. An entry $u_{sh}=0$ (or $%
u_{sh}=1$) simply denotes that the planner has (or has not) excluded the
possibility of assigning household $h$ to factory $s$. Recall that finding
an optimal assignment map is to minimize the functional $\mathbf{E}_{\alpha
}\left( S;\mathbf{x},\mathbf{b}\right) $ over the set $Map\left[ \ell ,k%
\right] $ whose cardinality is $k^{\ell }$. Any zero entry of a state matrix
$U$ for an optimal assignment map $S$ may exclude as many as $k^{\ell -1}$
assignment maps in $Map\left[ \ell ,k\right] $ from being $S$. The more zero
entries in a state matrix $U$, the more information about $S$ is contained
in $U$. Consequently, we aim at finding a state matrix $U$~for $S$ with as
many zero entries as possible, using properties of optimal assignment maps
studied in previous sections. When $U$ has exactly one non-zero entry in
each column, $S$ is completely determined by those non-zero entries in $U$.

We first explore the implication of Theorem \ref{Theorem marginal} on the
search for optimal assignment maps in the context of state matrix. For any
state matrix $U=\left( u_{ij}\right) $, we consider a $k\times \ell $ matrix
\begin{equation*}
W_{U}=\left( w_{ij}\left( U\right) \right) \text{ }
\end{equation*}%
where
\begin{equation*}
w_{ij}\left( U\right) =\rho _{\alpha }\left( w_{i}\left( U\right)
,n_{j}\right) \text{ with }w_{i}\left( U\right) :=\sum_{h=1}^{\ell
}u_{ih}n_{h},
\end{equation*}%
and the function $\rho _{\alpha }$ is given in (\ref{W-function}). Here, $%
w_{i}\left( U\right) $ denotes the maximum amount of commodity produced at
factory $i$ one could conjecture using the existing information in state
matrix $U.$

For any state matrix $U$, define%
\begin{equation*}
\digamma ^{s}\left( U;n_{j}\right) :=\bigcup_{i\neq s}\left\{ z\in \mathbb{R}%
^{m}:\left\Vert z-x_{i}\right\Vert <w_{sj}\left( U\right) \left\Vert
z-x_{s}\right\Vert \right\}
\end{equation*}%
and%
\begin{equation*}
\Omega ^{s}\left( U;n_{j}\right) :=\bigcap_{i\neq s}\left\{ z\in \mathbb{R}%
^{m}:\left\Vert z-x_{s}\right\Vert <w_{ij}\left( U\right) \left\Vert
z-x_{i}\right\Vert \right\} ,
\end{equation*}%
for any $s=1,\cdots ,k$ and $j=1,\cdots ,\ell $. By Lemma \ref{lemma ball},
each $\digamma ^{s}\left( U;n_{j}\right) $ is the union of $k-1$ open balls%
\begin{equation*}
\digamma ^{s}\left( U;n_{j}\right) =\bigcup_{i\neq s}B\left( x_{i}+\frac{%
\left( w_{sj}\left( U\right) \right) ^{2}}{1-\left( w_{sj}\left( U\right)
\right) ^{2}}\left( x_{i}-x_{s}\right) ,\frac{w_{sj}\left( U\right) }{%
1-\left( w_{sj}\left( U\right) \right) ^{2}}\left\Vert
x_{s}-x_{i}\right\Vert \right) ,
\end{equation*}%
and each set $\Omega ^{s}\left( U;n_{j}\right) $ is the intersection of $k-1$
open balls%
\begin{equation*}
\Omega ^{s}\left( U;n_{j}\right) =\bigcap_{i\neq s}B\left( x_{s}+\frac{%
\left( w_{ij}\left( U\right) \right) ^{2}}{1-\left( w_{ij}\left( U\right)
\right) ^{2}}\left( x_{s}-x_{i}\right) ,\frac{w_{ij}\left( U\right) }{%
1-\left( w_{ij}\left( U\right) \right) ^{2}}\left\Vert
x_{i}-x_{s}\right\Vert \right) .
\end{equation*}

\begin{example}
The matrix%
\begin{equation}
U^{\left( 0\right) }=\left( u_{ij}\right) \text{ with }u_{ij}=1\text{ for
any }i\text{ and }j  \label{U_0}
\end{equation}%
is a uniform state matrix. Then, $W_{U^{\left( 0\right) }}=\left(
w_{ij}\left( U^{\left( 0\right) }\right) \right) $ with
\begin{equation*}
w_{ij}\left( U^{\left( 0\right) }\right) =\rho _{\alpha }\left(
\sum_{h=1}^{\ell }n_{h},n_{j}\right) =\rho _{\alpha }\left( 1,n_{j}\right)
\end{equation*}%
which is independent of $i$. Here,
\begin{equation*}
\digamma ^{s}\left( U^{\left( 0\right) };n_{j}\right) =\digamma ^{s}\left(
n_{j}\right) \text{ and }\Omega ^{s}\left( U^{\left( 0\right) };n_{j}\right)
=\Omega ^{s}\left( n_{j}\right) ,
\end{equation*}%
where $\digamma ^{s}\left( n_{j}\right) $ and $\Omega ^{s}\left(
n_{j}\right) $ are given in (\ref{F0(nj)}) and (\ref{Omega0(nj)}).
\end{example}

\begin{example}
Let $S\in Map\left[ \ell ,k\right] $ be an optimal assignment map. Define
\begin{equation}
U_{S}=\left( u_{ij}\right)  \label{U_S}
\end{equation}%
with
\begin{equation*}
u_{ij}=\left\{
\begin{array}{cc}
1, & \text{if }S\left( j\right) =i \\
0, & \text{else}%
\end{array}%
\right. .
\end{equation*}%
Then, $W_{U_{S}}=\left( w_{ij}\left( U_{S}\right) \right) $ with
\begin{equation*}
w_{ij}\left( U_{S}\right) =\rho _{\alpha }\left( \sum_{S\left( h\right)
=i}n_{h},n_{j}\right) =\rho _{\alpha }\left( \mathfrak{m}\left( \mathbf{b}%
_{i}\right) ,n_{j}\right) .
\end{equation*}%
\ Note that
\begin{equation*}
\digamma ^{s}\left( U_{S};n_{j}\right) =\digamma _{S}^{s}\left( n_{j}\right)
\text{ and }\Omega ^{s}\left( U_{S};n_{j}\right) =\Omega _{S}^{s}\left(
n_{j}\right)
\end{equation*}%
where $\digamma _{S}^{s}\left( n_{j}\right) $ and $\Omega _{S}^{s}\left(
n_{j}\right) $ are given in (\ref{min_far}) and (\ref{min_close}).
\end{example}

\begin{definition}
Given two $k\times \ell $ real matrices $U=\left( u_{ij}\right) $ and $%
\tilde{U}=\left( \tilde{u}_{ij}\right) $, we define

\begin{enumerate}
\item $U\geq \tilde{U}$ if $u_{ij}\geq \tilde{u}_{ij}$ for each $i$ and $j$.

\item $U\gneq \tilde{U}$ if $U\geq \tilde{U}$ but $U\neq \tilde{U}$.
\end{enumerate}
\end{definition}

\begin{proposition}
Let $U$and $\tilde{U}$ be two state matrices for an optimal assignment map $%
S $. If $U\geq \tilde{U}$, then%
\begin{equation}
W_{U}\leq W_{\tilde{U}}  \label{W_comparison}
\end{equation}%
and
\begin{equation}
x_{s}\in \Omega ^{s}\left( U;n_{j}\right) \subseteq \Omega ^{s}\left( \tilde{%
U};n_{j}\right) \text{ and }\digamma ^{s}\left( U;n_{j}\right) \subseteq
\digamma ^{s}\left( \tilde{U};n_{j}\right) .  \label{Omega_inclusion}
\end{equation}
\end{proposition}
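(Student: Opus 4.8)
The plan is to reduce the proposition to two monotonicity facts already available: first, that the conjectured production $w_i(\cdot)$ is monotone nondecreasing in the state matrix, and second, that $\rho_\alpha(\cdot,n_j)$ is decreasing in its first argument (as recorded just after the definition (\ref{W-function})). Granting these, both the matrix inequality (\ref{W_comparison}) and the set inclusions (\ref{Omega_inclusion}) follow by direct inspection, with no delicate estimates required.

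First I would prove (\ref{W_comparison}). Fix any $i$ and $j$. Since $U\ge\tilde U$ gives $u_{ih}\ge\tilde u_{ih}$ for every $h$, and each demand satisfies $n_h>0$, multiplying by $n_h$ and summing over $h$ yields $w_i(U)=\sum_{h=1}^\ell u_{ih}n_h\ge\sum_{h=1}^\ell\tilde u_{ih}n_h=w_i(\tilde U)$. Applying the decreasing function $\rho_\alpha(\cdot,n_j)$ to this inequality reverses it, so $w_{ij}(U)=\rho_\alpha(w_i(U),n_j)\le\rho_\alpha(w_i(\tilde U),n_j)=w_{ij}(\tilde U)$. As $i,j$ were arbitrary, $W_U\le W_{\tilde U}$.

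Next I would deduce the inclusions in (\ref{Omega_inclusion}) from (\ref{W_comparison}). The key elementary remark is that, for fixed centers, the set $\{z:\|z-x_s\|<w\|z-x_i\|\}$ grows with the weight $w\ge 0$: enlarging $w$ can only make the strict inequality easier to satisfy. For the $\Omega$ inclusion, take $z\in\Omega^s(U;n_j)$; then for each $i\ne s$ we have $\|z-x_s\|<w_{ij}(U)\|z-x_i\|\le w_{ij}(\tilde U)\|z-x_i\|$ by (\ref{W_comparison}), so $z$ satisfies every defining inequality of $\Omega^s(\tilde U;n_j)$ and hence lies in their intersection. The same argument carried through a union gives the $\digamma$ inclusion: if $z\in\digamma^s(U;n_j)$ then $\|z-x_i\|<w_{sj}(U)\|z-x_s\|\le w_{sj}(\tilde U)\|z-x_s\|$ for some $i\ne s$, placing $z$ in $\digamma^s(\tilde U;n_j)$.

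Finally, the membership $x_s\in\Omega^s(U;n_j)$ is a one-line check: setting $z=x_s$ makes the left-hand side $\|x_s-x_s\|=0$, while for each $i\ne s$ the right-hand side $w_{ij}(U)\|x_s-x_i\|$ is strictly positive because the factory locations are distinct (so $\|x_s-x_i\|>0$) and $\rho_\alpha>0$ for $\alpha\in(0,1)$. Hence $x_s$ satisfies every defining strict inequality and lies in the intersection. I anticipate no genuine obstacle here; the only points deserving a word of care are that every $w_{ij}$ invoked is tacitly assumed well defined (i.e. $w_i(\cdot)\ge n_j$, which is consistent since $w_i(U)\ge w_i(\tilde U)$), and that the positivity $w_{ij}(U)>0$ used for $x_s\in\Omega^s$ relies on $\alpha\in(0,1)$ rather than the degenerate $\alpha=0$ case.
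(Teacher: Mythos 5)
Your proof is correct and follows essentially the same route as the paper's: establish $w_i(U)\geq w_i(\tilde U)$ from $U\geq\tilde U$, apply the monotonicity of $\rho_\alpha(\cdot,n_j)$ to get (\ref{W_comparison}), and then read off (\ref{Omega_inclusion}) from the definitions of the sets (the paper compresses this last step into ``by definition,'' which you simply spell out). Your closing caveats --- that each $w_{ij}$ must be well defined and that the membership $x_s\in\Omega^s(U;n_j)$ needs $w_{ij}(U)>0$, which can fail when $\alpha=0$ --- are fair observations the paper passes over silently, but they do not alter the argument.
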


\begin{proof}
For each $i$, since $U\geq \tilde{U}$,
\begin{equation*}
w_{i}\left( U\right) =\sum_{h=1}^{\ell }u_{ih}n_{h}\geq \sum_{h=1}^{\ell }%
\tilde{u}_{ih}n_{h}=w_{i}\left( \tilde{U}\right) .
\end{equation*}%
Then, since $\rho _{\alpha }\left( \cdot ,n_{j}\right) $ is decreasing, it
follows
\begin{equation*}
w_{ij}\left( U\right) =\rho _{\alpha }\left( w_{i}\left( U\right)
,n_{j}\right) \leq \rho _{\alpha }\left( w_{i}\left( \tilde{U}\right)
,n_{j}\right) =w_{ij}\left( \tilde{U}\right)
\end{equation*}%
for each $i$ and $j$. By definition, we have both (\ref{W_comparison}) and (%
\ref{Omega_inclusion}).
\end{proof}

Let $U$ be a state matrix for an optimal assignment map $S$. By definitions
of $U^{\left( 0\right) }$ in (\ref{U_0}) and $U_{S}$ in (\ref{U_S}), it
follows that
\begin{equation}
U^{\left( 0\right) }\geq U\geq U_{S}.  \label{U_inclusion}
\end{equation}%
Thus, by (\ref{Omega_inclusion}), we have
\begin{eqnarray*}
\digamma ^{s}\left( U^{\left( 0\right) };n_{j}\right) &\subseteq &\digamma
^{s}\left( U;n_{j}\right) \subseteq \digamma ^{s}\left( U_{S};n_{j}\right) ,
\\
\Omega ^{s}\left( U^{\left( 0\right) };n_{j}\right) &\subseteq &\Omega
^{s}\left( U;n_{j}\right) \subseteq \Omega ^{s}\left( U_{S};n_{j}\right) .
\end{eqnarray*}%
These relations, together with Theorem \ref{Theorem marginal}, immediately
imply the following proposition:

\begin{proposition}
\label{U_1_Prop}Let $U=\left( u_{sj}\right) $ be a state matrix for an
optimal assignment map $S\in Map\left[ \ell ,k\right] $. For some $s$ and $j$%
,

\begin{enumerate}
\item if $y_{j}\in \digamma ^{s}\left( U;n_{j}\right) $, then $S\left(
j\right) \neq s$;

\item if $y_{j}\in \Omega ^{s}\left( U;n_{j}\right) $, then $S\left(
j\right) =s.$
\end{enumerate}
\end{proposition}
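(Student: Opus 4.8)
The plan is to chain the set-inclusion relations established immediately above with the two assignment criteria of Theorem \ref{Theorem marginal}. The key structural fact I would invoke is that any state matrix $U$ for $S$ satisfies $U^{\left( 0\right) }\geq U\geq U_{S}$ by (\ref{U_inclusion}); combined with the monotonicity in (\ref{Omega_inclusion}) this yields
\[
\digamma^{s}\left( U;n_{j}\right) \subseteq \digamma^{s}\left( U_{S};n_{j}\right) \quad\text{and}\quad \Omega^{s}\left( U;n_{j}\right) \subseteq \Omega^{s}\left( U_{S};n_{j}\right).
\]

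First I would prove the exclusion claim. Assuming $y_{j}\in \digamma^{s}\left( U;n_{j}\right)$, the first inclusion gives $y_{j}\in \digamma^{s}\left( U_{S};n_{j}\right)$. Since the computation of $W_{U_{S}}$ in the example defining $U_{S}$ shows $\digamma^{s}\left( U_{S};n_{j}\right) =\digamma_{S}^{s}\left( n_{j}\right)$, we obtain $y_{j}\in \digamma_{S}^{s}\left( n_{j}\right)$, whereupon the first half of Theorem \ref{Theorem marginal} forces $S\left( j\right) \neq s$.

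The inclusion claim is completely parallel. Assuming $y_{j}\in \Omega^{s}\left( U;n_{j}\right)$, the second inclusion together with the identity $\Omega^{s}\left( U_{S};n_{j}\right) =\Omega_{S}^{s}\left( n_{j}\right)$ gives $y_{j}\in \Omega_{S}^{s}\left( n_{j}\right)$, and the second half of Theorem \ref{Theorem marginal} yields $S\left( j\right) =s$.

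Since every piece of technical content already resides in Theorem \ref{Theorem marginal} and in the monotonicity of $\rho_{\alpha}$, I do not anticipate any genuine obstacle. The only point demanding care is the \emph{direction} of each inclusion, which ultimately rests on the entrywise domination $U\geq U_{S}$: by definition a state matrix for $S$ has $u_{sh}=1$ whenever $S\left( h\right) =s$, so $U$ dominates $U_{S}$ and hence $w_{ij}\left( U\right) \leq w_{ij}\left( U_{S}\right)$, which is exactly what enlarges the regions $\digamma^{s}$ and $\Omega^{s}$ as one passes from $U$ to $U_{S}$. All remaining steps are direct substitutions.
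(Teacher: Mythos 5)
Your proposal is correct and follows essentially the same route as the paper: the paper likewise derives $\digamma^{s}\left( U;n_{j}\right) \subseteq \digamma^{s}\left( U_{S};n_{j}\right) =\digamma_{S}^{s}\left( n_{j}\right)$ and $\Omega^{s}\left( U;n_{j}\right) \subseteq \Omega^{s}\left( U_{S};n_{j}\right) =\Omega_{S}^{s}\left( n_{j}\right)$ from the domination $U\geq U_{S}$ in (\ref{U_inclusion}) via the monotonicity (\ref{Omega_inclusion}), and then invokes Theorem \ref{Theorem marginal}. Your verification of the inclusion directions through $w_{ij}\left( U\right) \leq w_{ij}\left( U_{S}\right)$ matches the paper's argument exactly.
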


\begin{corollary}
Suppose $U$ is a state matrix for an optimal assignment map $S\in Map\left[
\ell ,k\right] $. Let $\widehat{U}^{\left( 1\right) }=\left( \widehat{u}%
_{ij}^{\left( 1\right) }\right) $ be a $k\times \ell $ matrix with
\begin{equation}
\widehat{u}_{ij}^{\left( 1\right) }=\left\{
\begin{array}{cc}
0, & \text{if }y_{j}\in \digamma ^{i}\left( U;n_{j}\right)  \\
u_{ij}, & \text{else}%
\end{array}%
\right. ,  \label{u_ij_1}
\end{equation}%
Then, $\widehat{U}^{\left( 1\right) }$ is also a state matrix for $S$ with $%
U\geq \widehat{U}^{\left( 1\right) }$.
\end{corollary}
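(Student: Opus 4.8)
The plan is to verify directly that $\widehat{U}^{(1)}$ satisfies the two defining requirements: that it dominates $\widehat{U}^{(1)}$ in the matrix order (i.e. $U \geq \widehat{U}^{(1)}$) and that it remains a state matrix for $S$. Both follow essentially by unwinding the construction in (\ref{u_ij_1}) and invoking Proposition \ref{U_1_Prop}, so I expect no genuine obstacle here; the entire substance is already carried by that proposition.

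First I would establish $U \geq \widehat{U}^{(1)}$. By the definition (\ref{u_ij_1}), each entry $\widehat{u}_{ij}^{(1)}$ equals either $0$ or $u_{ij}$. Since $U$ is a state matrix, every $u_{ij}$ lies in $\{0,1\}$ and is in particular nonnegative, so in both branches we have $\widehat{u}_{ij}^{(1)} \leq u_{ij}$. This gives $U \geq \widehat{U}^{(1)}$, and it simultaneously shows that each $\widehat{u}_{ij}^{(1)} \in \{0,1\}$, so that $\widehat{U}^{(1)}$ is a legitimate $k \times \ell$ matrix of the required form.

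Next I would check the state-matrix property for $S$, namely that $S(j) \neq i$ whenever $\widehat{u}_{ij}^{(1)} = 0$. I would split into the two ways a zero can arise. In the first case, $\widehat{u}_{ij}^{(1)} = 0$ because $y_j \in \digamma^i(U; n_j)$; then part (1) of Proposition \ref{U_1_Prop} immediately yields $S(j) \neq i$. In the second case we are in the ``else'' branch, so $\widehat{u}_{ij}^{(1)} = u_{ij}$ and this common value equals $0$; since $U$ is by hypothesis a state matrix for $S$, the definition of state matrix gives $S(j) \neq i$. In either case $S(j) \neq i$, so $\widehat{U}^{(1)}$ is a state matrix for $S$.

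The only point requiring care is the bookkeeping that \emph{newly created} zeros cannot conflict with the true assignment of $S$, and this is precisely what Proposition \ref{U_1_Prop}(1) guarantees; the pre-existing zeros are handled by the assumption that $U$ is already a state matrix for $S$. Combining the two verifications completes the argument.
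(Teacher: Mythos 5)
Your proof is correct and follows essentially the same route as the paper's: the case split on how a zero entry of $\widehat{U}^{(1)}$ arises, handling pre-existing zeros via the hypothesis that $U$ is a state matrix for $S$ and newly created zeros via Proposition \ref{U_1_Prop}(1), is exactly the paper's argument. Your explicit verification of $U \geq \widehat{U}^{(1)}$ is a small bookkeeping addition the paper leaves implicit, but nothing substantively different.
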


\begin{proof}
If $\widehat{u}_{ij}^{\left( 1\right) }=0$, then either $u_{ij}=0$ or $%
y_{j}\in \digamma ^{i}\left( U;n_{j}\right) $. In the first case, since $U$
is a state matrix for $S$, by definition, $S\left( j\right) \neq i$. In the
second case, by Proposition \ref{U_1_Prop}, $S\left( j\right) \neq i$. Thus,
$\widehat{U}^{\left( 1\right) }$ is also a state matrix for $S$ with $U\geq
\widehat{U}^{\left( 1\right) }$.
\end{proof}

We now explore the implication of Theorem \ref{Theorem_neighborhood} on the
search for optimal assignment maps. Suppose $U$ is a state matrix for an
optimal assignment map $S$. If $u_{sh}=0$ for some $h\in \left\{ 1,\cdots
,\ell \right\} $ and $s\in \left\{ 1,\cdots ,k\right\} $, then for each $%
j\neq h$ with $n_{j}\leq n_{h}$, we consider the set%
\begin{equation}
\digamma ^{s,h}\left( U;n_{j}\right) :=\left\{ z\in \mathbb{R}%
^{m}:\left\Vert z-y_{h}\right\Vert +\Lambda \left( U\right) <w_{sj}\left(
U\right) \left\Vert z-x_{s}\right\Vert \right\} ,  \label{FishU}
\end{equation}%
where
\begin{equation*}
\Lambda \left( U\right) =\max \left\{ \frac{\rho _{\alpha }\left(
n_{h}+n_{j},n_{j}\right) }{w_{ij}\left( U\right) }\left\Vert
y_{h}-x_{i}\right\Vert :\text{for }i\in \left\{ 1,\cdots ,k\right\} \text{
with }u_{ih}=1\right\} .
\end{equation*}

\begin{lemma}
Let $U$ and $\tilde{U}$ be two state matrices for an optimal assignment map $%
S$. If $U\geq \tilde{U}$, then
\begin{equation}
\digamma ^{s,h}\left( U;n_{j}\right) \subseteq \digamma ^{s,h}\left( \tilde{U%
};n_{j}\right)  \label{FishU_comparison}
\end{equation}%
for any $s\in \left\{ 1,\cdots ,k\right\} $, $h\in \left\{ 1,\cdots ,\ell
\right\} $ with $u_{sh}=0$, and $n_{j}\leq n_{h}$ for $j\neq h$.
\end{lemma}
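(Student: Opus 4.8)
The plan is to reduce the set inclusion to two monotonicity facts about how the ingredients of $\digamma^{s,h}$ respond to enlarging the state matrix: the multiplicative coefficient $w_{sj}(U)$ decreases weakly, while the additive constant $\Lambda(U)$ increases weakly. Once both are established, membership in $\digamma^{s,h}(U;n_j)$ will propagate to membership in $\digamma^{s,h}(\tilde{U};n_j)$ by a direct chain of inequalities.

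First I would record that the hypothesis $u_{sh}=0$ together with $U\geq \tilde{U}$ forces $\tilde{u}_{sh}=0$ as well, since all entries lie in $\{0,1\}$; this guarantees that $\digamma^{s,h}(\tilde{U};n_j)$ is indeed defined. Next I would invoke (\ref{W_comparison}) from the preceding proposition, which gives $w_{sj}(U)\leq w_{sj}(\tilde{U})$. Multiplying by the nonnegative quantity $\left\Vert z-x_{s}\right\Vert$ then shows that the right-hand side of the defining inequality only grows when $U$ is replaced by $\tilde{U}$.

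The main point, and the step I expect to require the most care, is establishing $\Lambda(\tilde{U})\leq \Lambda(U)$. Here two effects conspire in the same direction. On the one hand, since $U\geq \tilde{U}$, the index set $\{i:\tilde{u}_{ih}=1\}$ over which the maximum defining $\Lambda(\tilde{U})$ is taken is contained in $\{i:u_{ih}=1\}$, so the maximum ranges over fewer candidates. On the other hand, again by (\ref{W_comparison}) we have $w_{ij}(U)\leq w_{ij}(\tilde{U})$, hence for each admissible $i$ the term $\frac{\rho_{\alpha}\left(n_{h}+n_{j},n_{j}\right)}{w_{ij}(\tilde{U})}\left\Vert y_{h}-x_{i}\right\Vert$ is no larger than its counterpart $\frac{\rho_{\alpha}\left(n_{h}+n_{j},n_{j}\right)}{w_{ij}(U)}\left\Vert y_{h}-x_{i}\right\Vert$. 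Since the factor $\rho_{\alpha}\left(n_{h}+n_{j},n_{j}\right)$ is a fixed constant independent of the state matrix, both effects shrink the maximum, yielding $\Lambda(\tilde{U})\leq \Lambda(U)$.

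Finally, for any $z\in \digamma^{s,h}(U;n_j)$, combining these observations gives
\begin{equation*}
\left\Vert z-y_{h}\right\Vert +\Lambda(\tilde{U})\leq \left\Vert z-y_{h}\right\Vert +\Lambda(U)<w_{sj}(U)\left\Vert z-x_{s}\right\Vert \leq w_{sj}(\tilde{U})\left\Vert z-x_{s}\right\Vert ,
\end{equation*}
which is exactly the condition for $z\in \digamma^{s,h}(\tilde{U};n_j)$. This establishes the inclusion (\ref{FishU_comparison}). The argument is essentially bookkeeping once the monotonicity of $\Lambda$ is in place, so the only delicate aspect is correctly tracking that shrinking the index set and decreasing each term both lower the maximum rather than raise it.
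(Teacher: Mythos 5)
Your proof is correct and follows essentially the same route as the paper's: both establish $\Lambda ( \tilde{U} ) \leq \Lambda ( U )$ by combining the termwise bound $w_{ij}\left( U\right) \leq w_{ij}( \tilde{U} )$ from (\ref{W_comparison}) with the containment of the index set $\{ i:\tilde{u}_{ih}=1\} \subseteq \{ i:u_{ih}=1\}$, and then conclude the inclusion using $w_{sj}\left( U\right) \leq w_{sj}( \tilde{U} )$ in the defining inequality (\ref{FishU}). Your explicit final chain of inequalities, and the preliminary remark that $u_{sh}=0$ forces $\tilde{u}_{sh}=0$ so that $\digamma ^{s,h}( \tilde{U};n_{j})$ is well defined, merely spell out details the paper leaves implicit.
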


\begin{proof}
For each $i$, if $\tilde{u}_{ih}=1$, then $u_{ih}=1$ as $U\geq \tilde{U}$.
By (\ref{W_comparison}), we have $w_{ij}\left( U\right) \leq w_{ij}\left(
\tilde{U}\right) $. Thus,
\begin{eqnarray*}
\Lambda \left( \tilde{U}\right) &=&\max \left\{ \frac{\rho _{\alpha }\left(
n_{h}+n_{j},n_{j}\right) }{w_{ij}\left( \tilde{U}\right) }\left\Vert
y_{h}-x_{i}\right\Vert :\text{for }i\text{ with }\tilde{u}_{ih}=1\right\} \\
&\leq &\max \left\{ \frac{\rho _{\alpha }\left( n_{h}+n_{j},n_{j}\right) }{%
w_{ij}\left( U\right) }\left\Vert y_{h}-x_{i}\right\Vert :\text{for }i\text{
with }\tilde{u}_{ih}=1\right\} \\
&\leq &\max \left\{ \frac{\rho _{\alpha }\left( n_{h}+n_{j},n_{j}\right) }{%
w_{ij}\left( U\right) }\left\Vert y_{h}-x_{i}\right\Vert :\text{for }i\text{
with }u_{ih}=1\right\} =\Lambda \left( U\right) .
\end{eqnarray*}%
Consequently, (\ref{FishU_comparison}) follows from (\ref{FishU}).
\end{proof}

As a result, by (\ref{U_inclusion}),\
\begin{equation*}
\digamma ^{s,h}\left( U_{0};n_{j}\right) \subseteq \digamma ^{s,h}\left(
U;n_{j}\right) \subseteq \digamma ^{s,h}\left( U_{S};n_{j}\right) =\digamma
_{S}^{s,h}\left( n_{j}\right) ,
\end{equation*}%
where $\digamma _{S}^{s,h}\left( n_{j}\right) $ is given in (\ref%
{min_far_neigh}). The following proposition and its associated corollary
follow from Theorem \ref{Theorem_neighborhood}.

\begin{proposition}
Suppose $U$ is a state matrix for an optimal assignment map $S\in Map\left[
\ell ,k\right] $. If $y_{j}\in \digamma ^{s,h}\left( U;n_{j}\right) $ for
some $h\neq j$ with $u_{sh}=0$ and $n_{j}\leq n_{h}$, then $S\left( j\right)
\neq s$.
\end{proposition}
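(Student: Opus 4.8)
The plan is to reduce the statement directly to the first part of Theorem \ref{Theorem_neighborhood}, assembling it from two facts already established: the defining property of a state matrix, and the inclusion $\digamma^{s,h}(U;n_j)\subseteq\digamma^{s,h}(U_S;n_j)=\digamma_S^{s,h}(n_j)$ recorded just before the proposition.

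First I would exploit the hypothesis $u_{sh}=0$. Since $U$ is a state matrix for the optimal assignment map $S$, the definition of a state matrix forces $S(h)\neq s$. Setting $s^{\ast}:=S(h)$, we have $s^{\ast}\neq s$, which is exactly the trigger condition in the first scenario of Theorem \ref{Theorem_neighborhood}. Next I would transfer the membership hypothesis: starting from $y_j\in\digamma^{s,h}(U;n_j)$ and applying the inclusion above, we obtain $y_j\in\digamma_S^{s,h}(n_j)$. At this point the three hypotheses of the first part of Theorem \ref{Theorem_neighborhood}---namely $n_j\leq n_h$, $S(h)=s^{\ast}\neq s$, and $y_j\in\digamma_S^{s,h}(n_j)$---are all in place, so that theorem immediately delivers $S(j)\neq s$.

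There is no substantive obstacle in this argument; the real work was carried out earlier, in constructing the sets $\digamma^{s,h}(U;n_j)$ so as to depend only on the partial information carried by $U$, and in the monotonicity lemma (\ref{FishU_comparison}) that produced the required inclusion. The one point meriting care is verifying that the hypotheses line up exactly---that $u_{sh}=0$ is indeed what yields $S(h)\neq s$ via the state-matrix definition, and that the inclusion lands $y_j$ in $\digamma_S^{s,h}(n_j)$ (the $U_S$ version) rather than merely in the intermediate set $\digamma^{s,h}(U;n_j)$, since it is the former for which Theorem \ref{Theorem_neighborhood} is stated.
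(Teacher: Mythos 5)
Your proof is correct and matches the paper's intended argument exactly: the paper proves this proposition by the same reduction, invoking the state-matrix definition to get $S(h)=s^{\ast}\neq s$ and the inclusion $\digamma ^{s,h}\left( U;n_{j}\right) \subseteq \digamma ^{s,h}\left( U_{S};n_{j}\right) =\digamma _{S}^{s,h}\left( n_{j}\right)$ (from (\ref{FishU_comparison}) and (\ref{U_inclusion})) before applying the first part of Theorem \ref{Theorem_neighborhood}. Your closing remark about verifying that the inclusion lands in the $U_{S}$ version of the set is exactly the right point of care, and it checks out since $u_{sh}=0$ forces $\left( U_{S}\right) _{sh}=0$ as well.
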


\begin{corollary}
Suppose $U$ is a state matrix for an optimal assignment map $S\in Map\left[
\ell ,k\right] $. Let $\widehat{U}^{\left( 2\right) }=\left( \widehat{u}%
_{sj}^{\left( 2\right) }\right) $ be a $k\times \ell $ matrix with
\begin{equation}
\widehat{u}_{sj}^{\left( 2\right) }=\left\{
\begin{array}{cc}
0, & \text{if }y_{j}\in \digamma ^{s,h}\left( U;n_{j}\right) \text{ for some
}h\neq j\text{ with }u_{sh}=0\text{ and }n_{j}\leq n_{h} \\
u_{sj}, & \text{else}%
\end{array}%
\right. ,  \label{nbhd_U}
\end{equation}%
Then, $\widehat{U}^{\left( 2\right) }$ is also a state matrix for $S$ with $%
U\geq \widehat{U}^{\left( 2\right) }$.
\end{corollary}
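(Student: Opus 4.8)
The plan is to verify the two asserted properties directly from the definition of $\widehat{U}^{\left( 2\right) }$, mirroring the proof of the corollary that follows Proposition \ref{U_1_Prop}. The substantive work has already been done in Theorem \ref{Theorem_neighborhood} and packaged in the preceding proposition; here I only need to translate that implication into the language of state matrices. Concretely, I would first show that $\widehat{U}^{\left( 2\right) }$ is a state matrix for $S$, i.e.\ that $S\left( j\right) \neq s$ whenever $\widehat{u}_{sj}^{\left( 2\right) }=0$, and then check the entrywise inequality $U\geq \widehat{U}^{\left( 2\right) }$.

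For the state-matrix property I would argue by case analysis on \emph{why} an entry vanishes. By construction, $\widehat{u}_{sj}^{\left( 2\right) }=0$ occurs in exactly one of two situations: either the updating condition is triggered, namely $y_{j}\in \digamma ^{s,h}\left( U;n_{j}\right) $ for some $h\neq j$ with $u_{sh}=0$ and $n_{j}\leq n_{h}$, or else that condition fails and the retained value $u_{sj}$ already equals $0$. In the first situation the preceding proposition yields $S\left( j\right) \neq s$ at once. In the second situation the hypothesis that $U$ is a state matrix for $S$, together with $u_{sj}=0$, again gives $S\left( j\right) \neq s$ by the definition of a state matrix. Hence in both cases $S\left( j\right) \neq s$, which is exactly what is required.

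The monotonicity $U\geq \widehat{U}^{\left( 2\right) }$ is then immediate: each $\widehat{u}_{sj}^{\left( 2\right) }$ is either $u_{sj}$ or $0$, and since every entry of $U$ lies in $\left\{ 0,1\right\} $ we have $0\leq u_{sj}$, so $\widehat{u}_{sj}^{\left( 2\right) }\leq u_{sj}$ for all $s$ and $j$. I do not anticipate any genuine obstacle; the only point deserving attention is keeping the two routes to a zero entry separate, so that the triggered case correctly invokes the new proposition while the untriggered case relies on the standing hypothesis on $U$.
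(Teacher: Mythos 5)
Your proof is correct and follows exactly the argument the paper intends: the paper leaves this corollary's proof implicit (stating it ``follows from Theorem \ref{Theorem_neighborhood}'' via the preceding proposition), and your case analysis on why $\widehat{u}_{sj}^{\left(2\right)}=0$ --- triggered condition handled by that proposition, untriggered case handled by $U$ being a state matrix --- mirrors the paper's explicit proof of the analogous corollary for $\widehat{U}^{\left(1\right)}$. The entrywise inequality $U\geq \widehat{U}^{\left(2\right)}$ is verified just as in the paper, so nothing is missing.
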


We now explore the implication of Theorem \ref{autarky} on the search for
optimal assignment maps. Suppose $U$ is a state matrix for an optimal
assignment map $S$. For each $i\in \left\{ 1,\cdots ,k\right\} $, let
\begin{equation*}
\Psi _{i}\left( U\right) =\left\{ y_{j}:u_{ij}=1\right\} \cup \left\{
x_{i}\right\} .
\end{equation*}%
Clearly,
\begin{equation*}
\Psi _{i}\left( U\right) \supseteq \Psi _{i}\left( U_{S}\right) =\Psi _{i},
\end{equation*}%
where $\Psi _{i}$ is defined in (\ref{Psi}).

Now, for $\pi :\mathbb{R}^{m}\rightarrow \mathbb{R}$ given in (\ref%
{pi_projection}), we define
\begin{equation}
R_{i}=\max \left\{ \left\Vert z-p-\pi \left( z\right) v\right\Vert :z\in
\Psi _{i}\left( U\right) \right\} .  \label{R_i_U}
\end{equation}%
Without loss of generality, we may assume that
\begin{equation*}
\Psi _{i}\left( U\right) =\left\{ y_{j_{h}}:h=1,\cdots ,N_{i}\right\} \cup
\left\{ x_{i}\right\}
\end{equation*}%
with
\begin{equation}
\pi \left( y_{j_{1}}\right) \leq \pi \left( y_{j_{2}}\right) \leq \cdots
\leq \pi \left( y_{j_{N_{i}}}\right) .  \label{order_pi_i}
\end{equation}

\begin{proposition}
Suppose $U=\left( u_{sj}\right) $ is a state matrix for an optimal
assignment map $S\in Map\left[ \ell ,k\right] $. For each $i\in \left\{
1,\cdots ,k\right\} $, let $h\in \left\{ 1,\cdots ,N_{i}\right\} $ and $%
i^{\ast }\in \left\{ 1,\cdots ,k\right\} $. If
\begin{equation}
\min \left\{ \pi \left( y_{j_{h+1}}\right) -\pi \left( y_{j_{h}}\right) ,\pi
\left( x_{i}\right) -\pi \left( y_{j_{h}}\right) \right\} >2CR_{i}+|\pi
\left( x_{i^{\ast }}\right) -\pi \left( y_{j_{h}}\right) |,  \label{tunnel1}
\end{equation}%
where $C$ and $R_{i}$ are the constants given in (\ref{ConstantC}) and (\ref%
{R_i_U}) respectively, then $S\left( j_{t}\right) \neq i$ for any $t\leq h$.
Similarly, if
\begin{equation}
\min \left\{ \pi \left( y_{j_{h}}\right) -\pi \left( y_{j_{h-1}}\right) ,\pi
\left( y_{j_{h}}\right) -\pi \left( x_{i}\right) \right\} >2CR_{i}+|\pi
\left( x_{i^{\ast }}\right) -\pi \left( y_{j_{h}}\right) |,  \label{tunnel2}
\end{equation}%
then $S\left( j_{t}\right) \neq i$ for any $t\geq h$.
\end{proposition}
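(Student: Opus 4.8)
The plan is to argue by contradiction and to re-run, almost verbatim, the decomposition-and-rerouting argument in the proof of Theorem~\ref{autarky}, but with the reference projection level fixed at $\pi\left( y_{j_h}\right)$ instead of at an individual assigned household. The crucial input is the containment $\Psi _i\subseteq \Psi _i\left( U\right) $, which holds because $U$ is a state matrix for $S$: if $S\left( m\right) =i$ then $u_{im}=1$, so every household truly assigned to $i$ appears among the listed points $y_{j_1},\dots ,y_{j_{N_i}}$. Since $y_{j_h}$ and $y_{j_{h+1}}$ are consecutive in the ordering (\ref{order_pi_i}), no point of $\Psi _i\left( U\right) $, hence no household assigned to $i$, has projection strictly inside the open gap $\left( \pi \left( y_{j_h}\right) ,\pi \left( y_{j_{h+1}}\right) \right) $. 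A direct black-box appeal to Theorem~\ref{autarky} applied to the topmost assigned household below the gap is not quite enough, because its closeness bound is centered at that household's level rather than at $\pi \left( y_{j_h}\right) $, leaving a mismatch in the $|\pi \left( x_{i^{\ast }}\right) -\pi \left( \cdot \right) |$ term; re-running the argument pinned at $\pi \left( y_{j_h}\right) $ removes this mismatch.

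For the first assertion I assume $S\left( j_t\right) =i$ for some $t\le h$; note (\ref{tunnel1}) forces $\pi \left( y_{j_h}\right) <\pi \left( x_i\right) $. By (\ref{tunnel1}) the interval $\bigl( \pi \left( y_{j_h}\right) +2CR_i+|\pi \left( x_{i^{\ast }}\right) -\pi \left( y_{j_h}\right) |,\ \min \{\pi \left( y_{j_{h+1}}\right) ,\pi \left( x_i\right) \}\bigr) $ is nonempty, so I may fix a level $t_2$ inside it; then $\pi \left( y_{j_h}\right) $ and $t_2$ play exactly the roles of $\pi \left( y_j\right) $ and $t_2$ in the proof of Theorem~\ref{autarky}. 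Splitting $S^{-1}\left( i\right) =\Theta _1\amalg \Theta _2$ with $\Theta _1=\{m\in S^{-1}\left( i\right) :\pi \left( y_m\right) \le \pi \left( y_{j_h}\right) \}$ and $\Theta _2=\{m\in S^{-1}\left( i\right) :\pi \left( y_m\right) \ge \pi \left( y_{j_{h+1}}\right) \}$ yields a clean partition (the gap guarantees nothing lands strictly between), with $\Theta _1\neq \emptyset $ since $j_t\in \Theta _1$. I then reproduce the two sub-cases of Theorem~\ref{autarky}: when $\Theta _2=\emptyset $ I apply Lemma~\ref{Comparison} with $\mathbf{c}=\mathbf{b}_i$, $t_1=\pi \left( y_{j_h}\right) $, $P=x_i$, $Q=x_{i^{\ast }}$; when $\Theta _2\neq \emptyset $ I apply Lemma~\ref{decomposition} to produce the splitting vertex $P$ and then reroute either $\mathbf{\tilde b}_i^{(1)}$ to $x_{i^{\ast }}$ (if $\pi \left( P\right) \ge t_2-CR_i$) or $\mathbf{\tilde b}_i^{(2)}$ to a stopover $Q$ on the path with $\pi \left( Q\right) =t_2$ (otherwise). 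In every case the choice of $t_2$ together with (\ref{tunnel1}) makes the cost difference strictly positive, yielding a path cheaper than $G$ and contradicting the optimality of $G$ furnished by Theorem~\ref{Theorem 1}. Hence $S\left( j_t\right) \neq i$ for all $t\le h$, and the second assertion (\ref{tunnel2}) follows by replacing $v$ with $-v$, which interchanges ``below'' and ``above'' and turns the gap $\left( \pi \left( y_{j_{h-1}}\right) ,\pi \left( y_{j_h}\right) \right) $ into the one just treated.

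The step I expect to be delicate is verifying that the single constant $R_i$ of (\ref{R_i_U}), now a maximum of transversal deviations over $\Psi _i\left( U\right) =\{y_j:u_{ij}=1\}\cup \{x_i\}$, dominates every transversal deviation entering the rerouting estimates. The deviations of the assigned households and of $x_i$ are controlled because those points lie in $\Psi _i\left( U\right) $, and the splitting vertex $P$ is controlled because an optimal single-source path stays inside the convex hull of its source and targets; the genuinely delicate point is the reroute target $x_{i^{\ast }}$, whose transversal deviation enters the local constant $R$ of Lemma~\ref{Comparison} through the point $Q=x_{i^{\ast }}$ yet is not a priori bounded by $R_i$ as written in (\ref{R_i_U}). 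Closing this requires either reading $R_i$ as a bound that also covers $x_{i^{\ast }}$ (as the all-factories constant in Theorem~\ref{autarky} did) or absorbing $\left\Vert x_{i^{\ast }}-p-\pi \left( x_{i^{\ast }}\right) v\right\Vert $ into the $2CR_i$ slack that (\ref{tunnel1}) provides; once this perpendicular overhead is absorbed, the strict-inequality bookkeeping is word for word that of Theorem~\ref{autarky}.
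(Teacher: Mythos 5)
Your argument is essentially correct, but it takes a longer route than the paper, and the reason you give for avoiding the shorter one is mistaken. The paper's proof is precisely the black-box appeal you dismiss: it picks $t^{\ast }\leq h$ with $\pi \left( y_{j_{t^{\ast }}}\right) =\max \left\{ \pi \left( y_{j_{t}}\right) :t\leq h,S\left( j_{t}\right) =i\right\} $, applies Theorem \ref{autarky} to obtain $z\in \Psi _{i}\backslash \left\{ y_{j_{t^{\ast }}}\right\} $ with $\pi \left( y_{j_{t^{\ast }}}\right) <\pi \left( z\right) \leq \pi \left( x_{i}\right) $ and $\pi \left( z\right) -\pi \left( y_{j_{t^{\ast }}}\right) \leq 2CR_{i}+|\pi \left( x_{i^{\ast }}\right) -\pi \left( y_{j_{t^{\ast }}}\right) |$, and then removes the level mismatch you describe with a one-line triangle inequality: since $\Psi _{i}\subseteq \Psi _{i}\left( U\right) $ and the ordering (\ref{order_pi_i}) leave no point of $\Psi _{i}$ with projection strictly inside the gap, maximality forces $\pi \left( z\right) \geq \min \left\{ \pi \left( y_{j_{h+1}}\right) ,\pi \left( x_{i}\right) \right\} $, whence by (\ref{tunnel1}) one gets $\pi \left( z\right) -\pi \left( y_{j_{t^{\ast }}}\right) >2CR_{i}+|\pi \left( x_{i^{\ast }}\right) -\pi \left( y_{j_{h}}\right) |+\pi \left( y_{j_{h}}\right) -\pi \left( y_{j_{t^{\ast }}}\right) \geq 2CR_{i}+|\pi \left( x_{i^{\ast }}\right) -\pi \left( y_{j_{t^{\ast }}}\right) |$, contradicting the theorem's bound. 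So the mismatch in the $|\pi \left( x_{i^{\ast }}\right) -\pi \left( \cdot \right) |$ term is absorbed, not fatal. Your re-run of the Lemma \ref{Comparison} and Lemma \ref{decomposition} machinery pinned at $t_{1}=\pi \left( y_{j_{h}}\right) $ is nonetheless valid: the gap hypothesis makes your partition $\Theta _{1}\amalg \Theta _{2}$ automatic, both sub-cases go through as in Theorem \ref{autarky}, and the reflection $v\mapsto -v$ correctly yields (\ref{tunnel2}); the citation-plus-triangle-inequality route simply buys a much shorter proof.

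On the delicate point you raise about $R_{i}$: you have found a genuine wrinkle, but it afflicts both routes equally, because it sits in the statement rather than in the method. Theorem \ref{autarky} is proved with the constant (\ref{R_i}), whose defining set contains all of $x_{1},\cdots ,x_{k}$, so the transversal deviation of the reroute target $x_{i^{\ast }}$ is covered there; the present proposition instead invokes (\ref{R_i_U}), where $\Psi _{i}\left( U\right) $ dominates the household part (as you observe, $\Psi _{i}\subseteq \Psi _{i}\left( U\right) $ because $U$ is a state matrix for $S$) but contains no factory other than $x_{i}$. The paper's citation of Theorem \ref{autarky} therefore implicitly needs the deviations of the other factories to be dominated by the (\ref{R_i_U}) constant, exactly as your re-run does. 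Of your two proposed fixes, only the first is robust: enlarge the set in (\ref{R_i_U}) to $\Psi _{i}\left( U\right) \cup \left\{ x_{1},\cdots ,x_{k}\right\} $, after which the splitting vertex $P$ and the stopover $Q$ are harmless by the convex-hull property of optimal single-source paths, as you note. Your second fix, absorbing $\left\Vert x_{i^{\ast }}-p-\pi \left( x_{i^{\ast }}\right) v\right\Vert $ into the slack, fails in general: (\ref{tunnel1}) supplies only $2CR_{i}$ of slack, while that deviation is not controlled by $R_{i}$ at all and can exceed it by an arbitrary amount.
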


\begin{proof}
Assume (\ref{tunnel1}) holds but $S\left( j_{t^{\ast }}\right) =i$ for some $%
t^{\ast }\leq h$. Without loss of generality, we may assume that
\begin{equation*}
\pi \left( y_{j_{t^{\ast }}}\right) =\max \left\{ \pi \left(
y_{j_{t}}\right) :t\leq h,S\left( j_{t}\right) =i\right\} .
\end{equation*}%
Note that $\pi \left( y_{j_{t^{\ast }}}\right) \leq \pi \left(
y_{j_{h}}\right) <\pi \left( x_{i}\right) $ by (\ref{order_pi_i}) and (\ref%
{tunnel1}). Then, by Theorem \ref{autarky}, there exists a $z\in \Psi
_{i}\backslash \left\{ y_{j_{t^{\ast }}}\right\} $ with $\pi \left(
y_{j_{t^{\ast }}}\right) <\pi \left( z\right) \leq \pi \left( x_{i}\right) $
such that
\begin{equation}
0<\pi \left( z\right) -\pi \left( y_{j_{t^{\ast }}}\right) \leq
2CR_{i}+\left\vert \pi \left( x_{i^{\ast }}\right) -\pi \left( y_{j_{t^{\ast
}}}\right) \right\vert .  \label{turnel_t_star}
\end{equation}%
By the maximality of $\pi \left( y_{j_{t^{\ast }}}\right) $ and $z\in \Psi
_{i}\backslash \left\{ y_{j_{t^{\ast }}}\right\} $, we know $\pi \left(
y_{j_{h}}\right) <\pi \left( z\right) $. Thus, by the ordering in (\ref%
{order_pi_i}), we have
\begin{equation}
\min \left\{ \pi \left( y_{j_{h+1}}\right) ,\pi \left( x_{i}\right) \right\}
\leq \pi \left( z\right) \text{.}  \label{comparison_pi_z}
\end{equation}%
Therefore,
\begin{eqnarray*}
&&\pi \left( z\right) -\pi \left( y_{j_{t^{\ast }}}\right) \\
&=&\pi \left( z\right) -\pi \left( y_{j_{h}}\right) +\pi \left(
y_{j_{h}}\right) -\pi \left( y_{j_{t^{\ast }}}\right) \\
&\geq &\min \left\{ \pi \left( y_{j_{h+1}}\right) -\pi \left(
y_{j_{h}}\right) ,\pi \left( x_{i}\right) -\pi \left( y_{j_{h}}\right)
\right\} +\pi \left( y_{j_{h}}\right) -\pi \left( y_{j_{t^{\ast }}}\right)
\text{, by (\ref{comparison_pi_z}) } \\
&>&2CR_{i}+|\pi \left( x_{i^{\ast }}\right) -\pi \left( y_{j_{h}}\right)
|+\pi \left( y_{j_{h}}\right) -\pi \left( y_{j_{t^{\ast }}}\right) \text{,
by (\ref{tunnel1})} \\
&\geq &2CR_{i}+|\pi \left( x_{i^{\ast }}\right) -\pi \left( y_{j_{t^{\ast
}}}\right) |,
\end{eqnarray*}%
a contradiction with (\ref{turnel_t_star}). This proves (\ref{tunnel1}).
Similar arguments give (\ref{tunnel2}).
\end{proof}

For each $i=\left\{ 1,\cdots ,k\right\} $, and $\lambda \in \mathbb{R}$,
denote
\begin{equation*}
I_{i}\left( \lambda \right) :=\left\{
\begin{array}{ll}
(-\infty ,\lambda ], & \text{if }\lambda \leq \pi \left( x_{i}\right) \\
\lbrack \lambda ,\infty )\text{,} & \text{if }\lambda >\pi \left(
x_{i}\right)%
\end{array}%
\right. .
\end{equation*}%
Then, for each $\pi :\mathbb{R}^{m}\rightarrow \mathbb{R}$ given in (\ref%
{pi_projection}), we define
\begin{equation*}
\tilde{\digamma}_{\pi }^{i}\left( U;n_{j}\right) :=\left\{ z\in \mathbb{R}%
^{m}:\pi \left( z\right) \in I_{i}\left( \pi \left( y_{j_{h}}\right) \right)
\text{ for some }j_{h}\text{ satisfying }(\ref{tunnel1})\text{ or }(\ref%
{tunnel2})\right\} .
\end{equation*}

\begin{corollary}
Suppose $U$ is a state matrix for an optimal assignment map $S\in Map\left[
\ell ,k\right] $. Let $\widehat{U}^{\left( 3\right) }=\left( \widehat{u}%
_{ij}^{\left( 3\right) }\right) $ be a $k\times \ell $ matrix with
\begin{equation}
\widehat{u}_{ij}^{\left( 3\right) }=\left\{
\begin{array}{cc}
0, & \text{if }y_{j}\in \tilde{\digamma}_{\pi }^{i}\left( U;n_{j}\right)
\text{ for some }\pi  \\
u_{ij}, & \text{else}%
\end{array}%
\right. ,  \label{u_ij_3}
\end{equation}%
Then, $\widehat{U}^{\left( 3\right) }$ is also a state matrix for $S$ with $%
U\geq \widehat{U}^{\left( 3\right) }$.
\end{corollary}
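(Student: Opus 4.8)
The plan is to verify directly that $\widehat{U}^{(3)}$ satisfies the defining property of a state matrix for $S$, namely that $\widehat{u}_{ij}^{(3)}=0$ forces $S(j)\neq i$, and to observe that the update only replaces ones by zeros so that $U\geq\widehat{U}^{(3)}$ holds automatically. Inspecting (\ref{u_ij_3}), an entry $\widehat{u}_{ij}^{(3)}$ vanishes in exactly two situations: either we are in the ``else'' branch with $u_{ij}=0$, or we are in the first branch with $y_j\in\tilde{\digamma}_{\pi}^{i}(U;n_j)$ for some projection $\pi$. In the first situation $S(j)\neq i$ is immediate because $U$ is a state matrix for $S$. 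Since in every case $\widehat{u}_{ij}^{(3)}\in\{0,u_{ij}\}$, we always have $\widehat{u}_{ij}^{(3)}\leq u_{ij}$, which gives $U\geq\widehat{U}^{(3)}$ at once. Thus the entire content of the corollary reduces to handling the first branch.

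So suppose $y_j\in\tilde{\digamma}_{\pi}^{i}(U;n_j)$ for some $\pi$. If $u_{ij}=0$ we are again done by the state-matrix hypothesis, so I may assume $u_{ij}=1$, which means $j=j_t$ for some $t\in\{1,\cdots,N_i\}$ in the enumeration of $\Psi_i(U)$ ordered as in (\ref{order_pi_i}). By the definition of $\tilde{\digamma}_{\pi}^{i}(U;n_j)$ there is an index $j_h$ satisfying either (\ref{tunnel1}) or (\ref{tunnel2}) with $\pi(y_j)\in I_i(\pi(y_{j_h}))$. The plan is to translate this projection membership into the index comparison $t\leq h$ (respectively $t\geq h$) needed to invoke the preceding proposition for that same $\pi$, which already yields $S(j_t)\neq i$.

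The key step is that the tunnel conditions carry a strict separation that rules out the awkward tie cases. If (\ref{tunnel1}) holds for $j_h$, then its left-hand side exceeds the nonnegative quantity $2CR_i+|\pi(x_{i^\ast})-\pi(y_{j_h})|$, so in particular $\pi(y_{j_h})<\pi(x_i)$ and (when $h<N_i$) $\pi(y_{j_h})<\pi(y_{j_{h+1}})$ strictly. The former gives $I_i(\pi(y_{j_h}))=(-\infty,\pi(y_{j_h})]$, whence $\pi(y_{j_t})\leq\pi(y_{j_h})$; the latter forbids any $t\geq h+1$ from satisfying $\pi(y_{j_t})\leq\pi(y_{j_h})$, since the ordering would then force $\pi(y_{j_t})\geq\pi(y_{j_{h+1}})>\pi(y_{j_h})$. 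Hence $t\leq h$, and the proposition gives $S(j)=S(j_t)\neq i$. The case (\ref{tunnel2}) is entirely symmetric: there $\pi(y_{j_h})>\pi(x_i)$ gives $I_i(\pi(y_{j_h}))=[\pi(y_{j_h}),\infty)$, and the strict gap $\pi(y_{j_{h-1}})<\pi(y_{j_h})$ forces $t\geq h$, so again $S(j)\neq i$. Combining the two branches, $\widehat{u}_{ij}^{(3)}=0$ always implies $S(j)\neq i$, so $\widehat{U}^{(3)}$ is a state matrix for $S$. The only genuinely delicate point—where I expect to spend the most care—is precisely this tie-breaking argument, ensuring that membership of $\pi(y_j)$ in the half-line $I_i(\pi(y_{j_h}))$ is compatible with the strict monotonicity extracted from the tunnel inequality, so that the index range covered by the preceding proposition applies to our particular $t$.
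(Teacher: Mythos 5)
Your proof is correct and follows the same route the paper intends (and uses explicitly for the analogous corollary on $\widehat{U}^{\left( 1\right) }$): an entry $\widehat{u}_{ij}^{\left( 3\right) }=0$ arises either from $u_{ij}=0$, handled by the state-matrix hypothesis, or from $y_{j}\in \tilde{\digamma}_{\pi }^{i}\left( U;n_{j}\right) $, handled by the preceding proposition, while $U\geq \widehat{U}^{\left( 3\right) }$ is immediate since entries only decrease. Your tie-breaking step---using the strict positivity of the gaps in (\ref{tunnel1}) and (\ref{tunnel2}) to convert $\pi \left( y_{j}\right) \in I_{i}\left( \pi \left( y_{j_{h}}\right) \right) $ into the index bounds $t\leq h$ or $t\geq h$---correctly fills in a detail the paper leaves implicit.
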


\begin{remark}
Depending on spatial locations of households and factories, for each fixed $%
i\in \left\{ 1,\cdots ,k\right\} $, the planner may choose $\pi $ to be one
of the standard coordinate functions in $\mathbb{R}^{m}$, i.e. $\pi \left(
z_{1},\cdots ,z_{m}\right) =z_{t}$ for some fixed $1\leq t\leq m$. In this
case, (\ref{tunnel1}) and (\ref{tunnel2}) may be simply expressed in terms
of coordinates of $x_{i}$'s and $y_{j}$'s. Another reasonable choice is to
set $\pi \left( z\right) =\left\langle z-p_{i},v_{i}\right\rangle $, where
\begin{equation*}
\left( p_{i},v_{i}\right) \in \arg \min \left\{ \max_{z\in \Psi _{i}\left(
U\right) }\left\Vert z-p-\left\langle z-p,v\right\rangle v\right\Vert
:p,v\in \mathbb{R}^{m}\text{ with }\left\Vert v\right\Vert =1\right\} .
\end{equation*}%
This will minimize $R_{i}$ given in (\ref{R_i_U}), because the line passing
through $p_{i}$ in direction $v_{i},$ i.e.
\begin{equation*}
\left\{ p_{i}+tv_{i}:t\in \mathbb{R}\right\} ,
\end{equation*}%
provides the least supremum norm approximation for $\Psi _{i}\left( U\right)
$ in $\mathbb{R}^{m}$.
\end{remark}

Given a state matrix $U$ for an optimal assignment map $S$, we have used
results from previous sections to provide three updated state matrices $\hat{%
U}^{\left( j\right) }$, $j=1,2,3$, for $U$. The next proposition makes it
possible to combine them together into a further updated state matrix.

\begin{proposition}
\label{prop min}Suppose $U=\left( u_{ij}\right) $ and $\bar{U}=\left( \bar{u}%
_{ij}\right) $ are two state matrices for an optimal assignment map $S\in Map%
\left[ \ell ,k\right] $. Then, the matrix $\tilde{U}=\left( \tilde{u}%
_{ij}\right) $ given by%
\begin{equation*}
\tilde{u}_{ij}=\min \left\{ u_{ij},\bar{u}_{ij}\right\} \text{ for all }i%
\text{ and }j
\end{equation*}%
is also a state matrix for $S$.
\end{proposition}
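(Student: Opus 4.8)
The plan is to verify directly that $\tilde{U}$ satisfies the defining property of a state matrix for $S$, namely that $S(j)\neq i$ whenever $\tilde{u}_{ij}=0$. Before doing so, I would first check that $\tilde{U}$ is a legitimate object of the required type: since $u_{ij},\bar{u}_{ij}\in\{0,1\}$ for every pair $(i,j)$, their minimum $\tilde{u}_{ij}=\min\{u_{ij},\bar{u}_{ij}\}$ again lies in $\{0,1\}$, so $\tilde{U}$ is a genuine $k\times\ell$ matrix with entries in $\{0,1\}$, as demanded by the definition of state matrix.

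The key observation is purely combinatorial: for $\{0,1\}$-valued quantities one has $\min\{u_{ij},\bar{u}_{ij}\}=0$ if and only if $u_{ij}=0$ or $\bar{u}_{ij}=0$. Hence I would fix an arbitrary pair $(i,j)$ with $\tilde{u}_{ij}=0$ and split into two cases according to which of the two original entries vanishes. If $u_{ij}=0$, then because $U$ is a state matrix for $S$, the definition immediately yields $S(j)\neq i$. If instead $\bar{u}_{ij}=0$, the same definition applied to $\bar{U}$ yields $S(j)\neq i$. In either case we obtain $S(j)\neq i$, which is precisely the condition needed for $\tilde{U}$ to be a state matrix for $S$. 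Since $(i,j)$ was arbitrary among the zero entries of $\tilde{U}$, this completes the verification.

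I do not expect any genuine obstacle in this argument; the statement is an immediate consequence of the defining property of a state matrix together with the elementary behavior of the minimum on binary entries. The only point requiring a moment's care is confirming that the entrywise minimum preserves the $\{0,1\}$ constraint, which is automatic, and that passing to a \emph{smaller} entry can only turn more entries into zeros without creating any spurious constraint, so no previously valid exclusion is lost. I would also note in passing that the same reasoning extends by a trivial induction to the minimum of any finite collection of state matrices for $S$, which is the form in which this proposition is used to combine the updates $\widehat{U}^{(1)}$, $\widehat{U}^{(2)}$, and $\widehat{U}^{(3)}$.
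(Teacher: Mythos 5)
Your proof is correct and follows essentially the same argument as the paper: since $\tilde{u}_{ij}=0$ forces $u_{ij}=0$ or $\bar{u}_{ij}=0$, the defining property of a state matrix applied to $U$ or $\bar{U}$ gives $S(j)\neq i$ in either case. Your added checks (that the entrywise minimum preserves $\{0,1\}$-valuedness, and the remark on extending to finitely many state matrices, which is exactly how the paper uses the result in Corollary~\ref{U updating}) are harmless elaborations of the paper's two-line proof.
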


\begin{proof}
If $\tilde{u}_{ij}=0$, then either $u_{ij}=0$ or $\tilde{u}_{ij}=0$. Both
cases give $S\left( j\right) \neq i$.
\end{proof}

Proposition \ref{prop min} says that one could deduce more information from
any two existing state matrices regarding the optimal assignment map. Using
this proposition, we immediately have the following corollary.

\begin{corollary}
\label{U updating}Suppose $U$ is a state matrix for an optimal assignment
map $S\in Map\left[ \ell ,k\right] $. For each $i$ and $j$, define
\begin{equation*}
\widehat{u}_{ij}=\min \left\{ \widehat{u}_{ij}^{\left( 1\right) },\widehat{u}%
_{ij}^{\left( 2\right) },\widehat{u}_{ij}^{\left( 3\right) }\right\} ,
\end{equation*}%
where $\widehat{u}_{ij}^{\left( 1\right) }$, $\widehat{u}_{ij}^{\left(
2\right) }$ and $\widehat{u}_{ij}^{\left( 3\right) }$ are given in (\ref%
{u_ij_1}), (\ref{nbhd_U}) and (\ref{u_ij_3}) respectively. Then, $\widehat{U}%
=\left( \widehat{u}_{ij}\right) $ is also a state matrix for $S$ with $U\geq
\widehat{U}$.
\end{corollary}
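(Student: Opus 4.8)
The plan is to read this corollary as a pure bookkeeping consequence of the three preceding corollaries together with Proposition~\ref{prop min}, so almost all of the work has already been done. First I would recall that each of the three matrices $\widehat{U}^{\left( 1\right) }$, $\widehat{U}^{\left( 2\right) }$, $\widehat{U}^{\left( 3\right) }$, defined entrywise by (\ref{u_ij_1}), (\ref{nbhd_U}) and (\ref{u_ij_3}), has already been shown in the three corollaries immediately above to be a state matrix for the same optimal assignment map $S$, and that each satisfies $U\geq \widehat{U}^{\left( m\right) }$ for $m=1,2,3$. Thus the three building blocks are in hand, and what remains is only to combine them.

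Next I would invoke Proposition~\ref{prop min}, which asserts that the entrywise minimum of two state matrices for $S$ is again a state matrix for $S$. Since that proposition is stated for a pair of matrices while we are combining three, I would apply it twice, using associativity of $\min$. Concretely, set $V=\left( v_{ij}\right) $ with $v_{ij}=\min \left\{ \widehat{u}_{ij}^{\left( 1\right) },\widehat{u}_{ij}^{\left( 2\right) }\right\} $; by Proposition~\ref{prop min} applied to $\widehat{U}^{\left( 1\right) }$ and $\widehat{U}^{\left( 2\right) }$, the matrix $V$ is a state matrix for $S$. Applying Proposition~\ref{prop min} once more to $V$ and $\widehat{U}^{\left( 3\right) }$ shows that
\begin{equation*}
\widehat{u}_{ij}=\min \left\{ v_{ij},\widehat{u}_{ij}^{\left( 3\right) }\right\} =\min \left\{ \widehat{u}_{ij}^{\left( 1\right) },\widehat{u}_{ij}^{\left( 2\right) },\widehat{u}_{ij}^{\left( 3\right) }\right\}
\end{equation*}
defines a state matrix $\widehat{U}$ for $S$, which is exactly the asserted matrix.

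Finally I would verify the inequality $U\geq \widehat{U}$, which is immediate: for every $i,j$ we have $\widehat{u}_{ij}\leq \widehat{u}_{ij}^{\left( 1\right) }\leq u_{ij}$, the last inequality coming from $U\geq \widehat{U}^{\left( 1\right) }$, so $\widehat{U}\leq U$ entrywise. I do not anticipate any genuine obstacle here; the entire content has been front-loaded into the three corollaries and Proposition~\ref{prop min}, and the only point worth stating explicitly is that $\min$ is associative, which legitimizes passing from the two-matrix statement of Proposition~\ref{prop min} to the three-matrix combination. If one wanted the corollary for arbitrarily many updating operations, the same iterated-$\min$ argument would extend verbatim by induction.
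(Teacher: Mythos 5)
Your proposal is correct and matches the paper's argument: the paper derives this corollary immediately from Proposition \ref{prop min} applied to the three state matrices $\widehat{U}^{\left( 1\right) }$, $\widehat{U}^{\left( 2\right) }$, $\widehat{U}^{\left( 3\right) }$ established in the preceding corollaries, exactly as you do. Your explicit two-step application of the proposition via associativity of $\min$, and the entrywise check that $\widehat{u}_{ij}\leq \widehat{u}_{ij}^{\left( 1\right) }\leq u_{ij}$, simply spell out details the paper leaves implicit.
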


This idea of updating a state matrix $U$ into another state matrix $\widehat{%
U}$ as in Corollary \ref{U updating} can be implemented iteratively to
obtain an even further updated state matrix. Given any initial state matrix $%
U$ (e.g. $U=U^{\left( 0\right) }$ as in (\ref{U_0})) for an optimal
assignment map $S$. For each $n=0,1,2,\cdots ,$ define
\begin{equation*}
U_{n+1}=\widehat{U_{n}}\text{ with }U_{0}=U\text{.}
\end{equation*}%
This gives a non-increasing sequence of $k\times \ell $ matrices $\left\{
U_{n}\right\} $ whose entries are either $0$ or $1$. Hence, there exists an $%
N\geq 1$ such that
\begin{equation*}
U_{0}\gneq U_{1}\gneq \cdots \gneq U_{N-1}=U_{N}=U_{N+1}=\cdots .
\end{equation*}%
We denote this $U_{N}$ as $U^{\ast }$. Clearly, the matrix $U^{\ast }$ is
still a state matrix for $S$ with $U\geq U^{\ast }$ and $\widehat{U^{\ast }}%
=U^{\ast }$.

This updated state matrix $U^{\ast }$ contains more information about $S$
than the initial state matrix $U_{0}$ because $U^{\ast }$ contains more zero
entries. In some non-trivial cases as illustrated in the following example, $%
U^{\ast }$ may have exactly one non-zero entry in each column. In such a
situation, $U^{\ast }$ completely determines the optimal assignment map $S$.

\begin{example}
Let $U$ be a uniform state matrix (e.g. $U=U^{\left( 0\right) }$ as in (\ref%
{U_0})), and suppose that
\begin{equation*}
n_{1}\geq n_{2}\geq \cdots \geq n_{\ell }.
\end{equation*}%
If for each $j=1,\cdots ,\ell $,%
\begin{equation}
y_{j}\in \bigcap_{\substack{ u_{sj}=1  \\ s\neq s_{j}}}\left( \digamma
^{s}\left( U;n_{j}\right) \cup \bigcup_{\substack{ 1\leq h\leq j-1  \\ %
s_{h}\neq s}}\digamma ^{s,h}\left( U;n_{j}\right) \right)
\label{assumption_example}
\end{equation}%
for some $s_{j}\in \left\{ 1,\cdots ,k\right\} $, then $S:\left\{ 1,\cdots
,\ell \right\} \rightarrow \left\{ 1,\cdots ,k\right\} $ given by $S\left(
j\right) =s_{j}$ is the optimal assignment map.
\end{example}

\begin{proof}
It is sufficient to show that for any optimal assignment map $S$, it holds
that $S\left( j\right) =s_{j}$ for any $j$. Indeed, for any $s\neq s_{j}$,
if $u_{sj}=0$, then $S\left( j\right) \neq s$ because $U$ is a state matrix
for $S$. If $u_{sj}=1$, then by assumption (\ref{assumption_example}),
either $y_{j}\in \digamma ^{s}\left( U;n_{j}\right) $ or $\digamma
^{s,h}\left( U;n_{j}\right) $ for some $h<j$ with $s_{h}\neq s$. If $%
y_{j}\in \digamma ^{s}\left( U;n_{j}\right) $, then by Proposition \ref%
{U_1_Prop}, $S\left( j\right) \neq s$. If $y_{j}\in \digamma ^{s,h}\left(
U;n_{j}\right) $ for some $h<j$ with $s_{h}\neq s$, then either $S\left(
h\right) \neq s_{h}$ or $S\left( h\right) =s_{h}\neq s$. In the later case,
since $y_{j}\in \digamma ^{s,h}\left( U;n_{j}\right) \subseteq \digamma
_{S}^{s,h}\left( n_{j}\right) $ and $n_{j}\leq n_{h}$, by Theorem \ref%
{Theorem_neighborhood}, we still have $S\left( j\right) \neq s$. Thus, in
all cases for any $s\neq s_{j}$, we know
\begin{equation}
\text{either }S\left( j\right) \neq s\text{ or }S\left( h\right) \neq s_{h}%
\text{ for some }h<j\text{.}  \label{two_cases}
\end{equation}%
Consequently, when $j=1$, we always have $S\left( 1\right) \neq s$ for any $%
s\neq s_{1}$, and thus $S\left( 1\right) =s_{1}$. Using (\ref{two_cases})
again, we get $S\left( 2\right) \neq s$ for any $s\neq s_{2}$, which yields $%
S\left( 2\right) =s_{2}$. Repeating this process leads to the conclusion
that $S\left( j\right) =s_{j}\,$\ for any $j\in \left\{ 1,\cdots ,\ell
\right\} .$
\end{proof}

\section{Conclusion}

This paper proposes an optimal allocation problem with ramified transport
technology in a spatial economy. A planner needs to find an optimal
allocation plan as well as an associated optimal allocation path to minimize
overall cost of transporting commodity from factories to households. This
problem differentiates itself from existing ramified transportation
literature in that the distribution of production among factories is not
fixed but endogenously determined as in many allocation practices. It's
shown that due to the transport economy of scale in ramified transportation,
each optimal allocation plan corresponds equivalently to an optimal
assignment map from households to factories. This optimal assignment map
provides a natural partition of both households and allocation paths. We
develop methods of marginal transportation analysis and projectional
analysis to study properties of optimal assignment maps. These properties
are then related to the search for an optimal assignment map in the context
of state matrix.

The ramified optimal allocation problem studied in this paper provides a
prototype for a class of problems arising in spatial resource allocations.
One natural extension is to allow the locations of factories $\left\{
x_{1},x_{2},\cdots ,x_{k}\right\} $ to vary which then gives rise to an
optimal location problem. An analogous optimal location problem in
Monge-Kantorovich transportation has been extensively studied as in McAsey
and Mou \cite{mou1}, Morgan and Bolton \cite{morgan} and references therein.
Meanwhile, one may consider another extension of the ramified allocation
problem by generalizing the atomic measure $\mathbf{b}$ of households to an
arbitrary probability measure $\mu $, not necessarily atomic. In particular,
when $\mu $ represents the Lebesgue measure on a domain, a partition of $\mu
$ given by an optimal assignment map may analogously lead to a partition of
the domain. This consequently gives rise to an optimal partition problem of
dividing the given domain into $k$ regions according to ramified optimal
transportation.

%\bibitem{Ambrosio} Ambrosio, L.: Lecture notes on optimal transport
%problems. Mathematical aspects of evolving interfaces (Funchal, 2000),
%1--52, \textit{Lecture Notes in Math.}, 1812, Springer, Berlin, 2003.
%
%\bibitem{beckman} Beckman, M.: \textit{Lectures on location theory},
%Springer-Verlag, Berlin, 1999.

\end{document}